\newcommand{\eps}{\varepsilon}
\newcommand{\mc}[1]{\mathcal{#1}}
\newcommand{\mf}[1]{\mathfrak{#1}}
\newcommand{\bb}[1]{\mathbb{#1}}
\newcommand{\brm}[1]{\operatorname{#1}}
\theoremstyle{definition}
\theoremstyle{plain}
\newtheorem{thm}{Theorem}[section]
\newtheorem{lem}[thm]{Lemma}
\newtheorem{cor}[thm]{Corollary}
\newtheorem{obs}[thm]{Observation}
\newcommand{\h}{h}
\newcommand{\Forb}{\operatorname{Forb}}
\newcommand{\wpn}{\chi_c}
\newcommand{\ext}{\operatorname{ext}}
\newcommand{\crit}{\operatorname{dang}}
\newcommand{\dang}{\operatorname{dang}}
\newcommand{\red}{\operatorname{red}}
\title{Typical structure of hereditary graph families. I. Apex-free families}
\author{
Sergey Norin\thanks{Department of Mathematics and Statistics, McGill University. Email: {\tt sergey.norin@mcgill.ca}. Supported by an NSERC Discovery grant.}\and \and
Yelena Yuditsky\thanks{ Department of Mathematics, Ben-Gurion University of the Negev. Email: {\tt yuditskyl@gmail.com}.}}
\begin{document}

\maketitle

\begin{abstract} A family of graphs $\mc{F}$ is \emph{hereditary} if $\mc{F}$ is closed under isomorphism and taking induced subgraphs. The \emph{speed} of $\mc{F}$ is the sequence $\{|\mc{F}^n|\}_{n \in \bb{N}}$, where $\mc{F}^n$ denotes the set of graphs in $\mc{F}$ with the vertex set $[n]$.
Alon, Balogh, Bollob\'{a}s and Morris~\cite{ABBM11} gave a rough description of typical graphs in a hereditary family and used it to show for every proper hereditary family $\mc{F}$ there exist  $\eps>0$ and an integer $l \geq 1$ such that  $$|\mc{F}^n| =  2^{(1-1/l)n^2/2+o(n^{2-\eps})}.$$ 
The main result of this paper gives a more precise description of typical structure for a restricted class of hereditary families. As a consequence we characterize hereditary families with the speed just above the threshold $2^{(1-1/l)n^2/2}$, generalizing a result of Balogh and Butterfield~\cite{BB11}.  
	
\end{abstract}

\section{Introduction}

Let $\mc{F}$ be a  family of graphs. We say that $\mc{F}$ is \emph{hereditary} if $\mc{F}$ is closed under isomorphism and taking induced subgraphs. If $\mc{F}$ is additionally closed under taking arbitrary subgraphs, then we say that $\mc{F}$ is \emph{monotone}.

Let $[n]=\{1,2,\ldots,n\}$.
For a family of graphs $\mc{F}$, let $\mc{F}^n$ denote the set of graphs $G \in \mc{F}$ with $|V(G)|=[n]$. The \emph{speed} of $\mc{F}$ is a sequence $\{|\mc{F}^n|\}_{n \in \bb{N}}$, naturally measuring the size of  $\mc{F}$. The speed of monotone and hereditary families of graphs has been extensively studied since it has been first considered by Erd\H{o}s, Kleitman and Rotschild~\cite{EKR76}.

The key role in determining the speed of a hereditary family is played by its coloring number, which we now define.
Given integers $s,t \geq 0$, let $\mc{H}(s,t)$ denote the family of all graphs $G$ such that $V(G)$ admits a partition $(X_1,\ldots,X_s,Y_1,\ldots,Y_t)$, where $X_i$ is an independent set in $G$ for every $i \in [s]$, and $Y_i$ is a clique in $G$ for every $i \in[t]$. (Thus $\mc{H}(s,0)$ is a family of all $s$-colorable graphs. We write $\mc{H}(s)$ instead of  $\mc{H}(s,0)$, for brevity.) 

The \emph{coloring number} $\chi_c(\mc{F})$ of a graph family $\mc{F}$ is the maximum integer $l$ such that $\mc{H}(s,l-s) \subseteq \mc{F}$ for some $0 \leq s \leq l$. 

Sharpening the results of Alekseev~\cite{Alekseev92}, and Bollob\'{a}s and Thomason~\cite{BolTho95}, Alon,  Balogh, Bollob\'{a}s and Morris~\cite{ABBM11} gave the following, essentially tight characterization of possible speeds of  hereditary graph families.

\begin{thm}[\cite{ABBM11}]\label{t:AlonSpeed}
	Let $\mc{F}$ be a hereditary graph family, and let $l=\chi_c(\mc{F}) \geq 1$.  Then there exists $\eps >0$ such that
	$$2^{\left(1 - \frac{1}{l} \right)n^2/2-o(1)} \leq |\mc{F}^n| \leq 2^{\left(1 - \frac{1}{l} \right)n^2/2+o(n^{2 - \eps})}$$
	for all positive integers $n$.
\end{thm}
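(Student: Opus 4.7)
The lower bound is immediate from the definition of the coloring number. Since $\chi_c(\mc{F}) = l$, there exists $s \in \{0,\ldots,l\}$ with $\mc{H}(s,l-s) \subseteq \mc{F}$. Partition $[n]$ into $l$ blocks of size $\lfloor n/l \rfloor$ or $\lceil n/l \rceil$, designate the first $s$ as independent sets and the rest as cliques, and allow arbitrary edges across distinct blocks; every such graph belongs to $\mc{F}$. This yields at least $2^{\binom{l}{2}\lfloor n/l \rfloor^2} = 2^{(1-1/l)n^2/2 - O(n)}$ labelled graphs, which matches the claimed lower bound.

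For the upper bound my plan follows the Alekseev--Bollob\'as--Thomason route, first establishing the weaker estimate $|\mc{F}^n| \leq 2^{(1-1/l)n^2/2 + o(n^2)}$. Apply Szemer\'edi's regularity lemma to each $G \in \mc{F}^n$ with a small parameter $\eta$, producing a cluster graph of bounded size; its encoding contributes only $2^{o(n^2)}$ choices in total. Classify each cluster pair as \emph{sparse}, \emph{medium}, or \emph{dense} according to its density, and each individual cluster as independent-like, clique-like, or mixed. The central combinatorial step is that, after discarding the $o(n)$ vertices in mixed clusters, the surviving clusters admit a partition into at most $l$ classes, uniformly typed inside each class and with no medium pair across distinct classes; otherwise $(l+1)$ representative vertices from distinct uniformly-typed classes would span an induced member of some $\mc{H}(s,l+1-s)$, contradicting $\chi_c(\mc{F})=l$. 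Summing the number of graphs consistent with a fixed such template then yields the $o(n^2)$ bound.

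Upgrading the error from $o(n^2)$ to $o(n^{2-\eps})$ is the heart of the theorem and is where I expect the real difficulty to lie. Pick $H \notin \mc{F}$ of minimal order witnessing $\mc{H}(s,l+1-s) \not\subseteq \mc{F}$ for some $s$. I would show that if $G \in \mc{F}^n$ has more than $n^{2-\eps}$ ``defects'' relative to its template---edges inside an independent-like block, non-edges inside a clique-like block, or wrong-density edges on a sparse/dense pair---then a random-embedding supersaturation argument based on $H$ produces $\Omega(n^{|V(H)|})$ induced copies of $H$ in $G$, contradicting $G \in \mc{F}$. A separate counting step bounds the specification of the few remaining defects by $2^{o(n^{2-\eps})}$. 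The main obstacle is the supersaturation estimate itself: the standard induced removal lemma only gives tower-type error and hence no polynomial saving, so one must either develop a polynomial-rate supersaturation tailored to the specific obstruction $H$ (as in~\cite{ABBM11}) or bypass this through the hypergraph container method. Once such a supersaturation is available, balancing $\eps = \eps(l, |V(H)|)$ against the regularity parameter $\eta$ completes the proof.
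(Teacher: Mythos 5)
Your lower bound is fine and coincides with the paper's (\cref{l:lower}): take the partition into $l$ nearly equal blocks witnessing $\mc{H}(s,l-s)\subseteq\mc{F}$ and count the free cross-block pairs. The upper bound, however, has a genuine gap. This paper does not reprove the estimate from scratch; it observes that the theorem follows by combining two results quoted from~\cite{ABBM11}: the structure theorem (\cref{t:Alon}), which says almost every $G\in\mc{F}$ admits a $(\mc{T},l,|V(G)|^{1-\eps})$-approximation with $\mc{T}$ thin, and the characterization of thin families (\cref{l:Jfree}), which says a thin family excludes some induced bigraph $J$ and hence has speed at most $2^{n^{2-\eps}}$; a short counting argument (in the spirit of \cref{c:approx1}) then gives $|\mc{F}^n|\le 2^{(1-1/l)n^2/2+O(n^{2-\eps'})}$. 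Your proposal instead reproves the classical $o(n^2)$ bound by regularity (fine, but that is the Alekseev--Bollob\'as--Thomason statement, not this one) and then, for the polynomial saving which is the entire content of the theorem, you only describe a plan and explicitly defer the key supersaturation step to ``as in~\cite{ABBM11}'' or to containers. Leaving that step unproved means the hard part of the statement is not established.

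Moreover, the mechanism you propose for the saving is not sound for hereditary families. You count ``defects'' of $G$ relative to a template whose blocks are independent-like or clique-like and claim that $n^{2-\eps}$ defects force $\Omega(n^{|V(H)|})$ induced copies of a fixed $H\notin\mc{F}$. But in the hereditary setting the blocks of the correct structure are only required to induce graphs in a \emph{thin} family, and thin families (those excluding an induced bigraph $J$) can contain graphs at edit distance far exceeding $n^{2-\eps}$ from both the complete and the empty graph, with speed as large as $2^{n^{2-\delta}}$ for small $\delta$. For such $\mc{F}$ (e.g.\ $\mc{F}=\mc{P}(\mc{T},l)$ with $\mc{T}$ a thin family of this kind) a typical member has far more than $n^{2-\eps}$ of your ``defects'' yet contains no induced copy of any $H\notin\mc{F}$, so the claimed implication fails. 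This defect-plus-supersaturation picture is appropriate for \emph{monotone} families (Balogh--Bollob\'as--Simonovits), where blocks really are nearly independent; in the hereditary case the polynomial saving must come from counting $J$-free graphs inside the blocks (a K\H{o}v\'ari--S\'os--Tur\'an-type bound, i.e.\ \cref{l:Jfree}), together with the structure theorem \cref{t:Alon}, which is exactly the route the paper takes.
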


We say that a hereditary family $\mc{F}$ is \emph{thin} if $\chi_c(\mc{F}) \leq 1$. 
Let $\h(\mc{F},n)=\log(|\mc{F}^n|)$.\footnote{All logarithms in this paper are base $2$.}  By \cref{t:AlonSpeed} we have $\h(\mc{F},n) = O(n^{2 - \eps})$ for every thin $\mc{F}$.
 Schneiderman and Zito~\cite{SZ94} have shown that if the speed of a thin hereditary family is subexponential then it is severely constrained. 

\begin{thm}[\cite{SZ94}]\label{t:SZSpeed}
	Let $\mc{F}$ be a hereditary graph family. If  $\h(\mc{F},n)= o(n)$ then there exists a non-negative integer $k$ such that
$$\h(\mc{F},n) = k\log n + O(1).$$
\end{thm}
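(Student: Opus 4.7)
The plan is in two stages: first upgrade $h(\mc{F},n) = o(n)$ to $h(\mc{F},n) = O(\log n)$, and then pin down the leading coefficient as a non-negative integer.

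For the polynomial upper bound, I would argue via forbidden structures. There is a short list of ``minimal'' hereditary families of exponential speed --- the hereditary closures of induced matchings $\{nK_2\}$, their complements, and half-graphs on $[n] \sqcup [n]$ --- each with speed at least $2^{\Omega(n)}$. Since $h(\mc{F},n) = o(n)$, $\mc{F}$ contains none of these in full, so we may fix a finite forbidden induced subgraph from each obstruction family. An iterated Ramsey argument then produces $N = N(\mc{F})$ such that every $G \in \mc{F}$ with $|V(G)| \geq N$ admits a partition $V(G) = V_1 \cup \cdots \cup V_t$ into $t = O(1)$ classes, where each $V_i$ is either a clique or an independent set in $G$ and the bipartite attachment between any two classes is either complete or empty. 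Recall also that \cref{t:AlonSpeed} already gives $\chi_c(\mc{F}) \le 1$, which is the starting point for ruling out the mixed clique/independent structures of $\mc{H}(2)$ and $\mc{H}(1,1)$.

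For the refined count, associate to each such $G$ its \emph{type}: the quotient graph on the $t$ classes, with each vertex marked clique/independent and each edge marked full/empty. Only finitely many types arise. Within a fixed type, a labeled $G \in \mc{F}^n$ is determined by a function $[n] \to [t]$ assigning labels to classes. Naively this is $t^n$, far too many, so subexponentiality forces strong extra constraints: if two classes of the type had distinct ``attachment signatures'' (distinct rows in the full/empty bipartite pattern to the remaining classes), labels could be freely exchanged between them and still yield a graph in $\mc{F}$, producing $2^{\Omega(n)}$ distinct members of $\mc{F}^n$. Hence for each type, at most one class grows with $n$; the remaining classes contribute a bounded ``core'' $S \subseteq V(G)$ of size $O(1)$. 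Every sufficiently large $G \in \mc{F}$ therefore decomposes as a bounded graph on $S$ together with a single homogeneous block of size $n - |S|$ attached to $S$ in a prescribed way. Counting labeled graphs of this form yields, for each template, a contribution of $\Theta(n^k)$, where $k = |S|$ plus a bounded combinatorial term from adjacency choices on $S$; summing over the finitely many templates gives $|\mc{F}^n| = c\, n^{k^\ast} + O(n^{k^\ast - 1})$ for $k^\ast \in \bb{Z}_{\ge 0}$, whence $h(\mc{F},n) = k^\ast \log n + O(1)$.

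The main obstacle is the structural classification: one must identify a set of exponential-speed obstructions rich enough that avoiding them forces the homogeneous decomposition, and then rule out any type in which two classes both grow with $n$ while playing ``genuinely distinct'' roles. The Ramsey step is routine in principle but cumbersome, and the delicate point is distinguishing classes that are interchangeable by a symmetry of the type (which is harmless) from those that are not (which forces exponential speed and is therefore excluded by hypothesis). Once the structure is sharply pinned down, the final counting is elementary and the integer exponent is read off from the combinatorial templates.
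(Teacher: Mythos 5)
Your stage 1 has a genuine gap: the list of ``minimal'' exponential-speed obstructions you fix (matchings, co-matchings, half-graphs) is incomplete, and the structural conclusion you draw from excluding a finite member of each is simply false. The correct obstruction set is the one in \cref{t:BBW}(iii): $\mc{C} \vee \mc{C}$, $\mc{C} \vee \mc{S}$, $\mc{M}$, $\mc{C}^{+}$ and their complements. Concretely, the family $\mc{C}^{+}$ (a clique plus one apex vertex with arbitrary neighborhood) excludes $2K_2$, $\bar{2K_2}=C_4$ and $P_4$ --- finite members of all three of your obstruction families --- yet has speed $2^{\Theta(n)}$; the same is true of the family of threshold graphs, which contains $\mc{C}^{+}$. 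Moreover, threshold graphs defeat your claimed Ramsey output directly: a partition of $V(G)$ into $t$ classes, each a clique or independent set, with every cross-attachment complete or empty, is precisely a partition into at most $t$ twin-classes, and the alternating threshold graph on $n$ vertices has $n$ pairwise distinct neighborhoods, so no such partition with $t=O(1)$ exists. So no iterated Ramsey argument starting only from your three forbidden configurations can produce the homogeneous decomposition; what is missing is exactly the constraint that kills $\mc{C}^{+}$-type behavior, namely that each bounded ``core'' vertex must be complete or anticomplete to the large homogeneous block (the $s$-star structure of \cref{t:BBW}), rather than attached arbitrarily.

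Your stage 2 is sound in spirit (two growing classes with distinct attachment signatures give $\binom{n}{\Omega(n)}$ labeled graphs by heredity and relabeling, contradicting $\h(\mc{F},n)=o(n)$, and the final count over templates is exactly \cref{l:starspeed}), but it presupposes the structure that stage 1 fails to deliver. Note that the paper does not prove this statement at all: it is quoted from~\cite{SZ94}, and in the paper's framework it follows immediately from the star-like structure theorem \cref{t:BBW} (Alekseev, Balogh--Bollob\'as--Weinreich) together with \cref{l:starspeed}. The hard content is precisely that structure theorem, which you defer to a ``routine but cumbersome'' Ramsey step; with the corrected obstruction list your outline essentially becomes the Balogh--Bollob\'as--Weinreich argument, but as written the key structural step does not go through.
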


The proof of \cref{t:AlonSpeed} is based on a description of the typical structure of graphs in hereditary families, given ins~\cite{ABBM11}. We  present this description in Section~\ref{s:ABBM} and use as our main tool. 
The main result of this paper is \cref{t:critical}, which gives a much more precise description of this structure at the cost of imposing several technical restrictions on the hereditary family. As one of the corollaries of \cref{t:critical} we prove the following extension of \cref{t:SZSpeed} to general hereditary families. 

\begin{thm}\label{t:Speed}
 Let $\mc{F}$ be a hereditary graph family, and let $l=\chi_c(\mc{F})$. If  $\h(\mc{F},n)-\h(\mc{H}(l),n)= o(n)$ then there exists a non-negative integer $k$ such that
 $$\h(\mc{F},n) =\h(\mc{H}(l),n) + k\log n + O(1).$$
\end{thm}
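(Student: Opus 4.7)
My plan is to reduce \cref{t:Speed} to \cref{t:SZSpeed} via the typical structure theorem \cref{t:critical}. Fix $s$ with $0 \leq s \leq l$ and $\mc{H}(s, l-s) \subseteq \mc{F}$. I expect \cref{t:critical} to associate to each graph $G \in \mc{F}^n$ an essentially canonical partition $V(G) = V_1 \cup \cdots \cup V_l$ such that cross-edges are unconstrained, the first $s$ parts are ``almost independent'', the last $l-s$ parts are ``almost cliques'', and the deviation inside each $V_i$ is described by an auxiliary hereditary family $\mc{F}_i$. Because $\chi_c(\mc{F})=l$, each $\mc{F}_i$ must be thin, i.e.\ $\chi_c(\mc{F}_i) \leq 1$; otherwise any witness to $\chi_c(\mc{F}_i) \geq 2$ inside $V_i$, combined with the cliques and independent sets in the remaining $l-1$ parts, would realize some $\mc{H}(s', l+1-s') \subseteq \mc{F}$, contradicting $\chi_c(\mc{F})=l$. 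The hypothesis $\h(\mc{F},n) - \h(\mc{H}(l),n) = o(n)$ should also place $\mc{F}$ in the restricted class to which \cref{t:critical} applies.

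Next I would derive matching lower and upper bounds. Any labeled $l$-partition of $[n]$ with parts of sizes $n_i \in \{\lfloor n/l \rfloor, \lceil n/l \rceil\}$, any choice of cross-edges, and any choice of graphs $H_i \in \mc{F}_i^{n_i}$ embedded inside the parts produces a distinct element of $\mc{F}^n$, yielding
\begin{equation*}
\h(\mc{F},n) \;\geq\; \h(\mc{H}(l),n) + \sum_{i=1}^l \h\bigl(\mc{F}_i, \lfloor n/l \rfloor\bigr) - O(1).
\end{equation*}
Conversely, since \cref{t:critical} describes \emph{every} $G \in \mc{F}^n$ in this form and the canonical partition is essentially unique, the reverse inequality follows with an additive $O(1)$ slack. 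The hypothesis $\h(\mc{F},n) - \h(\mc{H}(l),n) = o(n)$ then forces $\h(\mc{F}_i, n) = o(n)$ for each $i$. Applying \cref{t:SZSpeed} to each (hereditary, thin) $\mc{F}_i$ gives integers $k_i \geq 0$ with $\h(\mc{F}_i,n) = k_i \log n + O(1)$. Setting $k = \sum_i k_i$ and using $\log \lfloor n/l \rfloor = \log n - O(1)$ concludes $\h(\mc{F},n) = \h(\mc{H}(l),n) + k\log n + O(1)$.

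The main obstacle is to achieve $O(1)$ (rather than $O(\log n)$) precision in the upper bound. This demands that \cref{t:critical} yield a canonical partition up to a bounded number of choices, and that the exceptional vertices and misplaced edges it permits contribute only $O(1)$ to the entropy once we are in the $o(n)$ regime. Bootstrapping the $o(n)$ excess entropy down to an exact $O(1)$ asymptotic is the technical crux; it is precisely the strengthening of \cref{t:AlonSpeed} afforded by \cref{t:critical} that enables this step, and which the bulk of the paper is presumably devoted to establishing.
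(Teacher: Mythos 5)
Your high-level strategy (use the structure theorem to pin down typical graphs as an $l$-part structure with thin inner families, then import the $k\log n+O(1)$ behaviour from the thin regime) is indeed the direction the paper takes, but as written the plan has two genuine gaps. First, you assume that the hypothesis $\h(\mc{F},n)-\h(\mc{H}(l),n)=o(n)$ ``places $\mc{F}$ in the restricted class to which \cref{t:critical} applies.'' That is precisely where the paper does real work: \cref{t:critical} needs $\mc{F}$ apex-free and $\mc{P}(\mc{T},l)\cap\mc{F}$ smooth, and neither is immediate from the $o(n)$ hypothesis. The paper gets there through the chain (ii)$\Rightarrow$(iii)$\Rightarrow$(i) of \cref{t:critical2} (via \cref{l:decomposespeed} and \cref{t:BBW}), which shows $\red(\mc{F})$ is star-like, hence meager, hence $\mc{F}$ is apex-free; smoothness of $\mc{P}(\red(\mc{F}),l)\cap\mc{F}$ then comes from \cref{l:starpartition} together with \cref{l:starext}. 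Without this bootstrapping your appeal to \cref{t:critical} is circular or simply unjustified.

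Second, your sandwich via a single product family is not sound: you need $\mc{P}(\mc{F}_1,\ldots,\mc{F}_l)\subseteq\mc{F}$ for the lower bound (``any choice of cross-edges and any $H_i\in\mc{F}_i$ stays in $\mc{F}$'') and simultaneously that almost all of $\mc{F}$ has this form for the upper bound. Membership in a hereditary family is not decided part-by-part: specific graphs inside one part combined with specific graphs in another part and arbitrary cross edges can leave $\mc{F}$ even when each part individually lies in $\red(\mc{F})$, which is exactly why the paper's definition of $\mc{F}$-reduced graphs only quantifies over one ``free'' part plus cliques and stable sets, and why $\mc{P}(\red(\mc{F}),l)\cap\mc{F}$ can be a proper subfamily of $\mc{P}(\red(\mc{F}),l)$. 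The paper resolves this with the constellation formalism: almost every $G\in\mc{F}$ lies in one of finitely many families $\mc{P}(\mc{J}_i)\subseteq\mc{F}$, where each irreducible $(l,s)$-constellation fixes a star system in every part (this uses the exact ``Moreover'' description in \cref{t:BBW}, not merely the speed statement \cref{t:SZSpeed}), and then $\h(\mc{P}(\mc{J}),n)-\h(\mc{H}(l),n)=|V(J)|\log n+O(1)$ by \cref{l:constellationspeed}, so $k$ arises as the maximum of $|V(J_i)|$ over the finitely many compatible constellations rather than as a sum $\sum_i k_i$ attached to one canonical decomposition. Relatedly, your $O(1)$ precision is obtained in the paper by comparing $\mc{F}$ and $\mc{H}(l)$ partition-by-partition over $\eps$-balanced partitions (so the partition count cancels), using the uniqueness and balancedness statements; the mechanism you gesture at (``bounded number of choices of the canonical partition'') is the right intuition but needs exactly these lemmas to be made to work.
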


\cref{t:SZSpeed} was sharpened by Balogh, Bollob\'{a}s and Weinreich~\cite{BBW00} who gave an exact structural description of thin hereditary families with subexponential speeds. \cref{t:Speed} is derived from  
a similar structural description, given in \cref{t:critical2}, which generalizes a theorem of Balogh and Butterfield~\cite{BB11}. 

In a sequel to this paper~\cite{NorYud192} we give further applications of
Theorems~\ref{t:critical} and~\ref{t:critical2}, where these theorems are used in conjunction with randomized constructions to provide examples of hereditary families with exotic typical behavior. These families provide an answer a question of Kang et al.~\cite{KMRS14} and Loebl et al.~\cite{LRSTT10} and give a counterexample to a conjecture of Reed and Scott~\cite{RS18}.

The rest of the paper is structured as follows. In Section~\ref{s:prelim} we introduce the necessary terminology and tools from the literature and state our main structural results.  In Section~\ref{s:structureproof} we  use the language of Kolmogorov complexity to develop a number of extra tools for analyzing typical structure of hereditary families, and prove \cref{t:critical}.  Finally, in Section~\ref{s:applications}, we derive ~\cref{t:critical2}, and thus ~\cref{t:Speed}, from \cref{t:critical}.

\section{Structure theorems for hereditary graph families}\label{s:prelim}

In this section we present additional definitions and  tools from the literature, as well as state our main structural results.

\subsection{Terminology and notation}\label{s:notation}

Let $H$ be a graph, we say that a graph $G$ is \textit{$H$-free} if $G$ does not contain an induced subgraph isomorphic to $H$. We denote by $\brm{Forb}(H)$ the family of all $H$-free graphs. Clearly, $\brm{Forb}(H)$ is hereditary. More generally, if $\mc{H}$ is a collection of graphs we say that a graph $G$ is \textit{$\mc{H}$-free} if $G$ is $H$-free for every $H \in \mc{H}$, and we denote by $\brm{Forb}(\mc{H})$ the family of all $\mc{H}$-free graphs. We say that a hereditary family $\mc{F}$ is \emph{finitely generated} if there exists a finite collection of graphs $\mc{H}$ such that $\mc{F} = \Forb(\mc{H})$. 

For a graph $G$ and $X \subseteq V(G)$ we denote by $G[X]$ the subgraph of $G$ induced by $X$. 
We say that an injection $\eta: V(H) \to V(G)$  is \emph{an embedding of a graph $H$ into $G$} if $\eta$ is an isomorphism between $H$ and $G[\eta(V(H))]$. Thus $G$ is $H$-free if and only if there does not exist an  embedding of $H$ into $G$.

An \emph{$(a,b)$-bigraph} (or simply a \emph{bigraph}) $J$ is an ordered triple $(H,A,B)$, where $H$ is a bipartite graph and $(A,B)$ is a bipartition of $H$ with $|A|=a$, $|B|=b$. If $A$ and $B$ are ordered then we say that $J$ is \emph{an ordered bigraph}.\footnote{It will be frequently convenient for us to assume that the graphs and bigraphs that we consider are ordered, without necessarily explicitly stating so.} Extending the above definitions from  graphs to bigraphs, we say that an injection $\eta: A \cup B \to V(G)$  is \emph{an embedding of a bigraph $J$ into a graph $G$} if for all $u \in A$ and $v \in B$ we have $uv \in E(H)$ if and only if $\eta(u)\eta(v) \in E(G)$. We say that a graph $G$ is \emph{$J$-free} if there exists no embedding of $J$ into $G$, and we denote by $\Forb(J)$ the hereditary family of all $J$-free graphs. Given two (ordered) subsets $A,B$ of $V(G)$ we denote by $G[A,B]$ the (ordered) bigraph induced by $(A,B)$ in $G$. 

\vskip 10pt

Informally, most of the results about typical structure of hereditary family state that for almost all graphs in a given family their vertex sets can be partitioned into constant number of pieces such that the graphs induced by the sets are ``structured". The following definitions are used to formalize such statements. 

For a graph family $\mc{F}$, we say that  a partition $\mc{X}=(X_1,\ldots, X_l)$  of $V(G)$ is an \emph{$(\mc{F},l)$-partition of a graph $G$} if $G[X_i] \in \mc{F}$ for every $i \in [l]$. Let $\mc{P}(\mc{F},l)$ denote the family of all graphs admitting an $(\mc{F},l)$-partition. Clearly, if $\mc{F}$ is hereditary, then so is $\mc{P}(\mc{F},l)$. More generally, we say that 
a collection  $\mc{X}=(X_1,\ldots, X_l)$ of disjoint subsets of $V(G)$ is an \emph{ $(\mc{F},l,m)$-approximation of $G$}  if $\mc{X}$ is an $(\mc{F},l)$-partition of $G[X]$, where $X = \cup_{i \in [l]}X_i$, and $|V(G)-X| \leq m$. 

Let $\mf{F}=(\mc{F}_1,\ldots,\mc{F}_l)$  be a finite sequence of hereditary families. Generalizing the definitions in the previous paragraph, we define an \emph{$\mf{F}$-partition} of a graph $G$ to be a partition $\mc{X}=(X_1,\ldots, X_l)$  of $V(G)$ such that $G[X_i] \in \mc{F}_i$ for every $i \in [l]$. We denote by $\mc{P}(\mf{F})$ or $\mc{P}(\mc{F}_1,\ldots,\mc{F}_l)$ the family of all graphs admitting an $\mf{F}$-partition.

\vskip 10pt

For a graph $H$, let $\iota(H)$ denote the hereditary family of all graphs isomorphic to an induced subgraph of $H$. Given a graph family $\mc{F}$,  let $\mc{F}^{+}$ denote the family of all graphs $G$ such that $G \setminus v \in \mc{F}$ for some $v \in V(G)$. Let $\mc{S}$ denote the family of all edgeless graphs, and let $\mc{C}$ denote the family of all complete graphs. Thus, for example, $\mc{H}(l)=\mc{P}(\mc{S},l)$.

Finally, we say that a property $\mc{P}$ holds for \emph{almost all graphs} in $\mc{F}$, if $$\lim_{n \to \infty}\frac{|\mc{F}^n \cap \mc{P}|}{|\mc{F}^n|} = 1.$$ 

\subsection{Alon-Balogh-Bollob\'as-Morris structure theorem}\label{s:ABBM}

In this subsection we present several results from~\cite{ABBM11}. 

The first one follows immediately from the definition of the coloring number of a hereditary family.

\begin{lem}[{\cite[Theorem 1]{ABBM11}}]\label{l:lower} Let $\mc{F}$ be a  hereditary family, and let $l =\wpn(\mc{F})$ then $$\h(\mc{F},n) \geq \left(1-\frac 1 l \right) \frac{n^2}{2} - O(1).$$
\end{lem}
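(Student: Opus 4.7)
The plan is a direct counting construction. By the definition of $\chi_c(\mc{F}) = l$, there exist non-negative integers $s,t$ with $s + t = l$ such that $\mc{H}(s,t) \subseteq \mc{F}$, so it suffices to lower-bound $|\mc{H}(s,t)^n|$.

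Fix a balanced ordered partition $(P_1, \ldots, P_l)$ of $[n]$ with $|P_i| \in \{\lfloor n/l \rfloor, \lceil n/l \rceil\}$, and designate $P_1, \ldots, P_s$ as the independent-set parts and $P_{s+1}, \ldots, P_l$ as the clique parts. For each choice of bipartite edges $E^\ast \subseteq \bigcup_{i<j} P_i \times P_j$, the unique graph on $[n]$ whose within-part edges are forced by the designation and whose between-part edges are exactly $E^\ast$ lies in $\mc{H}(s,t)$. Because the partition $(P_1,\ldots,P_l)$ is fixed in advance, distinct choices of $E^\ast$ produce distinct labeled graphs. Hence
\[
|\mc{F}^n| \;\geq\; |\mc{H}(s,t)^n| \;\geq\; 2^{M}, \qquad M := \sum_{1 \leq i < j \leq l} |P_i|\,|P_j|.
\]

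It remains to check that $M \geq (1 - 1/l)\,n^2/2 - O(1)$. Writing $n = ql + r$ with $0 \leq r < l$, the balanced choice gives $\sum_i |P_i|^2 = r(q+1)^2 + (l-r)q^2$, and a short expansion shows $\sum_i |P_i|^2 - n^2/l = r(1 - r/l) \leq l/4$. Since $M = \tfrac{1}{2}\bigl(n^2 - \sum_i |P_i|^2\bigr)$, we obtain $M \geq (1 - 1/l)\,n^2/2 - l/8$, and the lemma follows.

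There is no real obstacle: the argument is essentially ``fix the balanced $l$-partition and toggle between-part edges freely,'' with the only nontrivial arithmetic being the $O(1)$ bound on the deviation of $\sum |P_i|^2$ from $n^2/l$ for a balanced partition.
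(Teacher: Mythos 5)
Your proof is correct, and it is exactly the argument the paper has in mind: the paper states that \cref{l:lower} ``follows immediately from the definition of the coloring number,'' i.e.\ from $\mc{H}(s,l-s)\subseteq\mc{F}$ one counts the graphs obtained by fixing a balanced $l$-partition and choosing the between-part edges freely, which is precisely what you do. The $O(1)$ arithmetic for the balanced partition is also right, so there is nothing to add.
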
 

Alon, Balogh, Bollob\'as and Morris~\cite{ABBM11} give the following useful characterization of thin graph families, which in particular implies that \cref{t:AlonSpeed} holds for them. 

\begin{thm}[{\cite[Theorem 2, Lemma 7]{ABBM11}}]\label{l:Jfree} Let $\mc{F}$ be a hereditary graph family then the following are equivalent. 
	\begin{itemize}
		\item $\mc{F}$ is thin,
		\item there exists a bigraph $J$ such that every graph in $\mc{F}$ is $J$-free,
		\item there exists $\eps > 0$ such that $ \h(n,\mc{F}) \leq n^{2-\eps}$ for all $n$.
	\end{itemize}	
\end{thm}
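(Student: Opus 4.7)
I would prove the three equivalences by forming a cycle (iii)$\Rightarrow$(i)$\Rightarrow$(ii)$\Rightarrow$(iii).

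The first implication (iii)$\Rightarrow$(i) is immediate from \cref{l:lower}: were $\mc{F}$ not thin, then $\chi_c(\mc{F}) \geq 2$ would force $\h(\mc{F},n) \geq n^2/4 - O(1)$, contradicting the assumed upper bound $n^{2-\eps}$.

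For (i)$\Rightarrow$(ii) the plan is a Ramsey-theoretic construction. Since $\mc{F}$ is thin, pick graphs $H_1 \in \mc{H}(2) \setminus \mc{F}$, $H_2 \in \mc{H}(1,1) \setminus \mc{F}$, and $H_3 \in \mc{H}(0,2) \setminus \mc{F}$ together with a witnessing bipartition, split-partition, and clique-cover respectively, and let $J_i$ denote the bigraph of between-part edges of $H_i$. Set $N = \max_i |V(H_i)|$ and let $R = R(N,N)$ be the usual Ramsey number. The key step is to exhibit a single bigraph $J$ on parts $(A,B)$ of size $M$ such that every induced $R \times R$ sub-bigraph of $J$ contains each of $J_1, J_2, J_3$ (and the flip of $J_2$) as an induced sub-bigraph; for $M$ sufficiently large, a random bigraph with edges i.i.d.\ Bernoulli$(1/2)$ satisfies this with positive probability, by a union bound over the $\binom{M}{R}^2$ choices of sub-bigraph and the finitely many target patterns. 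Given such $J$, if some $G \in \mc{F}$ admits an embedding of $J$ with image $(A', B')$, apply Ramsey inside $A'$ and $B'$ to extract $A'' \subseteq A'$ and $B'' \subseteq B'$ of size $R$, each either a clique or an independent set in $G$; the universal property of $J$ then produces an induced copy of whichever of $H_1, H_2, H_3$ matches the two internal types of $(A'', B'')$, contradicting heredity.

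For (ii)$\Rightarrow$(iii), I would first observe that $\Forb(J)$ is itself thin: for each of $\mc{H}(2), \mc{H}(1,1), \mc{H}(0,2)$ one exhibits a graph in that family that contains $J$ as a bigraph by reusing $J$'s between-part adjacency while making its parts internally independent, split, or cliques. The quantitative bound $\h(\Forb(J),n) \leq n^{2-\eps}$ then follows from a K\H{o}v\'{a}ri--S\'{o}s--Tur\'{a}n-type counting argument for induced bipartite configurations, which is a standard tool developed in \cite{ABBM11}. The principal technical obstacle is the existence of the bigraph $J$ in step (i)$\Rightarrow$(ii): arranging a single bigraph whose every large induced sub-bigraph simultaneously realizes all three prescribed patterns demands either a careful probabilistic estimate with good union-bound dependence on $M$ relative to $R$ and $N$, or an explicit universal construction; once $J$ is in hand, the inner Ramsey reduction is routine.
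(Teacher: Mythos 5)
This theorem is quoted from \cite{ABBM11}; the paper contains no proof of it, so I am judging your argument on its own terms. Your (iii)$\Rightarrow$(i) via \cref{l:lower} is correct, and for (ii)$\Rightarrow$(iii) it is legitimate (if somewhat circular in spirit) to invoke the K\H{o}v\'ari--S\'os--Tur\'an-type count of $J$-free graphs from \cite{ABBM11}; your observation that $\Forb(J)$ is thin is easy and fine but not really needed once that counting lemma is granted.

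The genuine gap is in (i)$\Rightarrow$(ii), at precisely the step you call the key one. For the final Ramsey extraction you need homogeneous $A''\subseteq A'$, $B''\subseteq B'$ of size $R$, which forces $M\geq R(R,R)\geq 2^{R/2}$; but your justification for the universal $J$ cannot operate in that regime. For a fixed $R\times R$ sub-bigraph, the probability that a random bigraph misses a fixed $N\times N$ pattern is a constant of order $(1-2^{-N^2})^{\lfloor R/N\rfloor^2}$, independent of $M$, while the number of sub-bigraph pairs is about $M^{2R}$; so the union bound gets \emph{worse} as $M$ grows ("for $M$ sufficiently large" is exactly backwards), and it only closes when $\log M$ is below a constant determined by $N$ alone --- wildly incompatible with $M\geq R(R,R)$. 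Worse, the property you demand is essentially self-defeating: whenever one of $J_1,J_2,J_3$ has an edge and another has a non-edge (which you cannot exclude in general), "every $R\times R$ sub-bigraph contains all patterns" forbids both empty and complete $R\times R$ sub-bigraphs of $J$, which fails for \emph{every} bigraph once $M$ reaches the bipartite Ramsey number of $R$, and which by K\H{o}v\'ari--S\'os--Tur\'an forces $J$ to have simultaneously at most and at least roughly $M^{2-1/R}$ edges once $M\geq (cR)^R$. So the object your proof rests on is unavailable (or at best unestablished) exactly where you need it. The known arguments (Alekseev \cite{Alekseev92}, Bollob\'as--Thomason \cite{BolTho95}, and \cite{ABBM11}) proceed differently: heredity lets one replace $H_1,H_2,H_3$ by \emph{universal} bipartite, split and co-bipartite graphs (which also lie outside $\mc{F}$), and $J$ is taken to be a universal bigraph, e.g.\ with one side realizing all neighbourhood patterns of the other; the homogeneous sets and the required cross-pattern are then extracted by an iterated Ramsey/shattering argument in which the pattern is \emph{selected after} the homogeneous sets are found, rather than demanding that every Ramsey-selected sub-bigraph be universal. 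Your inner reduction (homogeneous sides plus the matching cross-pattern yields an induced $H_i$, minding flips) is indeed routine, but the existence of your $J$ is the heart of the matter and is not established.
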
	

The next theorem provides a structural description of typical graphs in hereditary families, mentioned in the introduction. 

\begin{thm}[{\cite[Theorem 1]{ABBM11}}]\label{t:Alon}
	Let $\mc{F}$ be a hereditary family. Then there exists $\eps >0$ and a thin hereditary family $\mc{T}$ such that almost every  $G \in \mc{F}$ admits an $(\mc{T},\chi_c(\mc{F}),|V(G)|^{1 -\eps})$-approximation. 
\end{thm}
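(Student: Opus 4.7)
The plan is to combine Szemer\'edi's regularity lemma with the colored regularity graph framework of Alekseev~\cite{Alekseev92} and Bollob\'as--Thomason~\cite{BolTho95}, together with a counting argument, to extract a partition with polynomial error. First I would apply the regularity lemma to $G \in \mc{F}$ to obtain an equipartition $V_0, V_1, \ldots, V_K$ with $|V_0|$ small and almost all pairs $\eps$-regular. Each regular pair $(V_i, V_j)$ is classified as \emph{empty}, \emph{complete}, or \emph{mixed} according to whether its density is below $\eps$, above $1-\eps$, or in between; this defines a \emph{colored reduced graph} $R$ on $[K]$.

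The next step is to exploit $\chi_c(\mc{F}) = l$ to constrain $R$. If $R$ admitted no partition of $[K]$ into $l$ parts such that cross-part pair types are uniformly empty/complete and within-part pair types match some admissible pattern associated to a choice $(s, l-s)$ with $\mc{H}(s, l-s) \subseteq \mc{F}$, then by a random blow-up argument one could construct an induced copy of a graph lying in $\mc{H}(s', l+1-s')$ inside $G$ for some $s'$, contradicting the maximality of $l$ in the definition of the coloring number. This produces an $l$-partition of $[K]$ which, pulled back to $V(G)$, yields sets $X_1, \ldots, X_l$ whose complement has size bounded by $|V_0|$ plus the vertices in irregular or mixed-type clusters. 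To see $G[X_i]$ lies in a \emph{thin} hereditary family $\mc{T}$, note that $G[X_i]$ must avoid certain bigraphs: an embedding of such a bigraph would combine with the rigid cross-part structure to produce an induced subgraph witnessing $\mc{H}(s', l+1-s') \subseteq \mc{F}$. Consequently one may take $\mc{T} = \Forb(J)$ for a fixed bigraph $J$ determined by $\mc{F}$, and this family is thin by \cref{l:Jfree}.

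Finally, a counting argument shows the conclusion holds for almost all $G$: graphs in $\mc{F}^n$ failing the structural conclusion admit a strictly shorter encoding relative to the baseline $2^{(1-1/l)n^2/2}$ given by \cref{l:lower}, so they form a vanishing fraction of $\mc{F}^n$. The main obstacle is the quantitative step of upgrading the standard regularity-lemma exceptional set of size $o(n)$ to $n^{1-\eps}$; this requires an iterative refinement in which regularity is reapplied to $V_0$ and to the union of mixed-type clusters, with the regularity parameter tuned to the cluster count at each stage, or alternatively a direct density-increment argument executed inside the hypergraph of ``bad'' partitions. A secondary obstacle is ensuring that a \emph{single} bigraph $J$ works for the whole family $\mc{F}$, which is handled by a compactness argument together with the finite case analysis over the $l+1$ choices of $(s, l-s)$.
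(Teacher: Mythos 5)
First, a point of comparison: this paper does not prove \cref{t:Alon} at all; it is quoted verbatim from \cite[Theorem 1]{ABBM11} and used as a black box, so there is no in-paper argument to measure your proposal against. Judged on its own, your sketch has a genuine gap: it is essentially the Alekseev/Bollob\'as--Thomason regularity-lemma argument, and that method cannot deliver the two features that make the theorem strong. An $\eps$-regular partition controls only densities of pairs of clusters; it says nothing about the induced structure inside a cluster, nor does low (or high) density of a pair forbid a fixed induced bigraph across it. So the step ``$G[X_i]$ must avoid certain bigraphs, hence lies in a thin family $\mc{T}$'' does not follow from the reduced-graph classification: a union of clusters whose internal pairs are all of ``empty'' or ``complete'' type can still contain every fixed induced bipartite pattern. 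Relatedly, the claimed contradiction with $\chi_c(\mc{F})=l$ via ``rigid cross-part structure'' is not available, because in the extremal structure $\mc{H}(s,l-s)$ the edges between parts are completely arbitrary; there is no cross-part rigidity to combine with an embedded bigraph, and an induced embedding lemma across regular pairs needs densities bounded away from $0$ and $1$, which your empty/complete cross pairs do not supply. Indeed the conclusion is simply false for every $G\in\mc{F}$ (it is a statement about almost all $G$), so any correct proof must be counting-theoretic at its core; your final paragraph defers exactly this to an unspecified ``counting argument.''

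The second, quantitative gap is the error term. The regularity lemma yields an exceptional set of size $\Theta(\eps n)$ with tower-type dependence of the number of clusters on $\eps$, and no iteration of the lemma with ``parameters tuned to the cluster count'' gets the uncovered set down to $n^{1-\eps}$ vertices while simultaneously forcing each part to be literally $J$-free; this is precisely why \cite{ABBM11} obtain their $n^{2-\eps}$-type bounds by different means (entropy/counting arguments built on the characterization of thin families recorded here as \cref{l:Jfree}) rather than by Szemer\'edi regularity, which historically only gives the weaker $2^{(1-1/l)n^2/2+o(n^2)}$ statement. The compactness point about a single bigraph $J$ is fine (it is exactly \cref{l:Jfree}), but the two steps you flag as ``obstacles'' --- upgrading the exceptional set and forcing induced $J$-freeness of the parts for almost every graph --- are not technicalities; they are the theorem, and the proposed route does not reach them.
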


Note that  Theorems~\ref{l:Jfree} and~\ref{t:Alon} imply  \cref{t:AlonSpeed}. 

\vskip 10pt
One can consider refining \cref{t:Alon} in several ways. First, we could attempt replacing an approximation of $G$ by a partition, or reducing the ``error'' of the approximation as much as possible. Second, we could attempt refining the structure so that it provides a certificate of membership in $\mc{F}$.  

In the next section we present a refinement of \cref{t:Alon}, which, under a number of technical conditions on $\mc{F}$, has the first of the properties mentioned above and takes a step towards  the second one. A different refinement of \cref{t:Alon} was established by Reed and Scott~\cite{RS18}, and the ideas we learned from their proof are indispensable for our argument. 

One of the key definitions used in the statement of our refinement of \cref{t:Alon} is the following.
Let $\mc{F}$ be a hereditary graph family, and let $l =\wpn(\mc{F})$. We say that a graph $H$ is \emph{$\mc{F}$-reduced} if there exists an integer $0 \leq s \leq l-1$ such that 
$\mc{P}(\iota(H),\mc{H}(s,l-1-s)) \subseteq \mc{F}$.\footnote{In other words, a graph $H$ is $\mc{F}$-reduced if for some $0 \leq s \leq l-1$ the family  $\mc{F}$ contains all graphs which admit a vertex partition into $l$ parts, such that the first part induces a subgraph of $H$, $s$ of the remaining parts are stable sets, and the rest are cliques.} We say that $H$  is  \emph{$\mc{F}$-dangerous} if it is not $\mc{F}$-reduced. Let $\red(\mc{F})$ and  $\dang(\mc{F})$ denote the families of all $\mc{F}$-reduced and $\mc{F}$-dangerous  graphs, respectively. Note that $\red(\mc{F})$ is a thin hereditary family. 

The following structural description of finitely generated monotone graph families by Balogh, Bollob\'{a}s and Simonovits~\cite{BBS09}, serves as a model for our result.

\begin{thm}[Balogh, Bollob\'{a}s and Simonovits~\cite{BBS09}]\label{BBS}
	For every finitely generated monotone graph family $\mc{F}$ there exists $c>0$ such that almost every graph in $\mc{F}$ admits a $(\red(\mc{F}),\wpn(\mc{F}),c)$-approximation.
\end{thm}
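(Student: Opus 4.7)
The plan is to bootstrap from \cref{t:Alon}, which for almost every $G\in\mc{F}$ already produces an $(\mc{T},l,|V(G)|^{1-\eps})$-approximation $\mc{X}=(X_1,\ldots,X_l)$, where $l=\wpn(\mc{F})$ and $\mc{T}$ is a fixed thin hereditary family. Two refinements are required: promote each containment $G[X_i]\in\mc{T}$ to $G[X_i]\in\red(\mc{F})$, and shrink the error $|V(G)\setminus\bigcup_i X_i|$ from $n^{1-\eps}$ down to a constant $c$. The extra inputs distinguishing this setting are the equality $\mc{F}=\Forb(\mc{H})$ for a finite set $\mc{H}$, monotonicity, and the matching lower bound $|\mc{F}^n|\geq 2^{(1-1/l)n^2/2-O(1)}$ from \cref{l:lower}, against which every count of ``exceptional'' graphs must be measured.

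For the first refinement, suppose some part induces an $\mc{F}$-dangerous graph $H$. Then for every $s\in\{0,\ldots,l-1\}$ the family $\mc{P}(\iota(H),\mc{H}(s,l-1-s))$ is not contained in $\mc{F}$, and by finite generation this failure is witnessed by a specific forbidden graph $F_s\in\mc{H}$ whose vertex set splits into a copy of a fixed subgraph of $H$ together with $s$ stable pieces and $l-1-s$ clique pieces. The remaining $l-1$ parts $X_j$ of the approximation lie in the thin family $\mc{T}$ and, being $J$-free for a fixed bigraph $J$ by \cref{l:Jfree}, contain (for large enough $|X_j|$) the large stable or clique substructures needed to realize the $F_s$-template; combined with monotonicity this embeds some $F_s$ into $G$, contradicting $G\in\mc{F}$. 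A union bound over the finitely many possible dangerous ``part templates'' of fixed size then shows that graphs whose approximation has a non-reduced part contribute $o(|\mc{F}^n|)$.

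For the second refinement, examine each vertex $v$ of the error set $Y=V(G)\setminus\bigcup_i X_i$ in turn and try to place $v$ into some $X_i$ so that $G[X_i\cup\{v\}]\in\red(\mc{F})$ and the new partition remains valid. If $v$ cannot be placed, then for each $i\in[l]$ the neighborhood pattern of $v$ on $X_i$ must realize one of a bounded family of ``bad types'' forced by an element of $\mc{H}$, where finite generation again caps the number of such types by a constant. Each bad type rules out a constant fraction of the $2^{|X_i|}$ possible neighborhoods, so encoding a graph whose error set exceeds $c$ after exhausting trivial placements costs $\Omega(c\cdot n)$ bits of entropy deficit relative to the naive encoding by the partition plus free edges between parts. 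Choosing $c$ large enough makes this deficit strictly exceed the gap between $|\mc{F}^n|$ and the main term.

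The main obstacle, shared by both refinements, is the counting bookkeeping against the lower bound $|\mc{F}^n|\geq 2^{(1-1/l)n^2/2-O(1)}$: the per-part or per-vertex savings come only from forbidden patterns that straddle several $X_i$, and they must be combined without double counting. A clean way to organize this is to encode each exceptional $G$ as its proposed $(\red(\mc{F}),l,c)$-approximation plus a small correction, and to use the finite set $\mc{H}$ to argue that the correction carries enough forbidden-subgraph information to beat $|\mc{F}^n|$; this is essentially the route of Balogh, Bollob\'{a}s and Simonovits, which combines the Szemer\'edi regularity lemma with supersaturation counting, and I would expect the bulk of the technical effort to go into executing this encoding step cleanly.
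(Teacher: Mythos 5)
First, note that the paper does not prove this statement at all: it is quoted from Balogh, Bollob\'as and Simonovits~\cite{BBS09} as background, so the only question is whether your sketch would stand on its own. It would not, for two concrete reasons. In your first refinement you claim that if some part $X_i$ induces an $\mc{F}$-dangerous graph $H$, then the witness $F_s\in\mc{H}$ can be embedded into $G$ outright, ``contradicting $G\in\mc{F}$.'' But the approximation coming from \cref{t:Alon} says nothing about the bipartite graphs between the parts: a particular $G$ with this approximation may simply lack the cross-part edge patterns (and the clique pieces inside the other parts --- a thin, $J$-free part need not contain large cliques, only large homogeneous sets of one unspecified kind) needed to realize $F_s$. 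So there is no pointwise contradiction; what is true is only that \emph{almost all} graphs have cross bigraphs rich enough to complete such an embedding, and establishing that richness is the crux of the whole theorem, not a detail. It is exactly what \cite{BBS09} extract from the regularity lemma plus supersaturation, and what the present paper has to build separately (perfect approximations, $(k,s)$-unrestricted pairs, \cref{l:unrestricted}, \cref{l:vertexcritical}). Your sketch assumes this step rather than proving it, and the subsequent ``union bound over dangerous part templates'' cannot repair a deduction that fails graph by graph.

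The second refinement has a quantitative error in the bookkeeping. You propose to beat ``the gap between $|\mc{F}^n|$ and the main term'' $2^{(1-1/l)n^2/2}$ by an entropy saving of order $c\,n$. That gap can be as large as $n^{2-\eps}$ (the reduced parts themselves may carry superlinear entropy, cf.\ \cref{t:AlonSpeed}), so no constant $c$ suffices; moreover the matching lower bound you invoke from \cref{l:lower} does not include the entropy of $l$ reduced parts, since $\mc{P}(\red(\mc{F}),l)$ need not be contained in $\mc{F}$ --- only partitions with one reduced part and the remaining parts stable (or cliques, in the right counts) are guaranteed to lie in $\mc{F}$, which is precisely the ``sandwich'' remark after the theorem in the paper. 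The correct comparison is against $\h(\mc{F},n-d)$ for the $d\le c$ deleted vertices, i.e.\ one needs lower bounds on the increments $\h(\mc{F},n)-\h(\mc{F},n-d)$ (a smoothness/supersaturation input, as in \cref{l:supercritical} and \cref{l:smooth} here, or the regularity-based counting in \cite{BBS09}). Since your proposal defers exactly these components to ``the route of Balogh--Bollob\'as--Simonovits,'' it is an outline of where a proof would go rather than a proof.
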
	

Note that any monotone family $\mc{F}$ contains the family of all graphs which admit a vertex partition into a set inducing a graph in $\red(\mc{F})$ and $\wpn(\mc{F}) -1$ independent sets. Thus Theorem~\ref{BBS}  ``sandwiches" almost  all graphs in $\mc{F}$ between the above family, and the family $\mc{P}(\red(\mc{F}),\wpn(\mc{F}))$, up to deletion of a constant number of vertices. 

\subsection{Typical structure of apex-free hereditary families}\label{s:structure1}
 
In this section we state our first main theorem. First, we need a few more definitions, which are used to describe the restrictions we place on the hereditary family $\mc{F}$ in our refinement of \cref{t:Alon}.

A \emph{substar} is a subgraph of a star, and an \emph{antisubstar} is a complement of a substar. We say that a hereditary family $\mc{F}$ is \emph{apex-free} if $\crit(\mc{F})$ contains a substar and an antisubstar. It turns out that assuming that $\mc{F}$ is apex-free significantly simplifies analysis of its structure.
The following easy observation gives an alternative description of apex-free hereditary families.

\begin{obs}\label{o:apexfree}
	A hereditary family $\mc{F}$ with $l=\chi_c(\mc{F})$ is apex-free if and only if for every $0 \leq s \leq l$ there exists a graph $H \not \in \mc{F}$ such that $H \setminus u \in \mc{H}(s,l-s)$ for some $u \in V(H)$. 
\end{obs}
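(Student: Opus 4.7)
The plan is to establish both directions via a direct translation between the two conditions, based on the elementary observation that ``one vertex plus a stable set'' is a substar and, dually, ``one vertex plus a clique'' is an antisubstar.

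For the forward direction I will fix a substar $K \in \crit(\mc{F})$ and an antisubstar $K' \in \crit(\mc{F})$. For $1 \leq s \leq l$, the $\mc{F}$-dangerousness of $K$ applied with parameter $t = s-1 \in \{0, \ldots, l-1\}$ yields a graph $G \notin \mc{F}$ admitting a partition $(X, Y)$ with $G[X] \in \iota(K)$ and $G[Y] \in \mc{H}(s-1, l-s)$. Since every induced subgraph of a substar is a substar, $G[X]$ has a vertex $u$ (its ``center'') for which $G[X \setminus \{u\}]$ is stable; appending this stable set to the $(s-1, l-s)$-partition of $G[Y]$ exhibits $G \setminus u \in \mc{H}(s, l-s)$, and we take $H := G$. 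For the remaining case $s = 0$ I apply the dangerousness of $K'$ with $t = 0$: the ``center'' of the antisubstar $G[X]$ deletes to a clique, which is absorbed into the $l-1$ cliques of $G[Y]$ to give $G \setminus u \in \mc{H}(0, l)$.

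For the converse direction, I will take the hypothesized graphs $H_0, \ldots, H_l$ (with corresponding vertices $u_0, \ldots, u_l$) and set $N \geq \max_s |V(H_s)|$. Let $K$ be the substar consisting of a star $K_{1,N}$ together with $N$ additional isolated vertices; this $K$ is ``universal'' in the sense that every substar on at most $N+1$ vertices is an induced subgraph of $K$. For each $t \in \{0, \ldots, l-1\}$ I fix a partition of $H_{t+1} \setminus u_{t+1}$ into stable sets $X_1, \ldots, X_{t+1}$ and cliques $Y_1, \ldots, Y_{l-t-1}$; merging $\{u_{t+1}\} \cup X_1$ gives a substar lying in $\iota(K)$ by the choice of $N$, while the remaining parts form an $\mc{H}(t, l-1-t)$-partition. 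Hence $H_{t+1} \in \mc{P}(\iota(K), \mc{H}(t, l-1-t)) \setminus \mc{F}$, so $K \in \crit(\mc{F})$. The symmetric construction, taking $K'$ to be the complement of $K$ and using $H_t$ (grouping $\{u_t\}$ with a clique part of the partition of $H_t \setminus u_t$), produces the required antisubstar.

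The entire argument reduces to the translation described above; I do not expect any real obstacle. The two points requiring care are (i) the vacuous cases in which the stable (or clique) part being merged happens to be empty, where the inclusions $\mc{H}(s-1, l-s) \subseteq \mc{H}(s, l-s)$ and $\mc{H}(s, l-s-1) \subseteq \mc{H}(s, l-s)$ suffice to recover the required partition, and (ii) choosing $K$ and $K'$ uniformly large enough to simultaneously embed all induced substar (resp.\ antisubstar) structures arising from the finitely many hypothesized graphs.
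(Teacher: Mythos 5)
Your proof is correct: the forward direction (substar dangerousness with parameter $t=s-1$ for $1\leq s\leq l$, antisubstar dangerousness with $t=0$ for $s=0$, deleting the center) and the converse (a universal substar $K_{1,N}$ plus $N$ isolated vertices and its complement, with $\{u_s\}$ merged into a stable or clique part of the partition of $H_s\setminus u_s$) are exactly the routine translation this statement calls for, and the empty-part edge cases you flag are indeed handled by the inclusions you cite. The paper states \cref{o:apexfree} without proof, and your argument is the natural verification it leaves to the reader.
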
 


We say that a hereditary family $\mc{F}$ with $l=\wpn(\mc{F}) \geq 2$ is \emph{smooth} if for every $\delta>0$ there exists $n_0$ such that \begin{equation}\label{e:smooth0}
\h(\mc{F},n) \geq  \h(\mc{F},n-1) + ((l-1)/l -\delta)n
\end{equation} for all  integers $n \geq n_0$.  As $\h(\mc{F},n) =  \frac{l-1}{2l}n^2 + o(n^2)$ for every  hereditary family with $\chi_c(\mc{F})=l$, we expect ``reasonable" hereditary families to be smooth, yet we found it difficult to prove that a given  hereditary family is smooth without first understanding its structure.

We are finally ready to state our first main structural result, which is proved in \cref{s:structureproof}.

\begin{restatable}{thm}{critical}\label{t:critical}
	Let $\mc{F}$ be an apex-free hereditary family, let  $l=\wpn(\mc{F}) \geq 2$, let $\mc{K} \subseteq \crit(\mc{F})$ be a finite set of graphs, and let $\mc{T} = \Forb(\mc{K})$. If $\mc{P}(\mc{T},l) \cap \mc{F}$ is smooth then almost all graphs  $G \in \mc{F}$  admit a  $(\mc{T},l)$-partition. 
\end{restatable}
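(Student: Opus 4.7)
The plan is to refine the approximate partition furnished by \cref{t:Alon} into a genuine $(\mc{T},l)$-partition for almost all $G\in\mc{F}^n$, via a compression/Kolmogorov-complexity argument (in the spirit of the tools developed in \cref{s:structureproof}) in which smoothness supplies the counting margin and apex-freeness supplies the forbidden configurations. First, apply \cref{t:Alon} to obtain, for almost every $G\in\mc{F}^n$, an $(\mc{T}',l,n^{1-\eps})$-approximation $(X_1,\ldots,X_l)$ with $\mc{T}'$ thin. Observe that $\red(\mc{F})\subseteq\mc{T}$: indeed $\red(\mc{F})$ is a hereditary family disjoint from $\dang(\mc{F})\supseteq\mc{K}$, so no induced copy of any $K\in\mc{K}$ can occur in a graph in $\red(\mc{F})$. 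Combining this with the $J$-free characterization of thin families from \cref{l:Jfree}, one can arrange that each $G[X_i]\in\mc{T}$.

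Next, let $W=V(G)\setminus\bigcup_i X_i$, so $|W|\le n^{1-\eps}$. Call $w\in W$ \emph{$i$-admissible} if $G[X_i\cup\{w\}]$ is $\mc{K}$-free (equivalently, lies in $\mc{T}$). When each $w\in W$ is $i$-admissible for some $i$, the exceptional vertices can be inserted into admissible parts sequentially (the bounded-order interactions between multiple $W$-vertices landing in the same part are routinely controlled because $|\mc{K}|$ is finite and $|W|=o(n)$), producing a $(\mc{T},l)$-partition of $G$. It therefore suffices to bound the bad set $\mc{F}^n_{\mathrm{bad}}$ of graphs $G\in\mc{F}^n$ admitting some $w\in W$ which is $i$-inadmissible for every $i$.

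The crux is this last bound. If $w$ is $i$-inadmissible for every $i$, then for each $i$ there is an obstruction $S_i\subseteq X_i$ of bounded size with $G[S_i\cup\{w\}]\cong K^{(i)}\in\mc{K}$. Apex-freeness (\cref{o:apexfree}) now enters: for every $0\le s\le l$ it furnishes a graph $H_s\notin\mc{F}$ which becomes an $\mc{H}(s,l-s)$-graph upon removing a single apex vertex. The two kinds of obstructions at $w$ — dangerous $K^{(i)}$'s pinning down local structure in each $X_i$, and the apex graphs $H_s$ forbidding global apex patterns — together rule out a definite fraction of possible adjacency patterns between $w$ and $X_1,\ldots,X_l$, since any non-ruled-out pattern, grafted onto a typical configuration of the parts, would create an induced copy of some $H_s$ and thereby contradict $G\in\mc{F}$. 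Encoding this restriction compresses the description of $G$ enough that, combined with smoothness of $\mc{P}(\mc{T},l)\cap\mc{F}$ — which guarantees $\log|\mc{F}^n|-\log|\mc{F}^n\cap\mc{P}(\mc{T},l)|=o(n^2)$ — one obtains $|\mc{F}^n_{\mathrm{bad}}|=o(|\mc{F}^n|)$. The main obstacle is this compression step: it requires converting the local obstructions near $w$ into a uniform global saving exceeding the combinatorial cost of specifying $w$, the $S_i$'s, and the isomorphism types $K^{(i)}$, which is precisely where the full range of apex-free obstructions (over all $s$) and the smoothness margin both need to be fully exploited.
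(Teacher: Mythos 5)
Your overall strategy (refine the ABBM approximation by a compression argument, with apex-freeness supplying forbidden configurations and smoothness the counting margin) is in the right spirit, but two steps are genuinely missing. First, the assertion that "one can arrange that each $G[X_i]\in\mc{T}$" does not follow from \cref{t:Alon} together with \cref{l:Jfree}: \cref{t:Alon} only gives parts lying in \emph{some} thin hereditary family $\mc{T}'$, and nothing prevents those parts from containing induced copies of graphs in $\mc{K}$; indeed $\mc{T}'$ need not be contained in $\red(\mc{F})$, so your observation $\red(\mc{F})\subseteq\mc{T}$ does not help. Making the parts $\mc{K}$-free is the main content of the theorem, and in the paper it is the heart of \cref{l:supercritical}: one first upgrades to a perfect, i.e.\ $(k,s)$-unrestricted and balanced, approximation (\cref{t:approx}), uses apex-freeness to show every vertex has small complexity toward some part (\cref{l:vertexcritical}), uses the lower bound $\Delta_k(\mc{F},n)\geq((l-1)/l-\delta)n$ via \cref{o:smooth} to show each vertex attaches cheaply to only one part, and thereby \emph{redefines} the partition $(Y_1,\ldots,Y_l)$ wholesale rather than inserting the exceptional vertices one by one (your sequential insertion is not "routinely controlled": a copy of some $K\in\mc{K}$ may use several of the up to $n^{1-\eps}$ inserted vertices simultaneously, and finiteness of $\mc{K}$ gives no handle on this). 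Finally, a copy of $K$ inside some $Y_i$ is killed by embedding a graph $H(K,s)\notin\mc{F}$ witnessing that $K$ is \emph{dangerous}, built inside $G$ via \cref{l:unrestricted} and \cref{c:homogenous2} from homogeneous bases in the unrestricted parts; note that these graphs come from the definition of $\dang(\mc{F})$, whereas the apex graphs of \cref{o:apexfree} are used only for attaching single vertices to parts — your sketch conflates these two distinct roles, and it never supplies the mechanism (unrestrictedness) that lets one "graft" a forbidden pattern onto the fixed graph $G$ rather than onto a hypothetical typical configuration.

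Second, the way you invoke smoothness is both misquoted and circular. Smoothness of $\mc{P}(\mc{T},l)\cap\mc{F}$ states $\h(\mc{P}(\mc{T},l)\cap\mc{F},n)-\h(\mc{P}(\mc{T},l)\cap\mc{F},n-1)\geq((l-1)/l-\delta)n$; it does not assert $\h(\mc{F},n)-\h(\mc{F}\cap\mc{P}(\mc{T},l),n)=o(n^2)$, and the latter (even if true) is far too weak for a per-vertex compression saving of order $n$. What the compression argument actually needs is a lower bound on the increments of $\mc{F}$ itself, $\Delta_k(\mc{F},n)\geq((l-1)/l-\delta)n$, and this cannot be read off the hypothesis, since the hypothesis concerns only the structured subfamily and relating the two speeds is essentially the conclusion being proved. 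The paper closes this circle with the bootstrapping of \cref{l:smooth}: prove the structural statement conditionally on $\Delta_k(\mc{F},n)$ being large, observe via \cref{l:lower} that the set of such $n$ is infinite, and then propagate from $n$ to $n+1$ using smoothness of the structured subfamily together with the conditional structure result. Without this (or an equivalent device) your final counting step does not close.
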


It is likely that the condition  of \cref{t:critical} that $\mc{F}$  is apex-free can be relaxed at the expense of replacing  $(\mc{T},l)$-partition in the conclusion by a $(\mc{T},l,c)$-approximation for some $c = c(\mc{F})$. 
Proving such an extension, however, involves several additional technical difficulties, and, as \cref{t:critical} suffices for our intended applications we chose not to pursue such a generalization.  

Note that if $\mc{F}$ is finitely generated  then there exists a finite $\mc{K} \subseteq \crit(\mc{F})$ such that $\Forb(\mc{K}) = \red(\mc{F})$. Thus \cref{t:critical} implies the following analogue of \cref{BBS} for apex-free hereditary families.

\begin{cor}\label{c:critical} Let $\mc{F}=\Forb(\mc{H})$ be a finitely generated  apex-free hereditary family. If $\mc{P}(\red(\mc{F}),\wpn(\mc{F})) \cap \mc{F}$ is smooth then almost all graphs in $\mc{F}$ admit a  $(\red(\mc{F}),\wpn(\mc{F}))$-partition. 
\end{cor}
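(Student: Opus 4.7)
The plan is to derive \cref{c:critical} from \cref{t:critical} by constructing a finite set $\mc{K} \subseteq \crit(\mc{F})$ with $\Forb(\mc{K}) = \red(\mc{F})$ and then invoking the theorem with $\mc{T} = \red(\mc{F})$. When $l := \wpn(\mc{F}) = 1$, the statement is trivial: $\red(\mc{F}) = \mc{F}$ and every graph in $\mc{F}$ admits the trivial $(\red(\mc{F}), 1)$-partition, so I would assume $l \geq 2$ throughout.

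The main step is the construction of $\mc{K}$. For each $0 \leq s \leq l-1$, let $\red_s(\mc{F})$ denote the family of graphs $H$ with $\mc{P}(\iota(H), \mc{H}(s, l-1-s)) \subseteq \mc{F}$, so that $\red(\mc{F}) = \bigcup_{s=0}^{l-1} \red_s(\mc{F})$ by definition. Unwinding the definitions, $H \notin \red_s(\mc{F})$ precisely when there exist $H' \in \mc{H}$ and a partition $(W, X_1, \ldots, X_s, Y_1, \ldots, Y_{l-1-s})$ of $V(H')$ with each $X_i$ independent in $H'$, each $Y_j$ a clique in $H'$, and $H'[W]$ isomorphic to an induced subgraph of $H$. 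Since $\mc{H}$ is finite, only finitely many graphs $H'[W]$ arise this way; collecting them into a finite family $\mc{K}_s$ gives $\red_s(\mc{F}) = \Forb(\mc{K}_s)$.

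I would then use the elementary fact that a finite union of finitely generated hereditary families is itself finitely generated: a graph lies outside $\bigcup_s \Forb(\mc{K}_s)$ iff for each $s$ it contains some $K_s \in \mc{K}_s$ as an induced subgraph, and a minimal such graph has at most $\sum_s \max_{K \in \mc{K}_s} |V(K)|$ vertices. Hence the minimal forbidden induced subgraphs of $\red(\mc{F})$ form a finite set $\mc{K}$, so $\red(\mc{F}) = \Forb(\mc{K})$. Since every graph in $\mc{K}$ is by definition a (minimal) $\mc{F}$-dangerous graph, $\mc{K} \subseteq \crit(\mc{F})$.

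With $\mc{K}$ in hand, I would apply \cref{t:critical} with $\mc{T} = \Forb(\mc{K}) = \red(\mc{F})$: $\mc{F}$ is apex-free by assumption, $l \geq 2$, $\mc{K} \subseteq \crit(\mc{F})$ is finite, and the smoothness of $\mc{P}(\mc{T}, l) \cap \mc{F} = \mc{P}(\red(\mc{F}), \wpn(\mc{F})) \cap \mc{F}$ is precisely the hypothesis of the corollary. The theorem then yields the desired $(\red(\mc{F}), \wpn(\mc{F}))$-partition for almost all graphs in $\mc{F}$. The only real content is identifying the finitely generated structure of $\red(\mc{F})$ via the explicit description of $\red_s(\mc{F})$; after this bookkeeping, \cref{t:critical} does all the work, so I do not anticipate a serious obstacle here.
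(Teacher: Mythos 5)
Your proposal is correct and follows essentially the same route as the paper: the paper simply asserts that for finitely generated $\mc{F}$ there is a finite $\mc{K} \subseteq \crit(\mc{F})$ with $\Forb(\mc{K}) = \red(\mc{F})$ and then applies \cref{t:critical}, which is exactly your argument. Your explicit verification of that finiteness claim (via the families $\red_s(\mc{F}) = \Forb(\mc{K}_s)$ and the bounded size of minimal graphs outside a finite union of finitely generated hereditary families) is sound and just fills in the detail the paper leaves to the reader.
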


To demonstrate the applicability of \cref{c:critical} and demystify some of its conditions we quickly derive from it the following classical theorem of Kolaitis, Pr\"{o}mel and Rotschild~\cite{KPR87}. 

\begin{thm}[Kolaitis, Pr\"{o}mel and Rotschild~\cite{KPR87}]\label{c:KPR}
Almost all $K_{l+1}$-free graphs are $l$-colorable.  
\end{thm}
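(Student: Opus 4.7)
The plan is to apply \cref{c:critical} to $\mc{F} = \Forb(K_{l+1})$, which is finitely generated. Once the parameters are computed and apex-freeness and smoothness are checked, the $(\red(\mc{F}), \wpn(\mc{F}))$-partition guaranteed by \cref{c:critical} will turn out to be exactly an $l$-coloring.

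First I would compute $\wpn(\mc{F}) = l$: every $l$-colorable graph is $K_{l+1}$-free, so $\mc{H}(l) \subseteq \mc{F}$, while $K_{l+1}$ itself lies in $\mc{H}(s, l+1-s)$ for every $0 \leq s \leq l+1$ via the partition into singletons, so no $\mc{H}(s, l+1-s)$ with $s+t=l+1$ is contained in $\mc{F}$. Next I would identify $\red(\mc{F})$ by unpacking the definition. If $H$ is $\mc{F}$-reduced via some $s \leq l-2$, then $\mc{P}(\iota(H), \mc{H}(s, l-1-s))$ contains a graph in which one of the clique parts has size $l+1$, producing $K_{l+1}$; so only $s = l-1$ can work. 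In that case, a clique in any $G \in \mc{P}(\iota(H), \mc{H}(l-1, 0))$ uses at most $\omega(H)$ vertices of the $H$-part and at most one vertex from each of the $l-1$ independent parts, so membership of all such $G$ in $\mc{F}$ is equivalent to $\omega(H) \leq 1$. Therefore $\red(\mc{F}) = \mc{S}$, the family of edgeless graphs, and an $(\mc{S}, l)$-partition of $G$ is exactly an $l$-coloring of $G$.

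Apex-freeness follows instantly once $\red(\mc{F}) = \mc{S}$ is known: $K_2 \in \crit(\mc{F})$, and the star $K_2 = K_{1,1}$ is a substar, while its complement $\overline{K_2}$ is a substar of $K_{1,2}$, so $K_2$ is simultaneously a substar and an antisubstar. For smoothness of $\mc{P}(\mc{S}, l) \cap \mc{F} = \mc{H}(l)$, I would use a direct extension argument. For each $G \in \mc{H}(l)^{n-1}$ fix a canonical $l$-coloring of $G$ and let $X_{\min}$ be its smallest class, of size at most $(n-1)/l$; add vertex $n$ to $X_{\min}$ and join it to an arbitrary subset of the remaining at least $(n-1)(l-1)/l$ vertices. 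Distinct pairs $(G, N)$ produce distinct extensions in $\mc{H}(l)^n$ (since $G = G' \setminus \{n\}$ and $N$ is the neighborhood of $n$), yielding
\[
\h(\mc{H}(l), n) \;\geq\; \h(\mc{H}(l), n-1) + \left\lceil \frac{(n-1)(l-1)}{l} \right\rceil,
\]
which is the smoothness condition with $\delta = 0$.

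Having verified all hypotheses of \cref{c:critical}, I would invoke it and read off that almost every graph in $\Forb(K_{l+1})$ admits an $(\mc{S}, l)$-partition, i.e.\ is $l$-colorable. The only step that is not purely mechanical is the identification of $\red(\mc{F})$; apex-freeness and smoothness are essentially built-in for this family, so there is no substantive obstacle beyond correctly unfolding the definitions.
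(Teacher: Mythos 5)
Your proposal is correct and follows essentially the same route as the paper: compute $\wpn(\Forb(K_{l+1}))=l$, identify $\red(\mc{F})=\mc{S}$ (which immediately gives apex-freeness), verify that $\mc{P}(\mc{S},l)\cap\mc{F}=\mc{H}(l)$ is smooth, and invoke \cref{c:critical}. The only difference is cosmetic: you re-derive smoothness of $\mc{H}(l)$ by a direct extension count, which is exactly the argument of \cref{l:cleanext} that the paper cites.
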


\begin{proof}
Let $\mc{F}=\Forb(K_{l+1})$. It is easy to check that  $\wpn(\mc{F})=l$, and $\mc{S}=\red(\mc{F})$ is the family of edgeless graphs. The second observation, implies that  $\mc{F}$ is apex-free.  We have $\mc{P}(\mc{S},l)= \mc{H}(l) \subseteq \mc{F}$. (Recall that $ \mc{H}(l)$ is the family of all  $l$-colorable graphs.)   Finally, it is easy to see, and directly follows from \cref{l:cleanext} below, that  $\mc{H}(l)$ is smooth. Hence \cref{c:critical} implies that almost all graphs in $\mc{F}$ are in $\mc{H}(l)$.
\end{proof}

The requirement that $\mc{P}(\mc{T},l) \cap \mc{F}$ is smooth in \cref{t:critical} is particularly technical. Let us present a simple lemma, which can be used to ensure that it is satisfied.  Given $G \in \mc{F}$ with $V(G)=[n]$, let $\ext(G, \mc{F})$ denote the set of all graphs $G' \in \mc{F}$ such that $V(G') = [n+1]$ and $G \subseteq G'$.   
We say that a family $\mc{F}$ is \emph{extendable} if $\chi_c(\mc{F}) \geq 1$, and there exists $n_0 \geq 0$ such that  $|\ext(G, \mc{F})| \geq 1$ for every $G \in \mc{F}$ with $|V(G)|\geq n_0$.
  
  \begin{lem}\label{l:cleanext}
  	Let $\mc{T}_1,\mc{T}_2,\ldots,\mc{T}_l$ be extendable thin graph families. Then  the family $\mc{P}(\mc{T}_1,\mc{T}_2,\ldots,\mc{T}_l)$ is smooth.
  \end{lem}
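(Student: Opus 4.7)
The plan is to prove smoothness by a direct counting estimate on $|\mc{F}^n|/|\mc{F}^{n-1}|$, where $\mc{F}=\mc{P}(\mc{T}_1,\ldots,\mc{T}_l)$. I would first check that $\wpn(\mc{F})=l$: extendability forces $\wpn(\mc{T}_i)=1$, so each $\mc{T}_i$ contains either all independent sets or all cliques, which places $\mc{H}(s,l-s)\subseteq\mc{F}$ for the appropriate $s$ and yields $\wpn(\mc{F})\geq l$; conversely, if $\wpn(\mc{F})\geq l+1$ then a sufficiently large member of $\mc{H}(s',l+1-s')$ would force two of its stable/clique parts into a single thin $\mc{T}_j$, which is ruled out by \cref{l:Jfree}.

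Let $n^0$ be the largest extendability threshold of the $\mc{T}_i$. The core step is to lower bound the number of $\mc{F}^n$-extensions of $G\in\mc{F}^{n-1}$ obtained by adding the vertex $n$. Given a $\mf{T}$-partition $(X_1,\ldots,X_l)$ of $G$ and any $i$ with $|X_i|\geq n^0$, extendability of $\mc{T}_i$ provides at least one valid neighborhood of $n$ inside $X_i$, and the edges from $n$ to $V(G)\setminus X_i$ are then free, giving $2^{n-1-|X_i|}$ distinct extensions. Call $G$ \emph{balanced} if some $\mf{T}$-partition of $G$ has every part of size at least $n^0$; pigeonholing the smallest part then yields at least $2^{(l-1)(n-1)/l}$ distinct extensions.

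The hard part is controlling the unbalanced graphs. Any unbalanced $G$ admits a $\mf{T}$-partition whose $i$-th part, for some $i$, has size less than $n^0$, so $G$ consists of a graph $G''$ lying in $\mc{F}'_i:=\mc{P}(\mc{T}_1,\ldots,\widehat{\mc{T}_i},\ldots,\mc{T}_l)$ together with a set of fewer than $n^0$ vertices joined to $G''$ by arbitrary edges. Verifying $\wpn(\mc{F}'_i)\leq l-1$ by the same \cref{l:Jfree} argument as above, \cref{t:AlonSpeed} bounds the number of unbalanced $G$ by $2^{(1-1/(l-1))(n-1)^2/2+o(n^{2-\eps})}$, while \cref{l:lower} gives $|\mc{F}^{n-1}|\geq 2^{(1-1/l)(n-1)^2/2-O(1)}$; the ratio is $2^{-\Omega(n^2)}$, so almost every $G\in\mc{F}^{n-1}$ is balanced.

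Putting these together, $|\mc{F}^n|\geq (1-o(1))|\mc{F}^{n-1}|\cdot 2^{(l-1)(n-1)/l}$, hence $\h(\mc{F},n)-\h(\mc{F},n-1)\geq ((l-1)/l)n-O(1)$. For any fixed $\delta>0$ and $n$ large enough, the $O(1)$ term is absorbed into $\delta n$, establishing smoothness.
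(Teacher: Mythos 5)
Your proposal is correct and follows essentially the same route as the paper: for a graph whose $\mf{T}$-partition has all parts above the extendability threshold, extend the smallest part using extendability and choose the edges to the other parts freely, giving $2^{(l-1)(n-1)/l}$ extensions, while graphs admitting only partitions with a tiny part are shown to be a vanishing fraction by a speed-comparison count (a step the paper leaves as "an easy counting argument", which you fill in via \cref{t:AlonSpeed} and \cref{l:lower}, and you likewise make explicit the claim $\wpn(\mc{F})=l$ that the paper dismisses as clear).
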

  
  \begin{proof}
  	Let $\mf{T}=(\mc{T}_1,\mc{T}_2,\ldots,\mc{T}_l)$ and let $\mc{F} = \mc{P}(\mf{T}) = \mc{P}(\mc{T}_1,\mc{T}_2,\ldots,\mc{T}_l)$. Clearly, $\chi_c(\mc{F})=l.$
  	
  Let $n_0$ be such that  $|\ext(G, \mc{T}_i)| \geq 1$ for every $i \in [l]$ and every $G \in \mc{T}_i$ such that $|V(G)| \geq n_0$. An easy counting argument shows that for almost every $G \in \mc{F}$ and every $(\mc{T}_1,\mc{T}_2,\ldots,\mc{T}_l)$-partition $\mc{X}$ of $V(G)$ we have $|X| \geq n_0$ for every $X \in \mc{X}$. 
  
Let  $G \in \mc{F}$ be a graph which has the above property with $V(G)=[n]$, and let $\mc{X}=(X_1,X_2,\ldots,X_l)$ be an $\mf{T}$-partition $\mc{X}$ of $V(G)$. Suppose without loss of generality that
 $|X_1| \leq n/l$. By our assumption $|X_1| \geq n_0$, and so there exists 
  	a graph $G'_1 \in \mc{T}_1$ such that $V(G'_1)=X_1 \cup \{n+1\}$ and $G[X_1] \subseteq G'_1$. We can obtain a graph  in $\ext(G, \mc{F})$ by adding to $G \cup G'_1$  any subset of edges joining $[n+1]$ to $[n] - X_1$, implying that $|\ext(G, \mc{F})| \geq 2^{(l-1)n/l}$.
  	
 It follows that $\h(\mc{F},n+1) \geq \h(\mc{F},n) + (l-1)n/l - O(1)$, implying (\ref{e:smooth0}) for all $n$ sufficiently  large with respect to $\delta$.
  \end{proof}	
  
 We finish this section with a statement of a couple of auxiliary result which are needed to derive \cref{t:Speed} from \cref{t:critical}.  
  
 The applications of our structure theorem use not only existence of a structured partition of a typical graph, but the fact that such a partition is unique and essentially balanced in the following sense. Given $\eps>0$, 
 we say that a partition $\mc{X}$ of an $n$ element set is \emph{$\eps$-balanced} if $|X - n/|\mc{X}|| \leq n^{1 - \eps}$ for all $X \in \mc{X}$. 
 
 We say that a  hereditary family $\mc{F}$ is \emph{meager} if it is thin and apex-free, i.e. $\mc{F}$ is meager if and only if it does not contain some substar and some antisubstar. \footnote{Thus a family $\mc{F}$ is apex-free if and only if $\red(\mc{F})$ is meager.}
 Meager  hereditary families have a significantly more restrictive structure than general thin families. In particular, 
 a rough structure theorem for all (rather than almost all) graphs in a meager hereditary family is given in~\cite{CNRS15}.
 
 \begin{restatable}{lem}{uniqone}\label{l:uniqueness1} Let $\mc{F}$ be a hereditary family with $l = \wpn(\mc{F})$, and let $\mc{T}$ be meager. Then there exists $\eps>0$ such that for almost  all graphs in $G \in \mc{F}$ the following holds. If $G$ admits a $(\mc{T},l)$-partition then such a partition is $\eps$-balanced, and, if additionally  $\mc{F}$ is smooth, then this partition is  unique.   	
 \end{restatable}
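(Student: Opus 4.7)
The plan is to prove the balancedness and uniqueness claims separately: the former from thinness of $\mc{T}$ and a convexity count, the latter by a refinement argument combining thinness with smoothness of $\mc{F}$.

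For balancedness, since $\mc{T}$ is thin, \cref{l:Jfree} yields $|\mc{T}^k| \leq 2^{k^{2-\eps_0}}$ for some $\eps_0 > 0$. The number of $G \in \mc{F}^n$ admitting a $(\mc{T}, l)$-partition of type $(n_1, \ldots, n_l)$ is at most
\[
\binom{n}{n_1, \ldots, n_l} \prod_{i=1}^l |\mc{T}^{n_i}| \cdot 2^{\sum_{i<j} n_i n_j}.
\]
If some $|n_i - n/l| \geq n^{1-\eps}$, then by convexity $\sum_{i<j} n_i n_j \leq (l-1)n^2/(2l) - \Omega(n^{2-2\eps})$, so this count is at most $2^{(1-1/l)n^2/2 + O(n^{2-\eps_0}) - \Omega(n^{2-2\eps})}$. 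Comparing with $|\mc{F}^n| \geq 2^{(1-1/l)n^2/2 - O(1)}$ from \cref{l:lower} and summing over the $O(n^l)$ possible partition types, choosing any $\eps < \eps_0 / 2$ makes the fraction of graphs with unbalanced partitions $o(1)$.

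For uniqueness, suppose toward contradiction that a positive fraction of $G \in \mc{F}^n$ admit two distinct, and by balancedness both $\eps$-balanced, $(\mc{T}, l)$-partitions $\mc{X}$ and $\mc{Y}$. Form the common refinement $Z_{ij} := X_i \cap Y_j$. Since $\mc{X} \ne \mc{Y}$ up to relabeling parts and both are balanced, at least one off-diagonal cell $Z_{ij}$ with $i \ne j$ has size $\Omega(n^{1-\eps})$. The conditions $G[X_i], G[Y_j] \in \mc{T}$ together with thinness force the cross-cell bigraphs $G[Z_{ij}, Z_{ij'}]$ (for $j' \ne j$) and $G[Z_{ij}, Z_{i'j}]$ (for $i' \ne i$) to be $J$-free for the bigraph $J$ of \cref{l:Jfree}. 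A standard counting bound for $J$-free bigraphs then shows the number of $G$ with such doubly-partitioned structure is at most $2^{(1-1/l)n^2/2 - \Omega(n^{2-\eps'})}$ for some $\eps' > 0$, whereas smoothness combined with \cref{l:lower} should furnish a lower bound $|\mc{F}^n| \geq 2^{(1-1/l)n^2/2 - o(n^{2-\eps'})}$, giving the contradiction.

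The main obstacle is the final step: establishing a lower bound on $|\mc{F}^n|$ with error term $o(n^{2-\eps'})$ rather than the $\Theta(\delta n^2)$ produced by naively telescoping the smoothness inequality. I expect the proof to apply smoothness locally: pair each putative multi-partition $G \in \mc{F}^n$ with its single-vertex-deletion residues in $\mc{F}^{n-1}$, then argue that each residue has many one-vertex extensions in $\mc{F}$ (from smoothness) but only a few of those have the doubly-partitioned structure with $J$-free cross-bigraphs (from the refined structural constraints on the cell containing the new vertex). Quantifying this gap is the technical crux.
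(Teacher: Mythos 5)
Your balancedness argument is correct and is essentially the paper's own (the paper runs the same convexity computation in Kolmogorov-complexity language via \cref{c:approx1} and \cref{l:balanced}); for that half only thinness of $\mc{T}$ is needed, as you use. The uniqueness half, however, has a genuine gap at its first step: it is not true that two distinct $\eps$-balanced $(\mc{T},l)$-partitions must produce an off-diagonal refinement cell of size $\Omega(n^{1-\eps})$. The two partitions may differ by moving a single vertex (or $O(1)$ vertices) from one part to another; then every off-diagonal cell is bounded, there is no extra $\Omega(n^{2-\eps'})$ saving from cross-cell $J$-freeness, and a global count at precision $2^{(1-1/l)n^2/2-\Omega(n^{2-\eps'})}$ cannot exclude such graphs: graphs admitting two partitions that differ on one vertex can number as many as $2^{(1-1/l)n^2/2-\Theta(n)}$, so the comparison must be made at the per-vertex scale, and this is exactly where smoothness enters. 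Incidentally, the step you flag as the technical crux is not an obstacle at all: \cref{l:lower} already gives $|\mc{F}^n|\geq 2^{(1-1/l)n^2/2-O(1)}$, so no refined lower bound on $|\mc{F}^n|$ is needed.

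What is actually needed, and what the paper does, is the following. Smoothness is converted into a per-vertex statement (\cref{o:smooth}): for almost all $G\in\mc{F}$, every vertex satisfies $C_G(v)\geq\left(\frac{l-1}{l}-\delta\right)n$, i.e.\ the edges at each vertex carry nearly full entropy. If $G$ had two distinct $(\mc{T},l)$-partitions, one finds a vertex $v$ and parts $X_1\in\mc{X}_1$, $X_2\in\mc{X}_2$ with $v\in X_1\cap X_2$ and $|X_1\cup X_2|\geq n/l+n/(4l^2)$ (this covers the one-vertex-moved case), and then uses that $\mc{T}$ is \emph{meager} --- not merely thin --- to deduce via \cref{l:homogenous} that the attachment of $v$ to each of $X_1\setminus\{v\}$ and $X_2\setminus\{v\}$ has complexity $o(n)$; hence $C_G(v)\leq n-|X_1\cup X_2|+o(n)$, contradicting the smoothness bound (\cref{l:toUniqueness}). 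Your proposal never invokes meagerness, only $J$-freeness, and thinness alone does not restrict how a single vertex attaches to a part of the partition containing it, so the per-vertex saving required to handle nearly identical partitions is unavailable in your setup.
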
 
 
 Lemmas~\ref{l:cleanext} and~\ref{l:uniqueness1} have the following immediate useful corollary.
 
 \begin{cor}\label{c:uniqueness} Let $\mf{T}=(\mc{T}_1,\mc{T}_2,\ldots,\mc{T}_l)$, where $\mc{T}_i$ is a meager extendable hereditary family for every $i \in [l]$. Then there exists $\eps>0$ such that  almost  all graphs in $\mc{P}(\mf{T})$ admit a unique  $\mf{T}$-partition and such a partition is $\eps$-balanced.   		
 \end{cor}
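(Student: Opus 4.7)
The plan is to combine \cref{l:cleanext} and \cref{l:uniqueness1} by choosing a single meager hereditary family that encompasses the tuple $\mf{T}$. Set $\mc{F} := \mc{P}(\mf{T})$. Since every $\mc{T}_i$ is extendable and, being meager, thin, \cref{l:cleanext} immediately yields that $\mc{F}$ is smooth with $\chi_c(\mc{F}) = l$.

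Now define $\mc{T} := \bigcup_{i=1}^{l} \mc{T}_i$; this is hereditary as a union of hereditary families, and any $\mf{T}$-partition $(X_1, \dots, X_l)$ of a graph $G$ is automatically a $(\mc{T},l)$-partition because $G[X_i] \in \mc{T}_i \subseteq \mc{T}$. I next verify that $\mc{T}$ is meager. For apex-freeness, for each $i$ choose a substar $S_i \notin \mc{T}_i$ and an antisubstar $S_i' \notin \mc{T}_i$ (which exist by meagerness of $\mc{T}_i$); a sufficiently large star $K_{1,N}$ contains every $S_i$ as an induced subgraph, so by heredity $K_{1,N} \notin \mc{T}_i$ for every $i$ and hence $K_{1,N} \notin \mc{T}$, exhibiting a substar in $\dang(\mc{T})$. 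The antisubstar case is symmetric. For thinness, \cref{l:Jfree} furnishes bigraphs $J_i$ with $\mc{T}_i \subseteq \Forb(J_i)$; it suffices to produce, for each $s \in \{0,1,2\}$, a graph in $\mc{H}(s,2-s)$ that contains every $J_i$ as an induced bigraph, since such a graph lies outside each $\mc{T}_i$ and hence outside $\mc{T}$. For $s = 2$ a disjoint union of the $J_i$'s works; $s = 0$ follows by complementation; and $s = 1$ requires a sufficiently large split graph whose bipartite cross-section between its stable set and clique is rich enough to embed every $J_i$ in either of the two possible orientations.

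With $\mc{T}$ meager, \cref{l:uniqueness1} applied to $\mc{F}$ and $\mc{T}$ produces $\eps > 0$ such that, for almost every $G \in \mc{F}$, every $(\mc{T},l)$-partition of $G$ is $\eps$-balanced and, via smoothness of $\mc{F}$, unique. Any $\mf{T}$-partition of $G$ is a $(\mc{T},l)$-partition and thus inherits the $\eps$-balancedness; two distinct $\mf{T}$-partitions would yield two distinct $(\mc{T},l)$-partitions and violate uniqueness. The main obstacle I anticipate is the thinness verification for $\mc{T}$ in the split case $s=1$; all other steps are purely formal consequences of the definitions and of the two invoked lemmas.
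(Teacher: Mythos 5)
Your overall route is exactly the paper's: set $\mc{F}=\mc{P}(\mf{T})$, get smoothness from \cref{l:cleanext}, pass to the union $\mc{T}=\cup_{i=1}^l\mc{T}_i$, check that $\mc{T}$ is meager, and apply \cref{l:uniqueness1}. The one step the paper leaves unstated --- meagerness of the union --- is where your write-up has a concrete error: a substar need not be an induced subgraph of a large star. A substar is a subgraph of a star, i.e.\ a graph of the form $K_{1,j}\cup\overline{K_m}$, and for instance $K_2\cup K_1$ embeds in no $K_{1,N}$ as an induced subgraph. This is not just a gap in the justification: if $\mc{T}_1$ is the (meager, extendable, hereditary) family of all induced subgraphs of stars, then every excluded substar of $\mc{T}_1$ has an edge together with an isolated vertex, while every star $K_{1,N}$ lies in $\mc{T}_1$, so your proposed witness does not lie outside $\mc{T}$. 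The repair is immediate: take $S=K_{1,N}\cup\overline{K_N}$ for $N$ large; this is itself a substar, it contains every substar on at most $N$ vertices as an induced subgraph, hence $S\notin\mc{T}_i$ for every $i$ by heredity. The antisubstar witness is the complement of $S$.

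Once these two witnesses are in hand, the separate thinness verification via the bigraphs $J_i$ is unnecessary, so the obstacle you flag in the split case $s=1$ disappears: every substar lies in $\mc{H}(2)$ and in $\mc{H}(1,1)$, and every antisubstar lies in $\mc{H}(0,2)$, so excluding $S$ and $\overline{S}$ already forces $\chi_c(\mc{T})\le 1$; indeed the paper's definition of meager records precisely the equivalence with ``does not contain some substar and some antisubstar''. (As written, your thinness argument also has a soft spot at $s=0$: complementing the $s=2$ construction produces a graph containing the bipartite complements $\overline{J_i}$ as bigraphs, not the $J_i$ themselves.) With meagerness of $\mc{T}$ established, the remainder of your argument is correct and coincides with the paper's proof.
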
 
 
 \begin{proof}
Let $\mc{F} = \mc{P}(\mf{T})$ and let  $\mc{U} = \cup_{i=1}^{l}\mc{T}_i$.  Then $\mc{F}$ is smooth by \cref{l:cleanext}, and $\mc{U}$ is a  meager hereditary family. By definition, every graph in $\mc{F}$ admits a $\mf{T}$-partition which is in particular a  $(\mc{U},l)$-partition. Thus by \cref{l:uniqueness1} for almost all graphs in $\mc{F}$ such an $\mf{T}$-partition  is unique and $\eps$-balanced. 
 \end{proof}
  
 Finally, for certain applications we need additional stability properties of structured partitions guaranteed by the next lemma. Let $(G_1,\ldots,G_l)$ be a collection of vertex disjoint graphs. We say that a graph $G$ is an extension of $(G_1,\ldots,G_l)$ if $V(G)=\cup_{i \in [l]}V(G_i)$, and $G_i$ is an induced subgraph of $G$ for every $i \in [l]$.

 \begin{restatable}{lem}{uniqtwo}\label{l:uniqueness2} Let $\mc{T}$ be a meager hereditary family, let $l$ be an integer, and let $\eps>0$ be real. Let $(G_1,\ldots,G_l)$ be a collection of vertex disjoint graphs such that $G_i \in \mc{T}$ for every $i \in [l]$, and $ \mc{X} =(V(G_1),\ldots,V(G_l))$ 
is  an $\eps$-balanced partition of $[n]$. Then $\mc{X}$ is the unique $(\mc{T},l)$-partition of $G$ for almost every extension $G$ of $(G_1,\ldots,G_l)$. 
 \end{restatable}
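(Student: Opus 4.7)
The plan is a union bound over candidate alternative partitions. For a uniformly random extension $G$ of $(G_1,\dots,G_l)$ and any unordered partition $\mc{Y}\neq\mc{X}$, I would fix the relabeling that maximizes $\sum_k|Y_k\cap V(G_k)|$ and set $D=\sum_k|V(G_k)\setminus Y_k|$, so $D\geq 1$ iff $\mc{Y}\neq\mc{X}$. Since the number of $\mc{Y}$ with a given displacement $D$ is at most $l^D\binom{n}{D}\leq(enl)^D$, it would suffice to prove, for a constant $\delta=\delta(\mc{T})>0$,
$$\Pr[\mc{Y}\text{ is a valid }(\mc{T},l)\text{-partition of }G]\leq 2^{-\delta Dn/l},$$
for then $\sum_{D\geq 1}(enl)^D\cdot 2^{-\delta Dn/l}=o(1)$, using the $\eps$-balance estimate $n_i\geq n/(2l)$ for $n$ large.

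The bound on a single $\mc{Y}$ would use both ingredients of meagerness. First, by \cref{l:Jfree}, thinness of $\mc{T}$ gives a bigraph $J_0$ with $\mc{T}\subseteq\Forb(J_0)$; second, apex-freeness gives a substar $S^*\notin\mc{T}$ and an antisubstar $T^*\notin\mc{T}$. I would Ramsey-iterate inside each $G_j$ to extract $\Omega(n_j)$ pairwise disjoint induced copies of $K_c$ or of $\overline{K_c}$, where $c\geq\max(|V(S^*)|,|V(T^*)|)$; by pigeonhole we may assume that, in $G_j$, the $\overline{K_c}$'s are in majority (the clique case being symmetric, handled via $T^*$). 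Consider first a single misplaced vertex $v\in V(G_i)\cap Y_j$ with $j\neq i$. Validity of $\mc{Y}$ forces $G[Y_j]\in\mc{T}$, and $G[Y_j]$ is obtained from $G_j$ (minus its out-moved vertices) by attaching $v$ via edges chosen uniformly at random in the extension. Each disjoint $\overline{K_c}\subseteq V(G_j)\cap Y_j$ imposes an independent, non-trivial forbidden pattern on $v$'s adjacency to that $\overline{K_c}$ (namely, the patterns that would make the restriction of $G[Y_j]$ to $\{v\}\cup\overline{K_c}$ contain an induced copy of $S^*$), eliminating a positive fraction of the $2^c$ possibilities. Independence of the $\Omega(n_j)$ constraints yields $\Pr[G[Y_j]\in\mc{T}]\leq 2^{-\Omega(n_j)}=2^{-\Omega(n/l)}$.

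For general $D$ I would iterate this single-vertex bound: each misplaced vertex is to contribute an independent factor of $2^{-\Omega(n/l)}$ from its random adjacency to a fresh batch of stable sets (or cliques) in its target part, giving $\Pr[\mc{Y}\text{ valid}]\leq 2^{-\Omega(Dn/l)}$. When $D$ is large (say $D\geq n^{1-\eps/2}$) it is cleaner to instead invoke $J_0$-freeness of each $G[Y_k]$: some pair $(k,i)$ with $i\neq k$ then has $c_{kk}c_{ki}:=|Y_k\cap V(G_k)|\cdot|Y_k\cap V(G_i)|$ polynomially large, and a uniform random bipartite graph on sides of such sizes contains $J_0$ except with probability $2^{-\omega(n)}$, swamping the $(enl)^D\leq 2^{O(n)}$ union-bound count. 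The main obstacle I anticipate is the multi-vertex case in the small-$D$ regime: when several misplaced vertices land in the same $Y_j$, the edges among them that come from a common $V(G_i)$ are fixed rather than random, and the Ramsey reservoir inside $V(G_j)\cap Y_j$ must survive removal of the $O(D)$ out-moved and addition of the $O(D)$ in-moved vertices. I would handle this by restricting to Ramsey copies disjoint from the $O(D)$ affected vertices (of which $\Omega(n_j)$ remain since $D\ll n_j$), processing the misplaced vertices one at a time, and charging each with an independent $2^{-\Omega(n/l)}$ factor from a disjoint batch of stable sets it meets at random. This bookkeeping is the technical heart of the proof; once set up, summing over $D\geq 1$ closes the argument.
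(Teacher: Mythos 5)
Your approach is genuinely different from the paper's. The paper proves this lemma in a few lines by reusing \cref{l:toUniqueness}: it checks that almost every extension $G$ of $(G_1,\ldots,G_l)$ satisfies $C(G)\ge\frac{l-1}{2l}n^2-O(n^{2-\eps})$ and $C_G(v)\ge\left(\frac{l-1}{l}-\delta\right)n$ for every $v$ (both by counting extensions and the ways to attach a single vertex), and then quotes \cref{l:toUniqueness}, which was already proved en route to \cref{l:uniqueness1}. Your proof is a direct, self-contained union bound over alternative partitions $\mc{Y}$, parameterized by displacement $D$, combining Ramsey extraction with the forbidden substar/antisubstar (apex-freeness) in the small-$D$ regime and $J_0$-freeness (thinness, via \cref{l:Jfree}) in the large-$D$ regime. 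This avoids the Kolmogorov-complexity framework entirely; what the paper's route buys is brevity by amortizing \cref{l:toUniqueness} over both uniqueness lemmas, at the cost of carrying that whole machinery.

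The obstacle you flag as the ``technical heart'' is not a real obstacle, and the ``disjoint batches'' fix is both unnecessary and, for $D\gg 1$, unaffordable (you cannot carve $\Omega(D\cdot n/l)$ disjoint homogeneous sets out of a part of size $\approx n/l$). For each misplaced $v\in Y_j\setminus V(G_j)$, hereditariness gives $G[Y_j]\in\mc{T}\Rightarrow G[\{v\}\cup(V(G_j)\cap Y_j)]\in\mc{T}$, and the latter events for distinct misplaced $v$ in the same $Y_j$ depend on disjoint coin flips (namely, $v$'s edges into $V(G_j)\cap Y_j$), so they are automatically independent even when a single shared Ramsey reservoir is used; the edges among misplaced vertices inherited from a common $G_i$, which you worry are ``fixed,'' simply never enter the bound. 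Two smaller fixes in the large-$D$ case: after the optimal relabeling you should allow any two sources $i\ne i'$ with $c_{ki}c_{ki'}$ polynomially large (the diagonal entry $c_{kk}$ may be negligible), and the failure probability coming from $\Omega(n^{2-\eps})$ disjoint $k\times k$ windows is $2^{-\Omega(n^{2-\eps})}$, which is what you actually need, since the count $l^D\binom{n}{D}$ is $2^{O(n\log n)}$, not $2^{O(n)}$.
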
   
  
 Lemmas \ref{l:uniqueness1} and \ref{l:uniqueness2} are proved in \cref{s:structureproof}.
 
\subsection{Exact structure of critical hereditary families}\label{s:critical}

In this section we state our second main structural result, which implies \cref{t:Speed}. It extends a  characterization of hereditary families with subexponential speed due to Alekseev~\cite{Alekseev97} and Balogh, Bollob\'as and Weinreich~\cite{BBW00}, which we now present.

Informally, adjacencies in graphs in such families are determined by a constant number of vertices. 
To formalize this statement, we introduce the following definitions.
We say that a set $X \subseteq V(G)$ is a \emph{crown} of a graph $G$ if for every $v \in V(G)$ either $v$ is adjacent to every vertex of $X$, or  $v$ is not adjacent to any vertex in $X$.\footnote{Note that this implies, in particular, that $G[X]$ is edgeless or complete.}
We say that a set $S \subseteq V(G)$ is a \emph{core} of  $G$ if $V(G)-S$  is a crown of $G$. We say that a graph $G$ is an \emph{$s$-star} for an integer $s \geq 0$ if $G$ has a core of size at most $s$. Thus $0$-stars are exactly complete or edgeless graphs, and $1$-stars are induced subgraphs of stars or antistars. 

We say that a hereditary family $\mc{F}$  is \emph{$s$-star-like} if there exists an integer $n_0$ such that every graph $G \in \mc{F}$ with $|V(G)| \geq n_0$ is an $s$-star. We say that $\mc{F}$ is \emph{star-like} if it is $s$-star-like for some non-negative integer $s$, and that $\mc{F}$ is \emph{non-star-like}, otherwise. The results of~\cite{Alekseev97,BBW00} imply, in particular, that a hereditary family has a subexponential speed if and only if it is star-like. 

Balogh, Bollob\'as and Weinreich~\cite{BBW00} give a detailed structural description of star-like hereditary families. To state it we need  the following definitions. 

Every graph $s$-star $G$ with the core $S$ can be encoded by $G[S]$, the map $\alpha: S \to \{0,1\}$ which determines for every $v \in S$ whether $v $ is adjacent to all vertices in the crown $V(G)-S$ or not, and  $\beta \in \{0,1\}$ which determines whether $G[V(G)-S]$ is complete or edgeless. Based on this way of encoding $s$-stars, we define 
an \emph{$s$-star system} (or simply a \emph{star system}) $\mc{J}$ as a triple $\mc{J}=(J,\alpha, \beta)$, where $J$ is a graph with $|V(J)|\leq s$, $\alpha: V(J) \to \{0,1\}$, and $\beta \in \{0,1\}$.  We say that a star system $(J,\alpha, \beta)$ is \emph{irreducible} if it corresponds to a minimal core, i.e. for every $v \in V(J)$ such that $\alpha(v) = \beta$ there exists $u \in V(J) - \{v\}$ such that either $\alpha(u)=1$ and $uv \not \in E(J)$, or  $\alpha(u)=0$ and $uv \not \in E(J)$.

Given an $s$-star system $\mc{J}=(J,\alpha, \beta)$, define a \emph{$\mc{J}$-template} in a graph $G$ to be an embedding $\psi$ of $J$ into $G$ such that,  denoting the image of $\psi$ by $Z$, we have
\begin{itemize}	
	\item if $\alpha(v)=1$ then $\psi(v)$ is adjacent to every vertex in $V(G) - Z$  in $G$, and, otherwise,
	$\psi(v)$ is adjacent to no vertex in $V(G)- Z$, and,
	\item if $\beta=1$ then $V(G) - Z$  is a clique in $G$, and, otherwise,  $V(G) - Z$  is an independent set.
\end{itemize}

Note, that the above properties imply, in particular, that $Z$ is a core of $G$, and thus $G$ is an $s$-star. Moreover, if $|V(G)-Z|\geq 2$, then  $Z$ is a minimal core of $G$ if and only if $\mc{J}$ is irreducible.
Let $\mc{P}(\mc{J})$ denote the family of induced subgraphs of all graphs which admit a 
$\mc{J}$-template. The following observation is straightforward. 

\begin{obs}\label{o:starsystems}
	Let $G$ be a graph and $s \geq 0$ be an integer. Then $G$ is an $s$-star if and only if $G \in \mc{P}(\mc{J})$ for some  irreducible $s$-star system $\mc{J}$.
\end{obs}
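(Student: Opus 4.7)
The plan is to prove the two implications separately. The reverse direction is essentially a direct unpacking of the definitions combined with the closure of the class of $s$-stars under induced subgraphs, while the forward direction extracts an irreducible star system from a minimal core of $G$.

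For $(\Leftarrow)$, I would assume $G \in \mc{P}(\mc{J})$ for some irreducible $s$-star system $\mc{J}=(J,\alpha,\beta)$. Then $G$ is an induced subgraph of a graph $G'$ admitting a $\mc{J}$-template $\psi$ with image $Z$, where $|Z|=|V(J)|\leq s$. The template conditions immediately force $V(G')-Z$ to be a crown of $G'$, so $Z$ is a core of $G'$ and $G'$ is an $s$-star. It then suffices to verify that induced subgraphs of $s$-stars are $s$-stars: for any $H$ induced in $G'$, the ``all-or-none'' property inherited from $G'$ makes $V(H)-Z$ a crown of $H$, so $Z \cap V(H)$ is a core of $H$ of size at most $s$.

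For $(\Rightarrow)$, I would fix a minimal core $S \subseteq V(G)$ with $|S|\leq s$, set $J = G[S]$, define $\alpha(v) = 1$ iff $v$ is adjacent to every vertex of $V(G)-S$ (well-defined because $V(G)-S$ is a crown), and pick $\beta = 1$ when $G[V(G)-S]$ is complete and $\beta = 0$ when it is edgeless; when $|V(G)-S|\leq 1$ the set is simultaneously complete and edgeless, so either choice is available. The identity map on $S$ then realizes a $\mc{J}$-template in $G$, so $G \in \mc{P}(\mc{J})$. To see that $\mc{J}$ is irreducible, I would suppose for contradiction that some $v \in V(J)$ has $\alpha(v)=\beta$ but no witness $u \in V(J)-\{v\}$ meeting the required adjacency condition, and then verify that $(V(G)-S)\cup\{v\}$ is a crown of $G$: its internal adjacency type is controlled by the equality $\alpha(v)=\beta$, and the absence of a witness is equivalent to saying that each $u \in S-\{v\}$ is adjacent to $v$ iff $\alpha(u)=1$ iff $u$ is adjacent to every vertex of $V(G)-S$. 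This makes $S-\{v\}$ a smaller core, contradicting the minimality of $S$.

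The only real subtlety I anticipate is the degenerate regime $|V(G)-S|\leq 1$, where $\beta$ is not pinned down by $G$; the minimality argument above goes through verbatim for \emph{both} choices of $\beta$, since a one- or two-element subset of $V(G)$ is automatically uniform, so no separate analysis is required. Beyond this, the remaining work is a routine case check matching the two clauses in the definition of irreducibility to the two possible types of crown (clique versus independent set) obtained after adding a single vertex back.
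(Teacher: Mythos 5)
Your proof is correct, and it is exactly the argument the paper has in mind: the paper states this observation without proof (calling it straightforward), and your two directions — induced subgraphs of a graph admitting a $\mc{J}$-template are $s$-stars, and a minimal core of an $s$-star yields an irreducible star system via the identity template, with the degenerate crown of size at most one handled by either choice of $\beta$ — are the intended routine verification, matching the paper's remark that for $|V(G)-Z|\geq 2$ minimal cores correspond to irreducible systems. One small point: you correctly use the intended reading of irreducibility (the witness $u$ satisfies $\alpha(u)=1$ and $uv\notin E(J)$, or $\alpha(u)=0$ and $uv\in E(J)$); the paper's displayed condition contains a typo with $uv\notin E(J)$ in both disjuncts, and your negation-of-witness step is the right interpretation.
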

 
 The speed of $\mc{P}(\mc{J})$ was determined in~\cite{BBW00}. 
 
\begin{lem}[\cite{BBW00}]\label{l:starspeed}
 Let	$\mc{J}=(J,\alpha, \beta)$ be an irreducible star system. Then
 $\h(\mc{P}(\mc{J}),n)=|V(J)|\log{n}+O(1).$
\end{lem}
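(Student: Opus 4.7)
The plan is to prove matching bounds $|\mc{P}(\mc{J})^n| = \Theta(n^{|V(J)|})$. For the upper bound, \cref{o:starsystems} implies that every graph admitting a $\mc{J}$-template is an $s$-star with $s = |V(J)|$; since $s$-stars are closed under taking induced subgraphs (the restriction of a crown of $G$ to $W$ is a crown of $G[W]$), the family $\mc{P}(\mc{J})$ is contained in the class of $s$-stars. I would then count $s$-stars on $[n]$ by choosing a core $Z \subseteq [n]$ of size at most $s$ in $O(n^s)$ ways and specifying $G[Z]$, the map $\alpha_G \colon Z \to \{0,1\}$ encoding which core vertices are adjacent to the entire crown, and $\beta_G \in \{0,1\}$ encoding whether the crown is complete or edgeless, each in constantly many ways. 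This yields $\h(\mc{P}(\mc{J}),n) \leq |V(J)| \log n + O(1)$.

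For the lower bound, I would construct many graphs in $\mc{P}(\mc{J})$ that explicitly admit a $\mc{J}$-template. Each injection $\psi \colon V(J) \to [n]$ produces a unique graph $G_\psi$ on $[n]$ having $\psi$ as a $\mc{J}$-template: setting $Z = \psi(V(J))$, embed $J$ into $G_\psi$ via $\psi$, join each $z \in Z$ to all of $[n] \setminus Z$ iff $\alpha(\psi^{-1}(z)) = 1$, and make $[n] \setminus Z$ a clique or independent set according to $\beta$. Since there are $n(n-1) \cdots (n-|V(J)|+1) = \Omega(n^{|V(J)|})$ such injections, it suffices to bound by a constant (depending only on $\mc{J}$) the number of injections giving the same graph.

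The key claim, and main obstacle, is that for $n \geq 2|V(J)|+2$ the image $Z$ of any $\mc{J}$-template of $G$ is recoverable from $G$ alone. Call $u,v \in V(G)$ \emph{$\beta$-twins} if $N_G[u]=N_G[v]$ (when $\beta=1$) or $N_G(u)=N_G(v)$ (when $\beta=0$); then the crown $[n] \setminus Z$ is a $\beta$-twin class of size $n-|V(J)|$. Any $\beta$-twin class contained in $Z$ has size at most $|V(J)|$, so to conclude that the crown is the unique largest $\beta$-twin class it is enough to rule out a mixed class containing both a core vertex $z = \psi(v)$ and a crown vertex. If $\alpha(v) \neq \beta$, then $z$ already has the wrong adjacency pattern with the crown and is disqualified immediately. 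If $\alpha(v) = \beta$, a short calculation shows that $z$ being $\beta$-twin to a crown vertex would force $N_J[v]$ (or $N_J(v)$, depending on $\beta$) to equal $\{u \in V(J) : \alpha(u) = 1\}$, which is exactly what the irreducibility hypothesis on $\mc{J}$ rules out: the witness $u \in V(J) \setminus \{v\}$ it supplies produces a disagreement between $\psi(u)$'s adjacency with $z$ and with a crown vertex. Given this uniqueness of $Z$, the number of injections $\psi$ producing a fixed $G$ is at most the $|V(J)|!$ bijections $V(J) \to Z$, so $\h(\mc{P}(\mc{J}),n) \geq |V(J)| \log n - O(1)$, matching the upper bound.
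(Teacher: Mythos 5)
The paper does not actually prove this lemma; it is quoted directly from Balogh, Bollob\'as and Weinreich \cite{BBW00}, so there is no in-paper argument to compare against. Judged on its own, your proof is sound and follows the natural route. The upper bound correctly passes through the observation that $\mc{P}(\mc{J})$ is contained in the class of $|V(J)|$-stars (the ``induced subgraphs of'' wrapper in the definition of $\mc{P}(\mc{J})$ is exactly why you need closure of $s$-stars under induced subgraphs, and you handle it), and there are $O(n^{|V(J)|})$ labelled $|V(J)|$-stars on $[n]$. For the lower bound, $\psi \mapsto G_\psi$ is the right construction, and the crux is recovering the core $Z$ as the complement of the unique large $\beta$-twin class; that is precisely where irreducibility must enter, and you invoke it at the right place. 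One point worth flagging: the paper's printed definition of ``irreducible'' contains an evident typo (both alternatives read $uv \notin E(J)$; the second should read $uv \in E(J)$), so that the intended condition is that for each $v$ with $\alpha(v)=\beta$ some $u \ne v$ witnesses that $N_J(v)$ does not coincide with $\{u : \alpha(u)=1\}$. Your argument uses this corrected reading, which is the one that actually rules out a core vertex being a $\beta$-twin of a crown vertex; under the literal printed text the step would not go through. The only cosmetic imprecision is the phrase ``the crown is a $\beta$-twin class'' before uniqueness is established (it is a priori only contained in one), but this does not affect the logic, since you then show no core vertex can join it.
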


Finally, to present the characterization of minimal non-star-like hereditary families from~\cite{Alekseev97} we need the following additional notation.  Given two families of graphs $\mc{F}_1$ and $\mc{F}_2$, let $\mc{F}_1 \vee \mc{F}_2$ denote the family of all graphs  which are disjoint unions of a graph in $\mc{F}_1$ and a graph in 
$\mc{F}_2$. Similarly, let $\mc{F}_1 \wedge \mc{F}_2$ denote the family of joins of graphs in $\mc{F}_1$ with graphs in $\mc{F}_2$.\footnote{For example, $\mc{S} \vee \mc{S} = \mc{S}$ and $\mc{S} \wedge \mc{S}$ is the family of complete bipartite graphs.} Clearly, if $\mc{F}_1$ and $\mc{F}_2$ are hereditary then so are $\mc{F}_1 \vee \mc{F}_2$  and $\mc{F}_1 \wedge \mc{F}_2$.
For a graph family $\mc{F}$, let  $\bar{\mc{F}}$ denote the family of complements of graphs in $\mc{F}$. Let $\mc{M}$ denote the hereditary family of all matchings, i.e. all graphs with maximum component size two.

The next theorem gives the aforementioned characterization of hereditary families with subexponential speed, which together with \cref{l:starspeed} implies \cref{t:SZSpeed}.

\begin{thm}[\cite{Alekseev97,BBW00}]\label{t:BBW}
	For a hereditary family $\mc{F}$ the following are equivalent
	\begin{description}
		\item[(i)] $\mc{F}$ is star-like,
		\item[(ii)] $h(\mc{F},n)=o(n)$,
		\item[(iii)] neither $\mc{F}$, nor $\bar{\mc{F}}$  contains any of the following families: $\mc{C} \vee \mc{C}, \mc{C} \vee \mc{S}, \mc{M}$ and $\mc{C}^{+}$.
	\end{description}
	Moreover, $\mc{F}$  is $s$-star-like if and only if there exist irreducible $s$-star systems $\mc{J}_1,\ldots,\mc{J}_r$ and an integer $n_0$ such that
	$$\mc{F}^n = \cup_{i=1}^r \mc{P}^n(\mc{J}_i)$$ for every $n \geq n_0$.
\end{thm}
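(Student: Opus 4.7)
I will prove the cycle (iii) $\Rightarrow$ moreover $\Rightarrow$ (i) $\Rightarrow$ (ii) $\Rightarrow$ (iii); the middle implication moreover $\Rightarrow$ (i) is immediate from \cref{o:starsystems}. For (i) $\Rightarrow$ (ii), given moreover, note that up to isomorphism there are only finitely many irreducible $s$-star systems, so \cref{l:starspeed} yields $|\mc{F}^n| \le \sum_{i=1}^r |\mc{P}^n(\mc{J}_i)| = 2^{O(\log n)}$, hence $\h(\mc{F},n)=o(n)$. For (ii) $\Rightarrow$ (iii) I argue by contrapositive: each of $\mc{C}\vee\mc{C}$, $\mc{C}\vee\mc{S}$, $\mc{M}$, $\mc{C}^+$ and the four complementary families has speed at least $2^{n-1}$ on labelled vertex set $[n]$ (e.g., $2^{n-1}$ ordered bipartitions $[n]=A\sqcup B$ give distinct members of $\mc{C}\vee\mc{C}$, the $2^{n-1}$ choices of neighborhood for the distinguished extra vertex yield distinct members of $\mc{C}^+$, and $\mc{M}$ grows even faster), so containment of any of these eight families in $\mc{F}$ forces $\h(\mc{F},n)\ge n-1$.

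The heart of the proof is (iii) $\Rightarrow$ moreover. Fix $N$ so large that $\mc{F}$ omits a specific witness graph from each of the eight families; in particular $\mc{F}$ avoids $K_N\cup K_N$, $K_N\cup \bar K_N$, $NK_2$ and their complements, along with explicit asymmetric witnesses $H\in\mc{C}^+\setminus\mc{F}$ and $\bar H$ with $\bar H\notin\mc{F}$ and $\bar H$ in the family complementary to $\mc{C}^+$. Given $G\in\mc{F}$ with $|V(G)|=n$ very large, Ramsey's theorem yields a homogeneous set $Y\subseteq V(G)$ of size $R\gg N$; assume without loss of generality that $Y$ is a clique. Avoidance of $H$ forces each $x\in V(G)\setminus Y$ to have either fewer than $N$ neighbors or fewer than $N$ non-neighbors in $Y$, inducing a partition $V(G)\setminus Y=W_{\text{many}}\sqcup W_{\text{few}}$. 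A clique or independent set of size $N$ inside $W_{\text{few}}$ combines with a subclique of $Y$ of size $\ge N$ (which exists since the common non-neighborhoods have size $\ge R-N^2$) to realize a forbidden $K_N\cup K_N$ or $K_N\cup \bar K_N$, so another application of Ramsey bounds $|W_{\text{few}}|$ by a constant; a dual argument using the complements of these graphs shows that all but boundedly many vertices of $W_{\text{many}}$ are completely adjacent to all of $Y$. Collecting these exceptional vertices, together with the boundedly many vertices of $Y$ that violate the crown condition, into a set $S$ of size at most a constant $s=s(\mc{F})$, we conclude that $V(G)\setminus S$ is a crown of $G$ and that $G$ is an $s$-star, proving (i).

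For the moreover statement, enumerate the finitely many irreducible $s$-star systems and retain those $\mc{J}_1,\dots,\mc{J}_r$ whose $\mc{P}(\mc{J}_i)$ contains arbitrarily large members of $\mc{F}$. By \cref{o:starsystems} every sufficiently large $G\in\mc{F}$ lies in $\bigcup_i \mc{P}^n(\mc{J}_i)$; conversely, any graph $G'\in \mc{P}(\mc{J}_i)$ with sufficiently small crown appears as an induced subgraph of some large member of $\mc{P}(\mc{J}_i)\cap\mc{F}$ obtained by deleting crown vertices, so the hereditary property of $\mc{F}$ gives $\mc{P}^n(\mc{J}_i)\subseteq\mc{F}^n$ for $n\ge n_0$. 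The main obstacle is the iterated Ramsey analysis above: one must calibrate $N$ and $R$ precisely so that each splitting step exhibits one of the eight forbidden graphs, and the asymmetric families $\mc{C}^+$ and its complement are essential because they alone provide the single-vertex dichotomy required to initiate the partition $W_{\text{many}}\sqcup W_{\text{few}}$; without them one could not even reduce to the ``balanced'' Ramsey questions handled by $\mc{C}\vee\mc{C}$, $\mc{C}\vee\mc{S}$ and $\mc{M}$.
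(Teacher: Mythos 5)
You should first note that the paper does not prove \cref{t:BBW} at all: it is quoted from \cite{Alekseev97,BBW00} and only used as a black box (and later extended in \cref{t:critical2}), so there is no in-paper proof to compare against; I therefore assess your argument on its own terms. Your overall strategy (Ramsey plus the eight forbidden configurations) is indeed the Alekseev/Balogh--Bollob\'as--Weinreich route, and the easy implications (ii) $\Rightarrow$ (iii) and the speed bound for (i) $\Rightarrow$ (ii) via \cref{l:starspeed} are fine (for (i) $\Rightarrow$ (ii) you only need the inclusion $\mc{F}^n \subseteq \cup_i \mc{P}^n(\mc{J}_i)$, which follows from \cref{o:starsystems}, so the apparent circular appeal to the ``moreover'' statement is avoidable). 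The problems are in the two harder parts.

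First, in the heart of (iii) $\Rightarrow$ (i) your final step does not deliver a crown. Recall that $V(G)-S$ is a crown only if (a) $G[V(G)-S]$ is complete or edgeless and (b) \emph{every} vertex of $G$, including every vertex of $S$, is complete or anticomplete to $V(G)-S$. Your argument controls only the bipartite structure between $Y$ and $V(G)\setminus Y$: it bounds $|W_{\text{few}}|$ and shows most of $W_{\text{many}}$ is complete to $Y$, but it says nothing about the edges \emph{inside} $W_{\text{many}}$ (needed for (a); e.g.\ $W_{\text{many}}$ could a priori split into two large anticomplete cliques, and ruling such configurations out requires further applications of the forbidden families, typically a Ramsey argument on disjoint non-adjacent pairs using $\mc{M}$, its complement, $\mc{C}\vee\mc{C}$ and $\mc{C}\wedge\mc{C}$ to show the complement of $G[W_{\text{many}}]$ has bounded vertex cover), and nothing about (b): bounding the \emph{number} of exceptional vertices does not make them uniform to the remaining set, and the phrase ``the boundedly many vertices of $Y$ that violate the crown condition'' is an unproved assertion. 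One must additionally argue (e.g.\ by moving, for each exceptional vertex, its at most $N-1$ offending neighbours or non-neighbours in the candidate crown into the core, and checking this cleanup terminates with every core vertex uniform to what remains), and this is precisely where the real work of \cite{BBW00} lies. Second, the ``moreover'' direction is flawed as stated: retaining those systems $\mc{J}_i$ for which $\mc{P}(\mc{J}_i)$ contains arbitrarily large members of $\mc{F}$ does not give $\mc{P}^n(\mc{J}_i)\subseteq \mc{F}^n$, because a large member of $\mc{P}(\mc{J}_i)\cap\mc{F}$ may be a degenerate member missing core vertices and hence need not contain a given $G'\in\mc{P}^n(\mc{J}_i)$ as an induced subgraph. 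Concretely, let $\mc{F}$ consist of all edgeless graphs together with all stars on at most $10$ vertices; for the $1$-star system $\mc{J}$ with a single dominating centre, $\mc{P}(\mc{J})\cap\mc{F}$ contains arbitrarily large (edgeless) graphs, yet large stars are not in $\mc{F}$. The correct selection criterion is that $\mc{F}$ contains graphs admitting a full $\mc{J}_i$-template with arbitrarily large crown; with that choice heredity does give the reverse inclusion, and the forward inclusion follows because any non-retained system admits only boundedly large template graphs in $\mc{F}$.
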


The main application of \cref{t:critical} in this paper is an 
extension of \cref{t:BBW}.  We start by presenting the necessary generalizations of definitions used in the statement of \cref{t:BBW}.
 
First, we extend the notion of a star-like family. 
We say that a family $\mc{F}$ is \emph{$(l,s)$-critical} if $\chi_c(\mc{F})=l$ and $\red(\mc{F})$ is $s$-star-like, i.e. every sufficiently large $\mc{F}$-reduced graph is an $s$-star. We say that a family is  $\mc{F}$ is \emph{critical} if it is $(l,s)$-critical for some $l,s$, and  $\mc{F}$ is \emph{non-critical}, otherwise.

Next we need to generalize the definition of star-systems.
We define an \emph{$(l,s)$-constellation} (or simply a \emph{constellation}) to be a quadruple $\mc{J}=(J,\phi,\alpha, \beta)$, where $J$ is a (possibly empty) graph, and $\phi,\alpha, \beta$ are functions such that $\phi: V(J) \to [l]$ satisfies $|\phi^{-1}(i)| \leq s$ for every $i \in [l]$,  $\alpha : V(J) \to \{0,1\}$ and $ \beta:[l] \to \{0,1\}$. For $i \in [l]$ define  $\mc{J}_i=(J[\phi^{-1}(i)],\alpha|_{\phi^{-1}(i)},\beta(i))$. Thus $\mc{J}_i$ is an $s$-star system.

Given an $(l,s)$-constellation $\mc{J}=(J,\phi,\alpha, \beta)$, define a \emph{$\mc{J}$-template} in a graph $G$ to be a tuple $(\psi,X_1,X_2, \ldots,X_l)$ such that  
\begin{itemize}
	\item $(X_1,X_2, \ldots,X_l)$ is a partition of $V(G)$,
	\item $\psi:V(J) \to V(G)$ is an embedding satisfying $\psi(v) \in X_{\phi(v)}$ for every $v \in V(J)$,
\end{itemize}
and, denoting the image of $\psi$ by $Z$, we have
\begin{itemize}	
	\item if $\alpha(v)=1$ then $\psi(v)$ is adjacent to every vertex in $X_{\phi(v)} - Z$  in $G$, and, otherwise,
	$\psi(v)$ is adjacent to no vertex in $X_{\phi(v)} - Z$, and,
	\item if $\beta(i)=1$ then $X_i - Z$ is a clique in $G$, and, otherwise,  $X_i -Z$ is an independent set.
\end{itemize}
Thus, in particular,  $\psi|_{\phi^{-1}(i)}$ is a $\mc{J}_i$-template in $G[X_i]$, implying that  $G[X_i]$ is an $s$-star. It follows $(1,s)$-constellations are in bijection with $s$-star systems, and this bijection carries over to templates. We say that a constellation $\mc{J}$ is  \emph{irreducible} if $\mc{J}_i$ is irreducible for every $i \in [l]$. 
As before, let $\mc{P}(\mc{J})$ denote the family of induced subgraphs of all graphs which admit a $\mc{J}$-template. 

We are now ready to state our extensions of  \cref{l:starspeed}  and \cref{t:BBW}.

\begin{restatable}{lem}{constellationspeed}\label{l:constellationspeed} Let
$\mc{J}=(J,\phi,\alpha, \beta)$ be an irreducible $(l,s)$-constellation. Then
	$\h(\mc{P}(\mc{J}),n)-\h(\mc{H}(l),n)=|V(J)|\log{n}+O(1).$
\end{restatable}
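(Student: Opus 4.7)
My plan is to identify $\mc{P}(\mc{J})$ with $\mc{P}(\mf{T})$, where $\mf{T} := (\mc{T}_1,\ldots,\mc{T}_l)$ and $\mc{T}_i := \mc{P}(\mc{J}_i)$, and then estimate $|\mc{P}(\mf{T})^n|$ by counting over $\mf{T}$-partitions. The identification holds because any $\mc{J}$-template on $G$ restricts to $\mc{J}_i$-templates on the induced subgraphs $G[X_i]$, and conversely $\mc{J}_i$-templates on the parts of any $\mf{T}$-partition of $G$ extend to a $\mc{J}$-template on a graph having $G$ as an induced subgraph. Writing $a_i := |\phi^{-1}(i) \cap V(J)|$, \cref{l:starspeed} yields $|\mc{T}_i^m| = \Theta(m^{a_i})$.

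For the upper bound, every $G \in \mc{P}(\mc{J})^n$ admits at least one $\mf{T}$-partition, so
\[
|\mc{P}(\mc{J})^n| \leq \sum_{n_1+\cdots+n_l=n} \binom{n}{n_1,\ldots,n_l} \prod_{i=1}^{l} |\mc{T}_i^{n_i}| \cdot 2^{\sum_{i<j}n_i n_j}.
\]
Bounding $\prod_i |\mc{T}_i^{n_i}| = O\bigl(\prod_i n_i^{a_i}\bigr) \leq O(n^{|V(J)|})$ reduces the right-hand side to $O(n^{|V(J)|}) \cdot \Sigma$, where
\[
\Sigma := \sum_{(n_i)} \binom{n}{n_1,\ldots,n_l} \, 2^{\sum_{i<j}n_i n_j}
\]
counts pairs (graph, ordered $(\mc{S},l)$-partition). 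Applying \cref{c:uniqueness} to the all-$\mc{S}$ sequence shows that almost every graph in $\mc{H}(l)$ has a unique unordered $(\mc{S},l)$-partition and hence exactly $l!$ ordered ones; therefore $\Sigma = 2^{\h(\mc{H}(l),n)+O(1)}$, which gives $\h(\mc{P}(\mc{J}),n) - \h(\mc{H}(l),n) \leq |V(J)|\log n + O(1)$.

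For the matching lower bound I would construct many distinct graphs in $\mc{P}(\mc{J})^n$ directly. Fix balanced partition sizes $(n_i)$ with $n_i \in \{\lfloor n/l\rfloor, \lceil n/l\rceil\}$; for each ordered partition $(X_i)$ and each injection $\psi : V(J) \to [n]$ with $\psi(\phi^{-1}(i)) \subseteq X_i$, the $\mc{J}$-template determines $G[X_i]$ entirely, while the between-part edges of $G$ outside the $O(1)$ edges of $J$ between parts can be chosen freely. The number of such (partition, embedding, between-part edge set) triples is
\[
\binom{n}{n_1,\ldots,n_l} \cdot \prod_{i=1}^{l} (n_i)_{a_i} \cdot 2^{\sum_{i<j} n_i n_j - O(1)}.
\]
Each resulting graph arises from only $O(1)$ triples: \cref{c:uniqueness} applied to $\mf{T}$ (each $\mc{T}_i$ being star-like hence thin, easily checked to be extendable, and apex-free since its dangerous family clearly contains a substar and an antisubstar) forces the $\mf{T}$-partition to be unique for almost all such graphs, and irreducibility of each $\mc{J}_i$ restricts the valid embeddings within $X_i$ to the $O(1)$-sized orbit under the automorphism group of $\mc{J}_i$. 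Combining this with the estimate $\h(\mc{H}(l),n) = \log \binom{n}{n_1,\ldots,n_l} + \sum_{i<j}n_i n_j + O(1)$ for balanced $(n_i)$ yields $\h(\mc{P}(\mc{J}),n) - \h(\mc{H}(l),n) \geq |V(J)|\log n - O(1)$.

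The main obstacle I anticipate is the lower bound's uniqueness argument, particularly verifying the hypotheses of \cref{c:uniqueness} for $\mc{T}_i = \mc{P}(\mc{J}_i)$. Should the corollary fail to apply cleanly in some edge case, one can argue uniqueness directly from the rigid $s$-star structure: a typical between-part edge set makes vertices in different parts have essentially independent adjacency patterns, so the partition can be recovered vertex-by-vertex, after which the irreducibility of each $\mc{J}_i$ pins down the embedding within each part up to a bounded group action.
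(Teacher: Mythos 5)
Your approach is in the right spirit and is essentially the same decomposition the paper uses: write $\mc{T}_i := \mc{P}(\mc{J}_i)$, get $\h(\mc{T}_i,m) = a_i\log m + O(1)$ from \cref{l:starspeed}, and deduce the $|V(J)|\log n + O(1)$ increment over $\h(\mc{H}(l),n)$. The paper, however, routes this through the pre-packaged \cref{l:decomposespeed} (after noting $\mc{P}(\mc{J})\subseteq\mc{P}(\mf{T})$ and $|\mc{P}^n(\mc{J})|\geq\eps|\mc{P}^n(\mf{T})|$), whereas you re-derive that lemma's content by hand. That is a legitimate alternative, but your re-derivation has a genuine gap in the step where you invoke \cref{c:uniqueness}.

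The problem is that \cref{c:uniqueness} is an ``almost all graphs'' statement: it says a $(1-o(1))$ fraction of graphs in $\mc{P}(\mf{T})$ (resp.\ $\mc{H}(l)$) have a unique $\mf{T}$-partition. This does \emph{not} by itself justify your claim $\Sigma = 2^{\h(\mc{H}(l),n)+O(1)}$, because $\Sigma$ is a sum of multiplicities, and the $o(1)$-fraction of exceptional graphs can each have up to $l^n$ ordered partitions. The same issue affects your lower-bound multiplicity bound: fixing a balanced $(X_i)$ and $\psi$, you need that almost all of \emph{those particular} extensions have a unique partition, not that almost all graphs in $\mc{P}(\mf{T})$ do. The correct tool is \cref{l:uniqueness2}, which is a per-extension statement (for a fixed tuple $(G_1,\ldots,G_l)$, almost every extension has the prescribed partition as its unique $(\mc{T},l)$-partition); this is exactly what the paper uses inside the proof of \cref{l:decomposespeed}. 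With \cref{l:uniqueness2} in place of \cref{c:uniqueness}, your counting goes through: one shows $|\mc{H}(\mc{X})| = (1-o(1))2^{h(\mc{X})}$ for each balanced $\mc{X}$ (so $|\mc{H}(l)^n| = \Theta(\binom{n}{n_1,\ldots,n_l}2^{\sum_{i<j}n_in_j})$, which is $\Theta(\Sigma)$ after a direct Gaussian-type estimate on $\Sigma$), and similarly $|\mc{F}(\mc{J},\psi,\mc{X})| = (1-o(1))2^{h(\mc{X})-O(1)}$. Your verification that the $\mc{T}_i$'s are extendable meager families, and the observation that irreducibility pins down the embedding within each part up to a bounded group action, are both correct and needed; the final ``fallback'' sketch you give for recovering the partition vertex-by-vertex is in fact the mechanism behind \cref{l:uniqueness2}.
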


\begin{restatable}{thm}{criticaltwo}\label{t:critical2} Let $\mc{F}$ be a hereditary family, and let $l = \chi_c(\mc{F})$. Then  the following are equivalent
	\begin{description}
		\item[(i)] $\mc{F}$ is critical,
		\item[(ii)] $h(\mc{F},n)-h(\mc{H}(l),n)=o(n)$,
		\item[(iii)] for any choice of hereditary families $\mc{F}_1,\mc{F}_2,\ldots,\mc{F}_l$ such that \begin{itemize} \item either $\mc{F}_1$ or $\bar{\mc{F}}_1$ is one of the families $\mc{C} \vee \mc{C}, \mc{C} \vee \mc{S}, \mc{M}$, $\mc{C}^{+}$,  and \item  $\mc{F}_i \in \{\mc{C},\mc{S}\}$ for every $2 \leq i \leq l$, \end{itemize} we have $$\mc{P}(\mc{F}_1,\ldots,\mc{F}_l) \not \subseteq  \mc{F}.$$
	\end{description}
Moreover, if $\mc{F}$ is $(l,s)$-critical then there exist irreducible $(l,s)$-constellations $\mc{J}_1,\ldots,\mc{J}_r$ such that $\cup_{i=1}^r \mc{P}^n(\mc{J}_i) \subseteq \mc{F}$, and almost every graph in $\mc{F}$ lies in $\cup_{i=1}^r \mc{P}^n(\mc{J}_i)$.
\end{restatable}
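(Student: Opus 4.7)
The plan is to combine \cref{t:critical} with the Alekseev--Balogh--Bollob\'as--Weinreich theorem (\cref{t:BBW}) applied to the thin family $\red(\mc{F})$. I would establish the equivalences (i)--(iii) first, since most of them reduce to \cref{t:BBW}, and then attack the moreover clause, which is where the real work lies.

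For (i) $\Leftrightarrow$ (iii): if $\mc{F}$ is $(l,s)$-critical then $\red(\mc{F})$ is $s$-star-like, hence by \cref{t:BBW}(iii) neither $\red(\mc{F})$ nor its complement contains $\mc{C}\vee\mc{C}$, $\mc{C}\vee\mc{S}$, $\mc{M}$, or $\mc{C}^{+}$. A failure of (iii) would yield $\mc{P}(\mc{F}_1,\mc{F}_2,\ldots,\mc{F}_l)\subseteq\mc{F}$ with $\mc{F}_1$ or $\bar{\mc{F}}_1$ one of those families; letting $s'$ be the number of indices $i\geq 2$ with $\mc{F}_i=\mc{S}$, we obtain $\mc{P}(\iota(H),\mc{H}(s',l-1-s'))\subseteq\mc{F}$ for every $H\in\mc{F}_1$, forcing $\mc{F}_1\subseteq\red(\mc{F})$, a contradiction. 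The converse runs in reverse. For (ii) $\Rightarrow$ (iii) I take the contrapositive: a failure of (iii) produces $\mc{P}(\mc{F}_1,\mc{S}^{s'},\mc{C}^{l-1-s'})\subseteq\mc{F}$ with $\mc{F}_1$ non-star-like, so $h(\mc{F}_1,n)$ is not $o(n)$ by \cref{t:SZSpeed}, and a standard counting on balanced $l$-partitions with arbitrary cross-edges gives $h(\mc{F},n)-h(\mc{H}(l),n)=\Omega(n)$. The direction (i) $\Rightarrow$ (ii) falls out from the moreover clause combined with \cref{l:constellationspeed}: almost all of $\mc{F}^n$ lies in $\cup_i\mc{P}^n(\mc{J}_i)$, each summand satisfies $h(\mc{P}(\mc{J}_i),n)-h(\mc{H}(l),n)=O(\log n)$, so $h(\mc{F},n)-h(\mc{H}(l),n)=O(\log n)=o(n)$.

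For the moreover clause, the case $l=1$ reduces directly to \cref{t:BBW}, so assume $l\geq 2$. First observe that $\mc{F}$ is apex-free, since substars such as $K_{1,k}\cup pK_{1}$ with $\min(k,p)>s$ are not $s$-stars and hence lie in $\dang(\mc{F})$, and similarly for antisubstars. Applying the moreover clause of \cref{t:BBW} to $\red(\mc{F})$ produces irreducible $s$-star systems $\mc{J}'_1,\ldots,\mc{J}'_{r'}$ with $\red(\mc{F})^n=\cup_{j}\mc{P}^{n}(\mc{J}'_j)$ for $n\geq n_0$. Enumerate all $(l,s)$-constellations $\mc{J}=(J,\phi,\alpha,\beta)$ whose restrictions $\mc{J}_1,\ldots,\mc{J}_l$ are isomorphic to members of $\mc{J}'_1,\ldots,\mc{J}'_{r'}$; this is a finite list because $|V(J)|\leq ls$. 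Retain those with $\mc{P}(\mc{J})\subseteq\mc{F}$ and relabel them $\mc{J}_1,\ldots,\mc{J}_r$. Choose a finite $\mc{K}\subseteq\dang(\mc{F})$ containing a substar, an antisubstar (available by apex-freeness), and additional certificates so that $\mc{T}=\Forb(\mc{K})$ is an extendable thin family containing $\red(\mc{F})$ with $\mc{T}\cap\mc{F}^{n}$ pinned, up to a lower-order remainder, to $\cup_j\mc{P}^{n}(\mc{J}'_j)$. \Cref{l:cleanext} then yields smoothness of $\mc{P}(\mc{T},l)\cap\mc{F}$, and \cref{t:critical} produces a $(\mc{T},l)$-partition $(X_1,\ldots,X_l)$ for almost every $G\in\mc{F}$. \Cref{l:uniqueness1,l:uniqueness2} yield uniqueness and balance of this partition, so each $G[X_i]$ is an $s$-star in some $\mc{P}(\mc{J}'_{j_i})$ with a unique minimal core $Z_i$; combining these $\mc{J}'_{j_i}$-templates with the graph $J=G[Z]$ on $Z=\cup_i Z_i$ yields a $\mc{J}$-template in $G$, placing $G\in\mc{P}(\mc{J})$ for some enumerated candidate $\mc{J}$.

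The main obstacle is ensuring that almost all realized candidates $\mc{J}$ are \emph{good}, i.e., satisfy $\mc{P}(\mc{J})\subseteq\mc{F}$. For any bad candidate there exists a minimal $H\in\mc{P}(\mc{J})\setminus\mc{F}$, and forbidding $H$ inside a $\mc{J}$-template imposes a positive-density constraint on the cross-edges (which are otherwise free in the definition of $\mc{P}(\mc{J})$), so via a counting that parallels the proof of \cref{l:constellationspeed} one obtains $h(\mc{P}(\mc{J})\cap\mc{F},n)\leq h(\mc{P}(\mc{J}),n)-\omega(\log n)$. Summed over the finitely many bad candidates, their total contribution is $o(2^{h(\mc{F},n)})$. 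Synchronizing the choice of $\mc{K}$ with this counting and establishing the cross-edge exclusion cleanly is where the technical difficulty concentrates; everything else is machinery supplied by \cref{t:critical,t:BBW} together with \cref{l:cleanext,l:uniqueness1,l:uniqueness2}.
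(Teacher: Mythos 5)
Your overall route is the same as the paper's (reduce to \cref{t:BBW} for the equivalences, use \cref{t:critical} plus the uniqueness lemmas for the moreover clause, and kill ``bad'' constellations by a cross-edge counting), but there is a genuine gap at the pivotal step: verifying the smoothness hypothesis of \cref{t:critical}. You assert that \cref{l:cleanext} yields smoothness of $\mc{P}(\mc{T},l)\cap\mc{F}$, but \cref{l:cleanext} only gives smoothness of $\mc{P}(\mc{T}_1,\ldots,\mc{T}_l)$ itself; its proof extends a graph by a new vertex joined \emph{arbitrarily} to the other parts, and such an extension has no reason to stay inside $\mc{F}$ (it can create a dangerous configuration), so smoothness does not pass to the intersection no matter how cleverly you choose $\mc{K}$. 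In the paper this is exactly where the ``bad constellation'' counting enters: one first proves (\cref{l:starpartition}) that almost every graph of $\mc{P}(\mc{T},l)\cap\mc{F}$ lies in $\mc{F}_*$, the union of the families $\mc{P}(\mc{J})$ over irreducible constellations with $\mc{P}(\mc{J})\subseteq\mc{F}$, using the fixed-partition count $|\mc{F}(\mc{J},\psi)|\leq 2^{h(\mc{X})}(1-2^{-|V(H)|^2})^{n/(2l|V(H)|)}$ together with \cref{l:uniqueness1,l:uniqueness2}; then each $\mc{P}(\mc{J})$ is smooth (\cref{l:starext}), hence so is $\mc{F}_*$, and smoothness transfers to $\mc{P}(\mc{T},l)\cap\mc{F}$ because the two families have almost the same size. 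So the bad-constellation argument is not a final clean-up to be ``synchronized with the choice of $\mc{K}$'' after \cref{t:critical} has been applied --- it is the ingredient that makes \cref{t:critical} applicable in the first place, and leaving it as an acknowledged obstacle leaves the main implication unproved. (Also note the paper simply takes $\mc{T}=\red(\mc{F})=\Forb(\mc{K})$ via \cref{c:generatestars}; no ``pinning'' of $\Forb(\mc{K})\cap\mc{F}$ is needed.)

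Two smaller points in the equivalences. In (ii)$\Rightarrow$(iii) your ``standard counting on balanced $l$-partitions'' implicitly needs uniqueness of the partition to control overcounting (this is \cref{l:decomposespeed}, which requires the parts to be \emph{meager}); the families $\mc{C}^{+}$ and $\mc{S}^{+}$ are not meager, and the paper handles them separately by writing $\mc{P}(\mc{C}^{+},\mc{F}_2,\ldots,\mc{F}_l)=\bigl(\mc{P}(\mc{C},\mc{F}_2,\ldots,\mc{F}_l)\bigr)^{+}$ and comparing with the smoothness of $\mc{H}(l)$. In (iii)$\Rightarrow$(i), ``the converse runs in reverse'' hides a real issue: the witness $s$ in the definition of $\mc{F}$-reduced may vary with the graph, so non-star-likeness of $\red(\mc{F})$ does not directly give one tuple $(\mc{F}_1,\ldots,\mc{F}_l)$ violating (iii); the paper fixes this by decomposing $\red(\mc{F})=\cup_{i}\red(\mc{F},i)$, noting some $\red(\mc{F},i)$ is non-star-like, and applying \cref{t:BBW} to that fixed $i$.
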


Note that \cref{l:starspeed}  and \cref{t:critical2} imply  \cref{t:Speed}, as promised, and that  \cref{t:BBW} corresponds to the case $l=1$ of \cref{t:critical2}, except that the last conclusion of  \cref{t:BBW} is weakened. We prove \cref{l:starspeed} and \cref{t:critical2} in \cref{s:applications}.

\vskip 10pt

We finish this section by comparing a case $s=0$ of \cref{t:critical2} with the results of \cite{BB11}. Note that for every $(l,0)$-constellation $\mc{J}$ there exists an integer $0 \leq s \leq l$ such that $\mc{P}(\mc{J}) = \mc{H}(s,l-s)$. Thus  \cref{t:critical} in this case implies the following.

\begin{cor}\label{c:BB11} Let  $\mc{F}$ be an $(l,0)$-critical hereditary family. Then for almost all $G \in \mc{F}$ we have $G  \in \mc{H}(s,l-s)$ for some $0 \leq s \leq l$ such that  $\mc{H}(s,l-s) \subseteq \mc{F}$.
\end{cor}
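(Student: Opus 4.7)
The plan is to deduce \cref{c:BB11} directly from the ``moreover'' conclusion of \cref{t:critical2}. Since $\mc{F}$ is $(l,0)$-critical, that theorem provides irreducible $(l,0)$-constellations $\mc{J}_1,\ldots,\mc{J}_r$ with $\bigcup_{k=1}^r \mc{P}(\mc{J}_k) \subseteq \mc{F}$, and with the property that almost every graph in $\mc{F}$ lies in $\bigcup_{k=1}^r \mc{P}^n(\mc{J}_k)$. So the entire task reduces to identifying each $\mc{P}(\mc{J}_k)$ with a family of the form $\mc{H}(s,l-s)$.

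To carry this out I would unpack the definition of an $(l,0)$-constellation $\mc{J}_k=(J_k,\phi_k,\alpha_k,\beta_k)$. The defining condition $|\phi_k^{-1}(i)|\le 0$ for every $i\in[l]$ forces $V(J_k)=\emptyset$, so $J_k$ is empty and the data $(\phi_k,\alpha_k)$ is vacuous; only $\beta_k:[l]\to\{0,1\}$ carries any content. A $\mc{J}_k$-template in a graph $G$ then reduces to a partition $(X_1,\ldots,X_l)$ of $V(G)$ in which $X_i$ is a clique if $\beta_k(i)=1$ and an independent set if $\beta_k(i)=0$. Setting $s_k=|\beta_k^{-1}(0)|$ and using the invariance of $\mc{H}(s,l-s)$ under permutation of its parts, we obtain $\mc{P}(\mc{J}_k)=\mc{H}(s_k,l-s_k)$ (both families are hereditary, so passing to induced subgraphs in the definition of $\mc{P}(\mc{J}_k)$ does not enlarge it). Combined with $\mc{P}(\mc{J}_k)\subseteq\mc{F}$, this gives $\mc{H}(s_k,l-s_k)\subseteq\mc{F}$ for each $k$, and \cref{c:BB11} follows at once: almost every $G\in\mc{F}$ belongs to $\mc{H}(s_k,l-s_k)$ for some $k$, and each such family is contained in $\mc{F}$.

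There is no real obstacle in this derivation; the content of the corollary is entirely absorbed into \cref{t:critical2}, whose proof is carried out in \cref{s:applications}. The only nontrivial point is purely definitional -- matching the notion of a $\mc{J}$-template when $J$ is empty against the definition of $\mc{H}(s,l-s)$ -- and this is immediate from unpacking the two definitions side by side.
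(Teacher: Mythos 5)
Your proposal is correct and is essentially the paper's own derivation: the paper notes that every $(l,0)$-constellation $\mc{J}$ satisfies $\mc{P}(\mc{J})=\mc{H}(s,l-s)$ for some $0\le s\le l$ (exactly your unpacking of the empty graph $J$ and the map $\beta$) and then invokes the ``moreover'' clause of the constellation structure theorem, just as you do. The only discrepancy is immaterial: the paper's text cites \cref{t:critical} at this point, but the statement actually being used is the final conclusion of \cref{t:critical2}, which is what you correctly apply.
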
 

In~\cite{BB11} Balogh and Butterfield, define a graph $H$ to be \emph{critical} if  every sufficiently large $\Forb(H)$-reduced graph is complete or edgeless. Thus $H$ is critical if and only if $\Forb(H)$ is $(l,0)$-critical, where $l = \chi_c(\Forb(H))$. They prove the following.
   
\begin{thm}[\cite{BB11}]\label{t:BB11} Let $H$ be  a critical graph, and let $l=\chi_c(\Forb(H))$. Then for almost all $G \in \Forb(H)$ we have $G  \in \mc{H}(s,l-s)$ for some $0 \leq s \leq l$ such that  $H \not \in \mc{H}(s,l-s)$.
\end{thm}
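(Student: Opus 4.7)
The plan is to derive Theorem~\ref{t:BB11} directly from \cref{c:BB11}; essentially all the work has already been done, and what remains is to match the Balogh-Butterfield notion of a critical graph to the paper's notion of an $(l,0)$-critical family, together with one bookkeeping observation about $\mc{H}(s,l-s)$.

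First I would verify that $\mc{F} := \Forb(H)$ is $(l,0)$-critical in the sense of Section~\ref{s:critical}. By hypothesis on $H$, every sufficiently large graph in $\red(\mc{F})$ is complete or edgeless. Unpacking definitions, a graph $G$ is a $0$-star iff its core has size $0$, iff $V(G)$ itself is a crown, iff (by the footnote to the definition of crown) $G$ is complete or edgeless. Hence $\red(\mc{F})$ is $0$-star-like, and since $\chi_c(\mc{F}) = l$ by assumption, $\mc{F}$ is $(l,0)$-critical.

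Next, \cref{c:BB11} applied to $\mc{F}$ gives: for almost all $G \in \Forb(H)$, one has $G \in \mc{H}(s,l-s)$ for some $0 \leq s \leq l$ with $\mc{H}(s,l-s) \subseteq \Forb(H)$. The last step is to show that, for each such $s$, the condition $\mc{H}(s,l-s) \subseteq \Forb(H)$ is equivalent to $H \notin \mc{H}(s,l-s)$. The family $\mc{H}(s,l-s)$ is hereditary: any induced subgraph of a graph partitioned into $s$ stable sets and $l-s$ cliques inherits such a partition by restriction. Therefore, if $H \notin \mc{H}(s,l-s)$, no graph in $\mc{H}(s,l-s)$ can contain $H$ as an induced subgraph, so $\mc{H}(s,l-s) \subseteq \Forb(H)$. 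Conversely $\mc{H}(s,l-s) \subseteq \Forb(H)$ trivially forces $H \notin \mc{H}(s,l-s)$ because $H \notin \Forb(H)$. Substituting this equivalence into the conclusion above gives Theorem~\ref{t:BB11}.

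There is no real obstacle here; the substantive content has been absorbed into \cref{t:critical} and its corollary \cref{c:BB11}. The only point requiring any care is the definitional check in the first step, namely confirming that Balogh-Butterfield criticality of $H$ is exactly the statement that $\red(\Forb(H))$ is $0$-star-like, and not some subtly different condition (for instance, one might worry about the bounded exceptions allowed in the definition of "$s$-star-like," but those are built into both definitions in the same way).
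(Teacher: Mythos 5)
Your proposal is correct and follows exactly the route the paper takes: the paper simply asserts that \cref{t:BB11} follows from \cref{c:BB11}, and your two checks (that Balogh--Butterfield criticality of $H$ means $\Forb(H)$ is $(l,0)$-critical, and that $\mc{H}(s,l-s)\subseteq\Forb(H)$ is equivalent to $H\notin\mc{H}(s,l-s)$ since $\mc{H}(s,l-s)$ is hereditary) are precisely the definitional bookkeeping that the paper leaves implicit.
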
 

Clearly, \cref{t:BB11} follows from \cref{c:BB11},\footnote{We suspect that  the proof of \cref{t:BB11} in \cite{BB11} can be easily used to establish \cref{c:BB11}}, as we alluded in the abstract.
  
\section{Typical structure  via Kolmogorov complexity: Proof of \cref{t:critical}.}\label{s:structureproof}

This section contains the proof of \cref{t:critical} and Lemmas~\ref{l:uniqueness1} and~\ref{l:uniqueness2}. As mentioned in the introduction we present our counting arguments in the language of  Kolmogorov complexity. 

\subsection{Brief motivation}

First, let us briefly motivate our choice of Kolmogorov complexity as the language, which is unusual in the area.

Let $\mc{F}$ be a hereditary family the typical structure of which we'd like to establish. That is our goal is to show that almost all graphs in $\mc{F}$ belong to some explicitly given ``structured" $\mc{F}_*$ subfamily of $\mc{F}$, or equivalently \begin{equation}
\label{e:almostall}h(\mc{F},n)-h(\mc{F}-\mc{F}_*,n) \underset{n \to \infty}{\rightarrow} \infty.
\end{equation}
Establishing (\ref{e:almostall}) typically involves proving upper bounds on $h(\mc{F}-\mc{F}_*,n)$ by exhibiting a compact encoding of the graphs in $(\mc{F}-\mc{F}_*)^n$. 

Here is a simple example of an argument of this type. Let $\mc{F}$ is a smooth hereditary family with $\chi_c(\mc{F}) \geq 2$. We claim that almost all graphs in $G \in \mc{F}$ have minimum degree at least $(1-\eps)|V(G)|/4$ for every fixed $\eps > 0$. Indeed, let $\mc{F}'$ be the family of graphs in $ \mc{F}$ which don't have this property. Then every graph $G \in (\mc{F}')^n$ with a vertex $i$ of degree less than $(1-\eps)n/4$ can be encoded as a triple of $(G \setminus i, i, N_G(i))$, where $N_G(i)$ denotes the set of neighbors of $i$. As $G \setminus i$ can be considered as a graph in $\mc{F}^{n-1}$  the number of such encodings gives a bound on there are at most \begin{align*} 2&^{h(\mc{F},n-1)}\cdot n \cdot\left(\sum_{0 \leq j < (1-\eps)n/4}\binom{n}{j}\right) \\ &= 2^{h(n-1)+(-(1-\eps)/4\cdot\log((1-\eps)/4) -(3+\eps)/4\cdot\log((3+\eps)/4))n+o(n)} \\&= 2^{h(\mc{F},n)-\Omega(n)},\end{align*}      
where the first equality uses standard approximation of the binomial coefficient, and the second equality holds as $\mc{F}$ smooth, demonstrating a typical way we exploit the smoothness of the families under consideration. It follows that $h(\mc{F}',n)=h(\mc{F},n)-\Omega(n)$, establishing (\ref{e:almostall}) in our example. 

In our proofs, we repeatedly use  encoding arguments as above. To allow application of the intermediate results without keeping track of the nature of encodings we use, we keep track of a single parameter: the length of the shortest  encoding a given graph. Kolmogorov complexity is the formal way of defining  such length and thus is extremely convenient for our purposes.

\subsection{Basics of Kolmogorov complexity}    

In this subsection we  introduce the necessary terminology and giving a quick overview of the properties of Kolmogorov complexity. 

Let $\mf{A}$ be a finite alphabet such that $\{0,1\} \subseteq \mf{A}$. It will be additionally convenient to assume that the comma symbol ``," is an element of $\mf{A}$. Let $\mf{A}^*$ denote the set of finite sequences of elements from $\mf{A}$, called \emph{strings}, and we refer  the set of finite sequences of zeros and ones as  \emph{binary strings}.

Fix an arbitrary universal Turing machine $U$ which accepts strings in $\mf{A}^*$ as inputs. The \emph{Kolmogorov complexity $C(X)$} of a string $X \in \mf{A}^* $ is defined as the minimum length of a binary string $B$ such that $U(B) = X$,  i. e. $U$ outputs $X$ given $B$ as an input. Intuitively, $C(X)$ is the length of the shortest description of $X$, and this intuition is largely sufficient for our purposes.  Given $X, Y \in \mf{A}^*$ the \emph{conditional complexity of $X$ given $Y$}, denote by $C(X|Y)$ is the minimal length of a binary string $B$ such that $U(B,Y)=X$.\footnote{Here $B,Y$ denotes a string in  $\mf{A}^*$ obtained by concatenating $B$, the comma symbol, and $Y$.} 

We will need the following basic and hopefully natural properties of Kolmogorov complexity, proved e.g. in~\cite{LiVi08}. We also refer the reader to~\cite{LiVi08} for more detailed discussion of the concept.
 
\begin{obs}\label{o:Kolmogorov1}
	\begin{description}
		\item[(C0)] $C(X) \leq |X|+O(1)$ for every binary string $X$.
		\item[(C1)] $C(X,Y)\leq C(X)+C(Y) + \log C(Y) + o(\log C(Y))$ for all strings $X$ and $Y$. In particular, $C(X) \leq C(X|Y)+C(Y)+ \log C(Y)  + o(\log C(Y))$.
		\item[(C2)] For every computable function $f$ there exists a constant $c_f$ such that $C(f(X))\leq C(X)+c_f$.	
	\end{description}
\end{obs}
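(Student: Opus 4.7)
These three properties are standard facts from Kolmogorov complexity theory, and the plan is to verify each by exhibiting an explicit constant-length program for the universal Turing machine $U$. Throughout, ``shortest description'' means a binary string of length $C(\cdot)$ on which $U$ outputs the given string.

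For (C0), I would fix once and for all a short program $P_0$ for $U$ that, on input $B$, simply writes $B$ verbatim to the output tape and halts. Then for any binary string $X$ the concatenation $P_0 X$ is a valid description of $X$, yielding $C(X) \leq |X| + |P_0| = |X| + O(1)$, where the constant depends only on the choice of $U$.

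For (C1) the main issue (and really the only nontrivial point in the whole observation) is that naively concatenating two binary descriptions leaves no way for $U$ to parse where one ends and the next begins. The standard fix is a self-delimiting (Elias-$\delta$ type) encoding of the length of one of the pieces. Let $B_X$, $B_Y$ be shortest descriptions of $X$ and $Y$ with $|B_Y| = C(Y)$. Writing $|B_Y|$ in binary takes $\lceil \log C(Y) \rceil$ bits, and packaging this binary string in a prefix-free manner costs an additional $o(\log C(Y))$ bits. A fixed program $P_1$ would then read the self-delimiting length, use it to carve $B_Y$ out of the tape, run $U(B_Y)$ to obtain $Y$, run $U$ on the remaining bits to obtain $X$, and output the pair $(X,Y)$. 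This gives $C(X,Y) \leq C(X)+C(Y)+\log C(Y) + o(\log C(Y))$. The ``in particular'' clause is the special case where the first description is taken conditional on $Y$: a description of the pair $(B_{X|Y}, Y)$ determines $X$ via an $O(1)$ additional program, so $C(X) \leq C(X|Y,Y)+O(1)$, and the same prefix trick bounds $C(X|Y,Y)$ by $C(X|Y)+C(Y)+\log C(Y)+o(\log C(Y))$.

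For (C2), since $f$ is computable it is realized by some Turing machine $M_f$. I would fix a short program $P_f$ of length depending only on $f$ that, given input $B_X$, first runs $U(B_X)$ to recover $X$ and then simulates $M_f$ on $X$ to produce $f(X)$. Then $P_f B_X$ is a description of $f(X)$, so $C(f(X)) \leq C(X) + |P_f| = C(X) + c_f$. The only genuinely delicate step in the whole proof is the self-delimiting length prefix in (C1), which is what forces the extra $\log C(Y) + o(\log C(Y))$ overhead; the other items follow immediately from universality of $U$.
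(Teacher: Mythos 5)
The paper itself does not prove this observation; it cites Li and Vit\'anyi for it, and your proof is precisely the standard textbook argument: a verbatim-copy program for (C0), a self-delimiting (Elias-$\delta$) length prefix to make the concatenation of two descriptions parseable for (C1), and universality of $U$ applied to a simulator of $M_f$ for (C2). The only blemish is the garbled notation $C(X\,|\,Y,Y)$ in your treatment of the ``in particular'' clause of (C1), but the intended argument---prepend a prefix-free description of $Y$ to a shortest conditional description of $X$ given $Y$, then decode $Y$ first and feed it to the conditional interpreter---is clearly correct and yields the stated bound.
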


We will be considering Kolmogorov complexity of graphs and bigraphs, and to do this we assume for the duration
of this section that all the graphs and bigraphs are ordered. This allows us when needed to identify the vertex set of an $n$ vertex graph with $[n]$.\footnote{We do, however, need to be somewhat careful when we do the identification, as this identification does not commute with the operation of taking subgraphs.} Thus any graph on $n$ vertices can be represented by a binary string of length $\binom{n}{2}$, and a $(n_1,n_2)$-bigraph can be represented by a string of length $n_1n_2$.  

The following additional useful properties follow directly from \cref{o:Kolmogorov1}.

\begin{cor}\label{c:Kolmogorov2}
	\begin{description}
	\item [(C3)] Let $X$ be a binary string of length $n$ with at most $m$ non-zero elements. Then $C(X) \leq m(\log n - \log m)+ 2m + O(\log n)$.
	\item [(C4)] Let $\mc{P}$ be a computable graph property. Then $$C(G) \leq \h(\mc{P},|V(G)|)+O(\log |V(G)|)$$ for every $G \in \mc{P}$.
	\item [(C5)] Conversely, for every graph property $\mc{P}$ and every $f: \bb{N} \to \bb{R}$ such that $\lim_{n \to \infty} f(n) = + \infty$ we have  $$C(G) \geq \h(\mc{P},|V(G)|)-f(|V(G)|)$$ for almost all $G \in \mc{P}$.
    \end{description}	
\end{cor}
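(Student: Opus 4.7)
The plan is to derive each of (C3), (C4), (C5) from \cref{o:Kolmogorov1}; parts (C3) and (C4) are encoding arguments, while (C5) is a pure counting argument that does not require computability.

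For (C3), I would describe $X$ by the triple $(n,m,k)$, where $k$ is the rank of $X$ under a fixed computable enumeration of length-$n$ binary strings with at most $m$ ones. Since the map $(n,m,k)\mapsto X$ is computable, (C2) gives $C(X)\le C(n,m,k)+O(1)$. Two applications of (C1), together with the bounds $C(n),C(m)\le \log n+O(1)$ from (C0), yield $C(n,m,k)\le \log k+O(\log n)$, so it remains to bound $\log k$. We have $k<\sum_{i\le m}\binom{n}{i}$; for $m\le n/2$ this is at most $2\binom{n}{m}\le 2(en/m)^m$, giving $\log k\le m(\log n-\log m)+m\log e+O(1)$, and the claimed estimate follows from $\log e<2$. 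The case $m>n/2$ is handled directly by (C0), since the claimed right-hand side already exceeds $n$.

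For (C4), fix a computable enumeration of $\mc{P}^n$; this exists for each $n$ because $\mc{P}$ is a computable property and $\mc{P}^n$ is finite. If $k$ is the rank of $G$ in this enumeration, then $G$ can be reconstructed from $(|V(G)|,k)$ by a computable procedure, so (C1) and (C2) give $C(G)\le \log k+O(\log|V(G)|)\le \h(\mc{P},|V(G)|)+O(\log|V(G)|)$.

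For (C5), the argument is purely combinatorial. The number of binary strings of length less than $\h(\mc{P},n)-f(n)$ is less than $2^{\h(\mc{P},n)-f(n)}$, so at most this many graphs $G\in \mc{P}^n$ can satisfy $C(G)<\h(\mc{P},n)-f(n)$; consequently the fraction of such graphs in $\mc{P}^n$ is at most $2^{-f(n)}$, which tends to $0$ as $f(n)\to\infty$. The only real bookkeeping concern throughout is tracking the $O(\log n)$ overhead from (C1) and the additive constants from (C0) and (C2); the specific constant $2m$ in (C3) arises solely from the inequality $\log e<2$, and I do not anticipate any deeper obstacle.
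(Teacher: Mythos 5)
Your proposal matches the paper's proof in all essentials: (C3) and (C4) are proved by encoding the object via its rank in a computable enumeration and invoking (C0)--(C2), and (C5) by the same counting of binary strings of bounded length. The only slip is the intermediate claim $\sum_{i\le m}\binom{n}{i}\le 2\binom{n}{m}$ for $m\le n/2$, which fails for $m$ close to $n/2$; this is harmless, since the standard bound $\sum_{i\le m}\binom{n}{i}\le (en/m)^m$ gives your stated estimate on $\log k$ directly (the paper uses $n(en/m)^m$ and absorbs the polynomial factor into the $O(\log n)$ term).
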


\begin{proof}
	For the proof of (C3), let $S$ be the set of binary strings of length $n$ with at most $k$ non-zero elements. Then $|S| = \sum_{i=0}^k\binom{n}{i} \leq n(en/m)^m$. As there exists a computable function that returns $X \in S$, given $n$ and a number of the string $X$ in the natural enumeration of elements of $S$, the required bound on $C(X)$ follows from \cref{o:Kolmogorov1}.
	
	The property (C4) holds as $G$ can be computed from $|V(G)|$ and the number of $G$ in any enumeration of the graphs in $\mc{P}$ on $n$ vertices.
	
	Finally, (C5) holds as the total number of  strings $X \in \mf{A}^*$ with $C(X) \leq N$ is no greater than $2^N$.
\end{proof}

For an ordered graph $G$ and disjoint $X,Y \subseteq V(G)$, let 
$$C_G(X,Y) = C(G[X,Y]\;|\;G[X],G[Y]).$$ 
That is, $C_G(X,Y)$ is the conditional complexity of determining edges between $X$ and $Y$ given the subgraph of $G$ induced on each of $X$ and $Y$. For brevity we denote $C_G(X,V(G)-X)$ by $C_G(X)$, and write $C_G(v)$ instead of $C_G(\{v\})$ for $v \in V(G)$.

\subsection{First applications}

As a first application of our newly introduced terminology, we will bound the complexity of graphs that admit an approximation guaranteed by Theorem~\ref{t:Alon}. Simultaneously, we wish to establish further properties of such approximation of a typical graph in a hereditary property. First, a simple observation.

\begin{obs}\label{o:partition1}
	Let $G$ be a graph  with $V(G)=[n]$, and let $\mc{P}$ be a partition of $[n]$ into $k \geq 2$ parts. Then
	\begin{equation}\label{e:partition1}
	C(G) \leq \sum_{X \in \mc{P}}C(G[X]) + \sum_{X,Y \in \mc{P}, X \neq Y} C_G(X,Y) + O(n \log k)
	\end{equation}
\end{obs}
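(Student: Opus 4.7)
The plan is to exhibit an explicit binary encoding of $G$ that the universal Turing machine can decode, and then invoke \cref{o:Kolmogorov1}(C2) to bound $C(G)$ by its length up to an additive constant. The encoding concatenates, with self-delimiting length markers, three blocks: (a) the partition $\mc{P}$ itself, written by listing for each $v \in [n]$ the index in $[k]$ of its class, using $\lceil \log k \rceil$ bits per vertex; (b) for each $X \in \mc{P}$ in a canonical order (say by smallest element), a shortest binary program producing $G[X]$, of length $C(G[X])$; (c) for each unordered pair $\{X,Y\}$ of distinct parts in canonical order, a shortest binary program producing $G[X,Y]$ given $(G[X],G[Y])$ as input, of length $C_G(X,Y)$.

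To see that this string determines $G$, the decoder first parses (a) to recover the vertex set of each part; then, using the length prefixes, reads off each program in (b) and runs it on the universal machine to reconstruct the induced subgraph $G[X]$; finally, it decodes each bigraph $G[X,Y]$ by running the conditional program from (c) with the already-reconstructed $G[X]$ and $G[Y]$ as input. The graph $G$ is reassembled from the partition and these pieces by a fixed algorithm, so \cref{o:Kolmogorov1}(C2) yields $C(G) \leq L + O(1)$, where $L$ is the total encoding length.

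Counting bits, block (a) contributes $n \lceil \log k \rceil = O(n \log k)$, blocks (b) and (c) contribute exactly the two sums on the right-hand side, and the $k + \binom{k}{2}$ self-delimiting length prefixes each contribute $O(\log n)$. In all regimes relevant to the sequel $k$ is bounded by the coloring number of the hereditary family and hence independent of $n$, so the prefix overhead $O(k^2 \log n)$ is absorbed into $O(n \log k)$, yielding the claim. The main point requiring care is the bookkeeping of the self-delimiting length markers, ensuring that the decoder can unambiguously locate each block and that the conditional programs in (c) receive exactly the intended input strings $G[X]$ and $G[Y]$.
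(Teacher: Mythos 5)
Your proof is essentially the paper's proof made explicit: the paper encodes $\mc{P}$ in $O(n\log k)$ bits (there are at most $k^n$ partitions of $[n]$ into $\le k$ parts), observes that $G$ is computable from $\mc{P}$, $(G[X])_{X\in\mc{P}}$, and $(G[X,Y])_{X\neq Y}$, and invokes \cref{o:Kolmogorov1} to combine these; you unfold exactly this into a concrete concatenated encoding and bit-count it. The one place where you are doing genuinely more careful bookkeeping than the paper is the $O(k^2\log n)$ cost of the self-delimiting length markers: the paper's appeal to (C1) incurs the same additive overhead but does not mention it. Your justification for absorbing $O(k^2\log n)$ into $O(n\log k)$ by noting that $k$ is a constant in every downstream application is sound in practice but does not literally establish the observation as stated for all $k\ge 2$ (for $k$ close to $n$, $k^2\log n$ exceeds $n\log k$). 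This is a blemish shared with the paper's own one-line proof, and it is harmless in context; if you wanted a version of the statement that is literally true for all $k$, the right-hand side should read $O(n\log k + k^2\log n)$, which collapses to $O(n\log k)$ whenever $k=O(\sqrt{n/\log n})$ and in particular whenever $k$ is bounded.
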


\begin{proof} As there are at most $k^n$ partitions of $[n]$ into $k$ parts, we have $C(\mc{P}) =  O(n \log k)$.   Clearly,
	$G$ can be computed from $\mc{P}$, $(G[X])_{X \in \mc{P}}$ and $(G[X,Y])_{X,Y \in \mc{P}}$, and thus (\ref{e:partition1}) follows from \cref{o:Kolmogorov1}.
\end{proof}

Given a bigraph $J$ denote by $\eps(J)$ the minimum real number such that $\h(\Forb(J),n) \leq n^{2 - \eps(J)}$ for every positive integer $n$. \cref{l:Jfree} implies that $\eps(J) >0$ for every bigraph $J$.
For a collection of disjoint subsets $\mc{X}$ of the vertex set of a graph $G$, we denote by $\h(\mc{X})$ the number of pairs of vertices of $G$ which do not belong to any set of $\mc{X}$. That is $\h(\mc{X}) = \binom{|V(G)|}{2} - \sum_{X \in \mc{X}}\binom{X}{2}$.

\begin{cor}\label{c:approx1} Let $J$ be a bigraph,  let 
$G$ be a graph  on $n$ vertices, and let $\mc{X}$ be a $(\Forb(J),l,m)$-approximation  of $G$ with $m =o(n)$. Then 
\begin{equation}\label{e:partition2}
	C(G) \leq \h(\mc{X}) + n^{2-\eps(J)} + m^2 + O(n \log n),
\end{equation}
and, in particular,
\begin{equation}\label{e:partition3}
	C(G) \leq \frac{(l-1)n^2}{2l} + n^{2-\eps(J)} + nm + O(n \log n)
\end{equation}	
\end{cor}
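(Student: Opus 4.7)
The plan is a direct Kolmogorov-complexity encoding of $G$ using the approximation $\mc{X}$, applying Observation~\ref{o:partition1}. First, extend $\mc{X}$ to a partition $\mc{P}=(X_0,X_1,\ldots,X_l)$ of $V(G)$ by taking $X_0:=V(G)-\bigcup_{i=1}^l X_i$, so that $|X_0|\leq m$. Observation~\ref{o:partition1} then bounds $C(G)$ by $\sum_{i=0}^{l} C(G[X_i]) + \sum_{0\leq i<j\leq l} C_G(X_i,X_j) + O(n\log n)$, using $l\leq n$ to absorb the overhead $O(n\log(l+1))$ into $O(n\log n)$.

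Next, bound each contribution. For $i\in[l]$, since $G[X_i]\in\Forb(J)$ and $\Forb(J)$ is a computable property, (C4) gives $C(G[X_i])\leq \h(\Forb(J),|X_i|)+O(\log n)\leq |X_i|^{2-\eps(J)}+O(\log n)$. When $\eps(J)\leq 1$, convexity of $x\mapsto x^{2-\eps(J)}$ combined with $|X_i|\leq n$ yields $|X_i|^{2-\eps(J)}\leq |X_i|\cdot n^{1-\eps(J)}$, hence $\sum_{i=1}^l|X_i|^{2-\eps(J)}\leq n^{2-\eps(J)}$; when $\eps(J)>1$ the same sum is $O(n)$, absorbed into the error term. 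For $X_0$ itself, (C0) gives $C(G[X_0])\leq \binom{|X_0|}{2}+O(\log n)\leq m^2/2+O(\log n)$, and for each cross pair (C0) applied to the bipartite adjacency string gives $C_G(X_i,X_j)\leq |X_i|\cdot|X_j|+O(1)$.

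The key identity is that the cross terms telescope into $\h(\mc{X})$: from $\binom{n}{2}=\sum_{i=0}^l\binom{|X_i|}{2}+\sum_{0\leq i<j\leq l}|X_i|\cdot|X_j|$, one obtains $\sum_{0\leq i<j\leq l}|X_i|\cdot|X_j|=\h(\mc{X})-\binom{|X_0|}{2}$. Combining,
$$C(G)\leq n^{2-\eps(J)}+\tfrac{m^2}{2}+\h(\mc{X})-\binom{|X_0|}{2}+O(n\log n)\leq \h(\mc{X})+n^{2-\eps(J)}+m^2+O(n\log n),$$
which is~(\ref{e:partition2}). For~(\ref{e:partition3}), bound $\h(\mc{X})$ by convexity of $\binom{\cdot}{2}$: since $\sum_{i=1}^l|X_i|\geq n-m$, Jensen's inequality gives $\sum_{i=1}^l\binom{|X_i|}{2}\geq l\binom{(n-m)/l}{2}$, and expanding yields $\h(\mc{X})\leq \frac{(l-1)n^2}{2l}+nm+O(n)$; substituting into~(\ref{e:partition2}) and using $m=o(n)$ to absorb $m^2$ into $nm$ completes the argument. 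There is no substantial obstacle here; the proof is essentially routine bookkeeping, the only mild subtlety being the separate treatment of $\eps(J)\leq 1$ versus $\eps(J)>1$ so that convexity of the exponent function can be invoked.
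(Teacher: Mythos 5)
Your proof is correct and follows essentially the same route as the paper: extend $\mc{X}$ to a full partition by adding the leftover set, apply Observation~\ref{o:partition1}, bound the parts via (C4) and the leftover/cross terms trivially, and finish (\ref{e:partition3}) by convexity of $\binom{\cdot}{2}$. The small extra care you take (the $\eps(J)\leq 1$ versus $\eps(J)>1$ case split and the explicit telescoping identity for the cross terms) is harmless and matches the paper's implicit bookkeeping.
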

\begin{proof}
	By Corollary~\ref{c:Kolmogorov2} (C4), we have $C(G[X]) \leq |X|^{2-\eps(J)} + O(\log n)$ for every $X \in \mc{X}$.  Thus we have 
	\begin{equation}\label{e:parts}
	\sum_{X \in \mc{X}}C(G[X])  \leq n^{2-\eps(J)} +O(l\log n).
	\end{equation}
	Let $Z=V(G) - \cup_{X \in \mc{X}}X$ , and let $\mc{P}=\mc{X} \cup \{Z\}$ be a partition of $V(G)$. By Observation~\ref{o:Kolmogorov1}  we have $C_G(X,Y) \leq |X||Y| + O(1)$ for all $X,Y \in \mc{P}$, and so
	\begin{equation}\label{e:rest}
		C(G[Z])+ \sum_{X,Y \in \mc{P}, X \neq Y} C_G(X,Y) \leq m^2 + \h(\mc{X}) + O(n \log n)
	\end{equation}
	Substituting (\ref{e:parts}) and (\ref{e:rest}) into (\ref{e:partition1}) yields (\ref{e:partition2}). Finally, note that by convexity we have
	$$\h(\mc{X}) \leq \binom{n}{2} - l\binom{(n-m)/l}{2} \leq \frac{(l-1)n^2}{2l} + \frac{nm}{l},$$
	and therefore (\ref{e:partition2}) implies (\ref{e:partition3}).
\end{proof}

\subsection{Perfect approximations}

In this subsection we further refine \cref{t:critical} by showing that for almost graphs admitting approximation as in \cref{t:critical} the edges between the parts of approximation are in certain sense randomly distributed.

For a finite set $X$ a \emph{$k$-base $\mc{A}$ on $X$} is a collection of  disjoint ordered $k$ element subsets of $X$. We say that a pair $(\mc{A},\mc{B})$ such that $\mc{A}$ is a $k_1$-base  on $V(G)$  and $\mc{B}$ is an $k_2$-base  on $V(G)$ \emph{induces an ordered $(k_1,k_2)$-bigraph $J$ in $G$} if  $(A,B)$ induces $J$ for some $A \in \mc{A},B \in \mc{B}$, and we say that $(\mc{A},\mc{B})$ is \emph{$J$-free}, otherwise. We say that  $(\mc{A},\mc{B})$ is \emph{restricted}  if it is $J$-free for some ordered $(k_1,k_2)$-bigraph  $J$.

To state the next lemma we will need to extend some of the already established notation from graph properties to bigraph properties. 
Given a bigraph property $\mc{P}$, let $\mc{P}^{n_1,n_2}$ denote the set of all bigraphs in $\mc{P}$ of the form  $(H,[n_1],[n_1+1,n_1+n_2])$\footnote{Here $[n_1+1, n_1,+n_2]$ denotes $\{n_1+1,n_1+2.\ldots,n_1+n_2\}$}, and let  $\h(n_1,n_2,\mc{P}) = \log|\mc{P}^{n_1,n_2}|$.

\begin{lem}\label{l:bases}
	Let $G$ be a graph on $n$ vertices, let $k_1,k_2$ be positive integers, let $X,Y \subseteq V(G)$ be disjoint, let $\mc{A}$ be a $k_1$-base on $X$, and let $\mc{B}$ be a $k_2$-base on $Y$.  Let $\mc{P}$ be a computable bigraph property such that $G[A,B] \in \mc{P}$ for all $A \in \mc{A}$, $B \in \mc{B}$. Then
	\begin{align}\label{e:bases1}
	C_G(X,Y) &\leq |X||Y| + C(\mc{A}|G[X]) + C(\mc{B}|G[Y]) \notag \\ &- (k_1k_2 - \h(k_1,k_2,\mc{P}))|\mc{A}||\mc{B}| + O(\log n). 
	\end{align}
	In particular, if $(\mc{A},\mc{B})$ is restricted, then 
	\begin{equation}\label{e:bases2}
	C_G(X,Y) \leq |X||Y| - \frac{|\mc{A}||\mc{B}|}{2^{k_1k_2}} + O(n \log n) 
	\end{equation}
\end{lem}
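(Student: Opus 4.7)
The plan is to construct a compact conditional encoding of $G[X,Y]$ given $G[X]$ and $G[Y]$, taking advantage of the fact that for every $A\in\mc{A}$ and $B\in\mc{B}$ the ordered bigraph $G[A,B]$ lies in the restricted class $\mc{P}$. Once the encoding length is exhibited, the complexity bound follows from the basic properties of $C(\cdot\mid\cdot)$ collected in \cref{o:Kolmogorov1}.

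I would build the encoding in four self-delimiting pieces. First, record $\mc{A}$ from $G[X]$ using $C(\mc{A}\mid G[X])$ bits and $\mc{B}$ from $G[Y]$ using $C(\mc{B}\mid G[Y])$ bits. Second, for each pair $(A,B)\in\mc{A}\times\mc{B}$, identify $A$ with $[k_1]$ and $B$ with $[k_1+1,k_1+k_2]$ in the given orders, so that $G[A,B]$ becomes an element of $\mc{P}^{k_1,k_2}$; encode the sequence of all $|\mc{A}||\mc{B}|$ such induced bigraphs jointly, which takes $\h(k_1,k_2,\mc{P})\cdot|\mc{A}||\mc{B}|+O(1)$ bits via a canonical enumeration of $\mc{P}^{k_1,k_2}$ (computable from $k_1,k_2$ since $\mc{P}$ is). Third, write out the adjacencies of the remaining pairs in $X\times Y$, namely those $(u,v)$ with $u\notin\bigcup_{A\in\mc{A}}A$ or $v\notin\bigcup_{B\in\mc{B}}B$; since the sets in $\mc{A}$ (and in $\mc{B}$) are pairwise disjoint, there are exactly $|X||Y|-k_1k_2|\mc{A}||\mc{B}|$ such pairs. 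Fourth, include $O(\log n)$ bookkeeping bits recording $k_1,k_2,|\mc{A}|,|\mc{B}|$ and the lengths of the preceding variable-length pieces, so that the decoder can parse them; this is standard using property (C1). Summing the four contributions and applying (C2) to invoke the canonical decoders yields (\ref{e:bases1}).

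To derive (\ref{e:bases2}) I would specialize (\ref{e:bases1}) to the bigraph property $\mc{P}$ consisting of all ordered bigraphs not equal to $J$ (restricted to shape $(k_1,k_2)$, extended arbitrarily elsewhere); this $\mc{P}$ is computable and, by the $J$-freeness assumption, satisfies the hypothesis of the first inequality. Then $|\mc{P}^{k_1,k_2}|=2^{k_1k_2}-1$, so $k_1k_2-\h(k_1,k_2,\mc{P})=-\log(1-2^{-k_1k_2})\geq 2^{-k_1k_2}$ using the elementary inequality $-\log(1-x)\geq x$. It remains to bound the encodings of the bases: any $k$-base on a set of at most $n$ elements can be described by recording, for each vertex, the index of its ordered set (or a flag indicating it is unused) together with its position, costing $O(n\log n)$ bits. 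Hence $C(\mc{A}\mid G[X])+C(\mc{B}\mid G[Y])=O(n\log n)$, and substituting into (\ref{e:bases1}) gives (\ref{e:bases2}).

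The only genuinely delicate point is the self-delimiting parsing in the bookkeeping step: the decoder receives only $G[X]$, $G[Y]$, and a single binary string, so each variable-length piece of the encoding must be preceded by a short prefix recording its own length. This is handled in the standard fashion afforded by (C1) and is absorbed in the $O(\log n)$ term, but it is what prevents the naive concatenation from being ambiguous and therefore deserves explicit attention in the full write-up.
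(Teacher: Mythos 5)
Your proposal is correct and follows essentially the same route as the paper's proof: condition on $G[X]$ and $G[Y]$, spend $C(\mc{A}\mid G[X])+C(\mc{B}\mid G[Y])$ bits on the bases, compress the sequence $(G[A,B])_{A\in\mc{A},B\in\mc{B}}$ to $\h(k_1,k_2,\mc{P})|\mc{A}||\mc{B}|+O(1)$ bits via an enumeration of $\mc{P}^{k_1,k_2}$, record the remaining $|X||Y|-k_1k_2|\mc{A}||\mc{B}|$ pairs verbatim, and for the restricted case take $\mc{P}$ to be the $J$-free $(k_1,k_2)$-bigraphs with $|\mc{P}^{k_1,k_2}|\leq 2^{k_1k_2}-1$ and bound $C(\mc{A}\mid G[X])+C(\mc{B}\mid G[Y])=O(n\log n)$. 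The only cosmetic difference is that the paper phrases the raw part as $C_G(X-X',Y)+C_G(X',Y-Y')\leq |X||Y|-|X'||Y'|$ rather than counting uncovered pairs directly, which is the same bound.
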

\begin{proof}
	Let $a = |\mc{A}|$, $b = |\mc{B}|$, $X' = \cup_{A \in \mc{A}} A$, $Y' = \cup_{B \in \mc{B}} B$. We have \begin{equation}\label{e:bases3}
	C(G[X',Y'] | \mc{A}, \mc{B}) \leq ab \cdot\h(k_1,k_2,\mc{P}) + O(\log n),
	\end{equation}
 as there are $|\mc{P}^{k_1,k_2}|^{ab}$ possibilities for the sequence
 	$(G[A,B])_{A \in \mc{A},B\in \mc{B}}$ satisfying  $G[A,B] \in \mc{P}$ for all $A \in \mc{A}$, $B \in \mc{B}$, and each such sequence can be encoded  as a binary string of length $ab \cdot \h(k_1,k_2,\mc{P}) + O(1)$, which is decodable given $k_1,k_2$ and $\mc{P}$.
 	By \cref{o:Kolmogorov1}, we have
 	\begin{align}\label{e:bases4}
 	 C_G(X,Y) \leq &C_G(X-X',Y)+C_G(X',Y-Y')+ C(\mc{A}|G[X]) + C(\mc{B}|G[Y])\notag \\ &+C(G[X',Y'] | \mc{A}, \mc{B}) + O(\log n) \notag \\ 
 	 \leq &|X||Y|- |X'||Y'|  + C(\mc{A}|G[X]) + C(\mc{B}|G[Y])\notag\\ &+C(G[X',Y'] | \mc{A}, \mc{B})+ O(\log n) .  
 	\end{align}
 	Combining (\ref{e:bases3}) and (\ref{e:bases4}) yields (\ref{e:bases1}). 
 	
 	To deduce (\ref{e:bases2}), let $J$ be a $(k_1,k_2)$-bigraph such that $(\mc{A},\mc{B})$ is $J$-free and let $\mc{P}$ be the family of $J$-free bigraphs. Then \begin{equation}\label{e:bases5} \h(k_1,k_2,\mc{P}) = k_1k_2 + \log\left(1 - \frac{1}{2^{k_1k_2}}\right) \leq k_1k_2 -  \frac{1}{2^{k_1k_2}}.
 	\end{equation}
 	Note further that $C(\mc{A})  = O(n \log n)$ for any base $\mc{A}$ on $[n]$. Substituting this bound and (\ref{e:bases5}) into (\ref{e:bases1}) we obtain (\ref{e:bases2}).	
\end{proof}

We say that a subset $X$ of $V(G)$ is \emph{ $(k,s)$-restricted} if there exists a $k$-base $\mc{A}$ on $X$ and a $k$-base $\mc{B}$ on $V(G)-X$ such that $|\mc{A}|,|\mc{B}| \geq s$ and  $(\mc{A},\mc{B})$ is restricted. We say that $X$ is \emph{$(k,s)$-unrestricted}, otherwise. We say that a collection $\mc{X}$ of disjoint subsets of $V(G)$ is \emph{$(k,s)$-unrestricted} if the set $\cup_{X \in \mc{X}'}X$  is $(k,s)$-unrestricted for every $\mc{X}' \subset \mc{X}$ such that $0 \neq \mc{X}' \neq \mc{X}$, and otherwise  we say that $\mc{X}$ is \emph{$(k,s)$-restricted}.

Combining  the bounds we established so far, we bound the complexity of graphs admitting a  $(k,s)$-restricted $(\Forb(J),l,m)$-approximation.

\begin{lem}\label{l:approx2} Let $k,l,m,s$ be positive integers, and let $J$ be a bigraph such that $k \geq |V(J)|$. 
 	Let $G$ be a graph on $n$ vertices admitting a $(k,s)$-restricted $(\Forb(J),l,m)$-approximation. Then  
 	\begin{equation}\label{e:restricted}
 	C(G) \leq \frac{(l-1)n^2}{2l} + n^{2-\eps(J)} + nm - \frac{s^2}{2^{k^2}} + O(n \log n)
 	\end{equation}
\end{lem}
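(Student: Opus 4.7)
The strategy is to refine the basic encoding of \cref{c:approx1} by isolating the restricted bipartition inside $V(G)$ and exploiting the savings provided by \cref{l:bases}. Write $\mc{X}=(X_1,\ldots,X_l)$ for the given $(\Forb(J),l,m)$-approximation and let $Z = V(G)-\bigcup_i X_i$, so $|Z|\le m$. Since $\mc{X}$ is $(k,s)$-restricted, there is a proper non-empty subfamily $\mc{X}'\subset\mc{X}$ such that $X'=\bigcup_{X\in\mc{X}'}X$ is $(k,s)$-restricted; fix witnesses, namely $k$-bases $\mc{A}$ on $X'$ and $\mc{B}$ on $Y'=V(G)-X'$ with $|\mc{A}|,|\mc{B}|\ge s$ and a $(k,k)$-bigraph $J'$ such that $(\mc{A},\mc{B})$ is $J'$-free.

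I would then decompose the encoding of $G$ along the bipartition $(X',Y')$ as
$$C(G)\;\le\;C(G[X'])+C(G[Y'])+C_G(X',Y')+O(n),$$
where the $O(n)$ accounts for specifying $X'\subseteq[n]$. For the two induced terms, note that $\mc{X}'$ is a $(\Forb(J),|\mc{X}'|,0)$-approximation of $G[X']$ and $\mc{X}-\mc{X}'$ is a $(\Forb(J),|\mc{X}-\mc{X}'|,|Z|)$-approximation of $G[Y']$; applying \cref{c:approx1} to each gives
$$C(G[X'])\;\le\;\h(\mc{X}')+|X'|^{2-\eps(J)}+O(n\log n),$$
$$C(G[Y'])\;\le\;\h(\mc{X}-\mc{X}')+|Y'|^{2-\eps(J)}+m^2+O(n\log n).$$
For the cross term, the bigraph $J'$ and the bases $(\mc{A},\mc{B})$ make inequality~(\ref{e:bases2}) of \cref{l:bases} applicable, yielding
$$C_G(X',Y')\;\le\;|X'||Y'|-\frac{s^2}{2^{k^2}}+O(n\log n).$$

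The proof concludes by elementary bookkeeping. The identity $\binom{|X'|}{2}+\binom{|Y'|}{2}+|X'||Y'|=\binom{n}{2}$ shows that $\h(\mc{X}')+\h(\mc{X}-\mc{X}')+|X'||Y'|=\h(\mc{X})$; super-additivity of $x\mapsto x^{2-\eps(J)}$ gives $|X'|^{2-\eps(J)}+|Y'|^{2-\eps(J)}\le n^{2-\eps(J)}$; the convexity estimate already used in the proof of \cref{c:approx1} bounds $\h(\mc{X})\le (l-1)n^2/(2l)+nm$; and $m^2\le nm$ is absorbed into the $nm$ term. Summing the three displayed inequalities and applying these observations yields exactly the claimed bound. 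I do not expect any real obstacle here: the content of the argument is already contained in the two earlier results, and the only step that demands attention is verifying that the three $\h$-terms combine precisely to $\h(\mc{X})$, which rests on the fact that $Z$ appears entirely on the $Y'$-side of the bipartition and therefore contributes to only one of the sub-approximation losses.
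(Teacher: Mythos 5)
Your proposal is correct and follows essentially the same route as the paper: split $V(G)$ along the restricted union $X'$ versus its complement, bound the two induced pieces via \cref{c:approx1} and the cross term via inequality (\ref{e:bases2}) of \cref{l:bases}, then combine with \cref{o:partition1}. The only (immaterial) difference is the final bookkeeping — the paper uses the (\ref{e:partition3})-form bounds and optimizes the resulting quadratic in $|X'|$ and $|\mc{X}'|$ directly, whereas you use the (\ref{e:partition2})-form bounds, recombine the $\h$-terms into $\h(\mc{X})$, and invoke the convexity estimate already present in the proof of \cref{c:approx1}.
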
 
\begin{proof} 
	Let $\mc{X}$ be a  $J$-free $(k,s)$-restricted $(l,m)$-approximation of $G$, and $\mc{X}' \subset \mc{X}$ such that $0 \neq \mc{X}' \neq \mc{X}$, and $Y=\cup_{X \in \mc{X}'}X$   is $(k,s)$-restricted. Let $|Y|=y$, let $l'=|\mc{X}'|$, and let $Z=V(G)-Y$. Note that  $\mc{X}'$ is a  $J$-free $(l',0)$-approximation of $G[Y]$ and $\mc{X}-\mc{X}'$ is a $J$-free $(l-l',m)$-approximation of $G[Z]$, by Corollary~\ref{c:approx1} we have
	\begin{align}
	C(G[Y]) &\leq \frac{(l'-1)y^2}{2l'} + y^{2-\eps(J)} + O(n \log n), \label{e:partition4}\\
	C(G[Z]) &\leq \frac{(l-l'-1)(n-y)^2}{2(l-l')} + (n-y)^{2-\eps(J)} + nm + O(n \log n). \label{e:partition4+}
	\end{align}
	Meanwhile, Lemma~\ref{l:bases} implies 
	\begin{equation}\label{e:partition5}
		C_G(Y,Z) \leq y(n-y) - \frac{s^2}{2^{k^2}} + O(n \log n). 
	\end{equation}
	 Substituting (\ref{e:partition4}), (\ref{e:partition4+}) and (\ref{e:partition5}) into the inequality (\ref{e:partition1}) applied to the partition $(Y,Z)$ of $V(G)$, we obtain
	\begin{align*}
		C(G) \leq &y(n-y) + \frac{(l'-1)y^2}{2l'} +\frac{(l-l'-1)(n-y)^2}{2(l-l')}   \\ &+ n^{2-\eps(J)} + nm - \frac{s^2}{2^{k^2}} + O(n \log n). 
    \end{align*}
	As  $$y(n-y) + \frac{(l'-1)y^2}{2l'} +\frac{(l-l'-1)(n-y)^2}{2(l-l')} \leq  \frac{(l-1)n^2}{2l}$$ for all $0 \leq y \leq n$ and $1 \leq l' \leq l-1$, (\ref{e:restricted}) follows.
\end{proof}

Let $G$ be a graph with $n=|V(G)|$.
We say that a collection of disjoint subsets $\mc{X}$ of $V(G)$ with $|\mc{X}|=l$ is an \emph{$(l,k,m)$-perfect approximation of $G$} for a triple of integers $l,k,m \geq 0$ if
	\begin{itemize}
		\item There exists a $(k,k)$-bigraph $J$ such that $\mc{X}$ is a $(\Forb(J),l,m)$-approximation of $G$,
		\item $\mc{X}$ is $(k,m)$-unrestricted, and
		\item $||X| - n/l|\leq m$ for every $X \in \mc{X}$.
	\end{itemize}

\begin{lem}\label{l:balanced} For every bigraph $J$, all $0 < \eps <\eps(J)/3$ ,  and positive integers $k,l,C >0$ there exist $n_0$ such that the following holds.
	Let $G$ be a graph with $|V(G)|=n \geq n_0$ and 
	\begin{equation}\label{e:balanced} C(G) \geq \frac{(l-1)}{2l}n^2- Cn^{2-3\eps},\end{equation} 
	and let $\mc{X}$ be a  $(\Forb(J),l,n^{1 - 3\eps})$-approximation of $G$. Then $\mc{X}$ is $(l,k,n^{1 - \eps})$-perfect. 
\end{lem}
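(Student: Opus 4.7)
The plan is to verify each of the three defining conditions of $(l, k, n^{1-\eps})$-perfection for $\mc{X}$ in turn. The first — existence of a $(k,k)$-bigraph $J'$ such that $\mc{X}$ is a $(\Forb(J'), l, n^{1-\eps})$-approximation of $G$ — is essentially automatic: I will take $J'$ to be $J = (H, A, B)$ padded with isolated vertices on each side to a $(k,k)$-bigraph, which exists as soon as $k \geq \max(|A|,|B|)$, a harmless lower bound to impose on the admissible $k$. Then $\Forb(J) \subseteq \Forb(J')$, and $n^{1-3\eps} \leq n^{1-\eps}$ gives the required approximation for free.

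The remaining two conditions — balance and unrestrictedness — I will prove by contradiction using the complexity lower bound (\ref{e:balanced}), and both use the same quantitative engine. If either condition fails at scale $n^{1-\eps}$, the preceding encoding lemmas (Corollary~\ref{c:approx1} and Lemma~\ref{l:approx2}) will produce an upper bound on $C(G)$ below the trivial $\frac{(l-1)n^2}{2l}$ by a saving of order $n^{2-2\eps}$. Meanwhile, all the error terms picked up along the way — $n^{2-\eps(J)}$ from the thin family, $n \cdot n^{1-3\eps} = n^{2-3\eps}$ from the approximation error, and $(n^{1-3\eps})^2 = n^{2-6\eps}$ from the leftover vertices — are $O(n^{2-3\eps})$ under the hypothesis $\eps < \eps(J)/3$. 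Since $n^{2-2\eps}$ dominates $n^{2-3\eps}$, the estimates will be incompatible with (\ref{e:balanced}) once $n$ is sufficiently large.

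For condition 3, I will suppose some $X_j \in \mc{X}$ satisfies $||X_j| - n/l| > n^{1-\eps}$ and set $y_i := |X_i| - (n-r)/l$ with $r := n - \sum_i |X_i| \leq n^{1-3\eps}$, so that $\sum_i y_i = 0$ and $\max_i |y_i| \geq (1-o(1))n^{1-\eps}$. A Lagrange-multiplier computation then yields $\sum_i y_i^2 \geq \frac{l}{l-1}(1-o(1))n^{2-2\eps}$. Substituting into the identity
\[
\h(\mc{X}) = \frac{(l-1)n^2}{2l} + O(nr/l) - \tfrac{1}{2}\sum_i y_i^2,
\]
and then into the bound of Corollary~\ref{c:approx1}, gives $C(G) \leq \frac{(l-1)n^2}{2l} - \Omega(n^{2-2\eps})$, contradicting (\ref{e:balanced}).

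For condition 2, I will suppose some $\mc{X}' \subsetneq \mc{X}$ with $\mc{X}' \neq \emptyset$ has $\bigcup \mc{X}'$ being $(k, n^{1-\eps})$-restricted. Then $\mc{X}$ itself is a $(k, n^{1-\eps})$-restricted $(\Forb(J), l, n^{1-3\eps})$-approximation of $G$, and Lemma~\ref{l:approx2} (with $s = n^{1-\eps}$, $m = n^{1-3\eps}$, taking $k$ at least $|V(J)|$) directly gives
\[
C(G) \leq \frac{(l-1)n^2}{2l} + n^{2-\eps(J)} + n^{2-3\eps} - 2^{-k^2} n^{2-2\eps} + O(n\log n),
\]
again contradicting (\ref{e:balanced}). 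The only step that is not a pure plug-in is the convexity inequality underlying condition 3, and even there the real obstacle is bookkeeping — making sure the $\Omega(n^{2-2\eps})$ savings survive all the $O(n^{2-3\eps})$-scale error terms — and the hypothesis $\eps < \eps(J)/3$ is exactly what is needed to close this gap.
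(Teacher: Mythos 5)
Your proposal is correct and follows essentially the same route as the paper: unrestrictedness via Lemma~\ref{l:approx2} against the lower bound (\ref{e:balanced}), balance via Corollary~\ref{c:approx1} together with the convexity of $\h(\mc{X})$, and the first condition handled by padding $J$ (the paper's ``without loss of generality $k \geq |V(J)|$''). The only cosmetic difference is that you prove balance by contradiction using a sum-of-squares estimate over all parts, whereas the paper isolates a single part and reads off $|x-n/l|=O(n^{1-3\eps/2})$ directly; the underlying convexity computation is the same.
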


\begin{proof}
We assume without loss of generality that $k \geq |V(J)|$.		By Lemma~\ref{l:approx2}, if   $\mc{X}$ is $(k,n^{1 - \eps})$-restricted then $$C(G) \leq \frac{(l-1)n^2}{2l} - \frac{n^{2 - 2\eps}}{2^{k^2}} + n^{2 - 2\eps(J)} + n^{2 - 3\eps} + O(n\log n),$$  in contradiction with (\ref{e:balanced}), given that $n_0$ is chosen large enough as a function of $C$,$k$, $\eps$ and the constant hidden in $O$-notation. Thus  $\mc{X}$ is $(k,n^{1 - \eps})$-unrestricted.

It remains to show that  $||X| - n/l|\leq n^{1 - \eps}$ for every $X \in \mc{X}$. Let $x = |X|$. Applying Corollary~\ref{c:approx1}, as in the proof of ~\cref{l:approx2}, we obtain,
\begin{align*}
C(G) & \leq h(\mc{X}) +n^{2 - \eps(J)} + O(n^{2 - 3\eps}) \\&\leq x(n-x) + \frac{(l-2)(n-x)^2}{2(l-1)} + O(n^{2 - 3\eps}) \\ &= \frac{(l-1)n^2}{2l} - \frac{l}{2(l-1)}\left(x-\frac {n}{l}\right)^2  +  O(n^{2 - 3\eps}). 
\end{align*}
Therefore by(\ref{e:balanced}) we have  $|x - n/l|=O(n^{1- 3\eps/2})$, and so $|x - n/l|\leq n^{1 - \eps}$ for sufficiently large $n$, as desired.
\end{proof}

The following theorem is our first refinement of Theorem~\ref{t:Alon} in route to Theorem~\ref{t:critical}.

\begin{thm}\label{t:approx}
	Let $\mc{F}$ be a hereditary family with $l=\wpn(\mc{F})$. Then there exist $\eps >0$ such that almost all  graphs in $G \in \mc{F}$ admit an $(l,k,|V(G)|^{1 - \eps})$-perfect approximation.
\end{thm}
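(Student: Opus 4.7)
The plan is to combine four results already established in the excerpt: the rough typical structure furnished by \cref{t:Alon}, the $J$-freeness characterization of thin families from \cref{l:Jfree}, a Kolmogorov complexity lower bound derived from \cref{l:lower} together with \cref{c:Kolmogorov2}(C5), and the upgrading mechanism of \cref{l:balanced}. The driving idea is that typical graphs in $\mc{F}$ have near-maximum Kolmogorov complexity, and this incompressibility forces the rough approximation furnished by \cref{t:Alon} to be both unrestricted and balanced.

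First I would apply \cref{t:Alon} to fix $\eps_0>0$ and a thin hereditary family $\mc{T}$ such that almost every $G \in \mc{F}$ admits a $(\mc{T}, l, |V(G)|^{1-\eps_0})$-approximation. \cref{l:Jfree} then yields a bigraph $J_0$ with $\mc{T}\subseteq \Forb(J_0)$. Padding $J_0$ with isolated vertices on both sides gives a $(k,k)$-bigraph $J$, for some $k \geq |V(J_0)|$, that contains $J_0$ as an induced sub-bigraph; hence $\Forb(J_0)\subseteq \Forb(J)$, and the approximation furnished above is simultaneously a $(\Forb(J), l, |V(G)|^{1-\eps_0})$-approximation.

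Next I would lower bound the Kolmogorov complexity of typical graphs in $\mc{F}$. \cref{l:lower} gives $\h(\mc{F},n) \geq \tfrac{l-1}{2l}n^2 - O(1)$, and \cref{c:Kolmogorov2}(C5) applied with $f(n)=\log n$ shows that almost all $G \in \mc{F}$ with $|V(G)|=n$ satisfy
$$C(G) \geq \h(\mc{F},n) - \log n \geq \tfrac{l-1}{2l}n^2 - O(\log n).$$
Now fix $\eps$ with $0 < \eps < \min(\eps_0, \eps(J))/3$. For $n$ sufficiently large the above estimate implies $C(G) \geq \tfrac{l-1}{2l}n^2 - n^{2-3\eps}$, and since $n^{1-\eps_0} \leq n^{1-3\eps}$, intersecting the two ``almost all'' subsets of $\mc{F}$ produced above yields a set whose typical member $G$ satisfies both the complexity bound and admits a $(\Forb(J), l, n^{1-3\eps})$-approximation. \cref{l:balanced} then certifies that this approximation is $(l, k, n^{1-\eps})$-perfect, as required.

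The argument is essentially a one-shot fusion of the existing tools, and its only delicate point is bookkeeping of exponents: the single $\eps$ in the conclusion must be chosen compatibly with the error exponent $\eps_0$ from \cref{t:Alon} and with $\eps(J)$ appearing in the range of validity of \cref{l:balanced}. The main conceptual obstacle, namely ruling out restricted base pairs across the partition and forcing equal part sizes, has already been handled by \cref{l:balanced}; the present theorem simply harvests that obstruction via the complexity lower bound coming from \cref{l:lower} and (C5).
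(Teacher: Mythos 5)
Your proposal is correct and follows essentially the same route as the paper's own proof: a complexity lower bound from \cref{l:lower} together with \cref{c:Kolmogorov2}~(C5), the rough approximation from \cref{t:Alon}, and the upgrade via \cref{l:balanced}. The only difference is cosmetic: you make explicit the passage from the thin family $\mc{T}$ to a bigraph $J$ via \cref{l:Jfree} (and the padding to a $(k,k)$-bigraph), a step the paper leaves implicit by stating \cref{t:Alon} directly with a bigraph in its proof.
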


\begin{proof}	From Lemma~\ref{l:lower} and Corollary~\ref{c:Kolmogorov2} (C5), it follows that $$C(G) \geq \frac{(l-1)}{2l}n^2- O(n)$$ for almost all graphs $G \in \mc{F}$ on $n$ vertices. 
	
	By Theorem~\ref{t:Alon} there exist $\eps>0$ and a bigraph $J$ such that almost all  graphs in $G \in \mc{F}$ admit a  $(\Forb(J),l,|V(G)|^{1 - 3\eps})$-approximation. By decreasing $\eps$ if necessary, we can further assume that $\eps < \eps(J)/3$. 
	
	It follows from  \cref{l:balanced} that if $n$ is large enough then the graphs $G$ satisfying the above two properties admit  an $(l,k,|V(G)|^{1 - \eps})$-perfect approximation, as desired. 
\end{proof}

We take advantage of the fact that parts  of a perfect approximation are unrestricted in several ways. First, we can use it to build larger subgraphs from pieces which are present sufficiently often in the parts.  This is formalized in the next lemma. 

\begin{lem}\label{l:unrestricted} Let $G$ be a graph, let $s,t,k_1,\ldots,k_t$ be positive integers, let $k=\sum_{i=1}^t k_i$, and let $\mc{X}=(X_1,X_2,\ldots,X_t)$ be a $(k,s)$-unrestricted collection of disjoint subsets of $V(G)$. Let $\mc{A}_i$ be a $(k_i,(t-1)s)$-base in $X_i$ for  $1 \leq i\leq t$, and  let $J(i,j)$ be a $(k_i,k_j)$-ordered bigraph for  $1 \leq i <j \leq t$.   Then there exists $A_1 \in \mc{A}_1, A_2 \in \mc{A}_2 \ldots, A_t \in \mc{A}_t$  such that $G[A_i,A_j]$ is isomorphic to $J(i,j)$ for all   $1 \leq i <j \leq t$.
\end{lem}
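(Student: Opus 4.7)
The plan is to proceed by induction on $t$, with the base case $t=1$ being essentially vacuous (any $A_1 \in \mc{A}_1$ works as the set of bigraph conditions is empty). For the inductive step at $t \geq 2$, I would first establish the subclaim that there exists $A_t \in \mc{A}_t$ such that for every $i < t$ the refined subbase $\mc{A}_i' := \{A \in \mc{A}_i : G[A, A_t] \cong J(i,t)\}$ satisfies $|\mc{A}_i'| \geq (t-2)s$. Granting this, applying the inductive hypothesis to the collection $(X_1, \ldots, X_{t-1})$ with bases $\mc{A}_i'$ and bigraph family $\{J(i,j) : 1 \leq i < j \leq t-1\}$ yields $A_1, \ldots, A_{t-1}$ completing the desired tuple. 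The reduced collection inherits $(k', s)$-unrestrictedness with $k' = k - k_t$ from $\mc{X}$ via a padding argument: any $J$-restricted pair of $k'$-bases lifts, by adjoining $k-k'$ additional disjoint vertices on each side, to a $J^+$-restricted pair of $k$-bases for any $(k,k)$-bigraph $J^+$ extending $J$.

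To prove the subclaim I would argue by contradiction and pigeonhole. If no valid $A_t$ exists, then for each $A_t \in \mc{A}_t$ some $i(A_t) < t$ satisfies $|\mc{A}_{i(A_t)}(A_t)| < (t-2)s$, writing $\mc{A}_i(A_t) := \{A \in \mc{A}_i : G[A,A_t] \cong J(i,t)\}$. Pigeonholing across the $t-1$ possible values of $i(A_t)$ yields an index $i^*$ and $B := \{A_t : i(A_t) = i^*\}$ with $|B| \geq |\mc{A}_t|/(t-1) \geq s$. The target is a contradiction with the $(k,s)$-unrestrictedness of $X_{i^*}$ (guaranteed since $\{X_{i^*}\}$ is a proper nonempty subset of $\mc{X}$), obtained by producing subbases $\mc{A}^* \subseteq \mc{A}_{i^*}$ and $\mc{B}^* \subseteq B$ of sizes at least $s$ with no pair inducing $J(i^*,t)$, and applying the analogous padding lift: extending each element of $\mc{A}^*$ to a $k$-subset of $X_{i^*}$ and each element of $\mc{B}^*$ to a $k$-subset of $V(G) - X_{i^*}$ converts the $J(i^*,t)$-free rectangle into a $J^+$-free pair of $k$-bases for every $(k,k)$-bigraph $J^+$ extending $J(i^*,t)$, directly contradicting $(k,s)$-unrestrictedness.

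The main obstacle is extracting such $(\mc{A}^*, \mc{B}^*)$ from the pigeonhole output. Each $A_t \in B$ has at most $(t-2)s - 1$ matches in $\mc{A}_{i^*}$, hence at least $|\mc{A}_{i^*}| - (t-2)s + 1 \geq s+1$ non-matches, so the non-match bipartite graph between $\mc{A}_{i^*}$ and $B$ has minimum degree at least $s+1$ on the $B$-side; a Zarankiewicz-type extraction then produces $s$-element subsets on each side with every pair in the non-match graph, i.e., the required $J(i^*,t)$-free rectangle. The threshold $(t-1)s$ in the hypothesis is calibrated precisely so that the pigeonhole yields $|B| \geq s$ and the derived non-match degree reaches $s+1$, both needed for the extraction, while simultaneously ensuring that the ambient sets $X_{i^*}$ and $V(G) - X_{i^*}$ contain enough vertices to carry out the padding to genuine $k$-bases.
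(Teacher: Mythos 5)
Your plan diverges from the paper's proof at exactly the point where it breaks. The intermediate statement you rely on --- a single $A_t \in \mc{A}_t$ with at least $(t-2)s$ matches in \emph{every} $\mc{A}_i$ simultaneously --- is not what the unrestrictedness hypothesis delivers, and the ``Zarankiewicz-type extraction'' step is invalid. What unrestrictedness forbids is an $s\times s$ all-non-matching rectangle (a $J(i^*,t)$-free pair of bases of size $s$); what your pigeonhole gives you is only that each $A_t\in B$ misses at most $(t-2)s-1$ elements of $\mc{A}_{i^*}$, i.e.\ has non-match degree at least $s+1$ into a ground set of size $(t-1)s$. Minimum degree $s+1$ in a host of size $(t-1)s$ does not force a complete bipartite $K_{s,s}$ in the non-match graph when $s\geq 2$: for $t\geq s+2$ take $|B|=s$ vertices whose non-match neighbourhoods in $\mc{A}_{i^*}$ are pairwise disjoint sets of size $s+1$; then every $A_t\in B$ has fewer than $(t-2)s$ matches, yet no two vertices of $B$ share even one common non-match, so no $J(i^*,t)$-free $s\times s$ rectangle exists and no contradiction with unrestrictedness is obtained. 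So the subclaim cannot be extracted this way; the hypothesis only rules out independent $s\times s$ rectangles, which is matching-type information, not high-degree-vertex information. A secondary problem sits at the bottom of your recursion: at $t=2$ the threshold $(t-2)s=0$ makes the subclaim vacuous, the refined base $\mc{A}_1'$ may be empty, and the $t-1=1$ instance then produces no $A_1$; the case $t=2$ has to be argued directly (if no pair in $\mc{A}_1\times\mc{A}_2$ induces $J(1,2)$, the pair of bases, each of size at least $s$, is $J(1,2)$-free, contradicting unrestrictedness).

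The paper exploits the rectangle-freeness in the correct way: it forms the bipartite graph on $\mc{A}_{t-1}\cup\mc{A}_t$ whose edges are the pairs inducing $J(t-1,t)$, observes that every vertex cover has size at least $(t-2)s$ (otherwise deleting it leaves a $J(t-1,t)$-free pair of bases of size at least $s$), extracts by K\H{o}nig's theorem a matching of size $(t-2)s$, merges each matched pair $A\cup B$ into one ordered $(k_{t-1}+k_t)$-set, merges $X_{t-1}$ with $X_t$ and the bigraphs $J(i,t-1)$, $J(i,t)$ into $J'(i,t-1)$, and recurses on $t-1$ parts with bases of size $(t-2)s$. This matching-and-merging step is the idea missing from your proposal. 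Your padding remarks (passing between $k_i$-bases and the $k$-bases appearing in the definition of $(k,s)$-restrictedness) concern a point the paper itself leaves implicit and are reasonable in the intended regime, but they do not repair the extraction step, which is the genuine gap.
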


\begin{proof}
	We prove the lemma by induction on $t$. The base case $t=2$ is trivial.
	
	Thus we assume $t \geq 3$ and, for the induction step,  consider an auxiliary bipartite graph $F$ with bipartition $(\mc{A}_{t-1},\mc{A}_t)$ where $A \in \mc{A}_{t-1}$ and $B \in \mc{A}_{t}$ are joined by an edge if $G[A,B]$ is isomorphic to $J(t-1,t)$. As  $X_{t-1}$ and $X_t$ are  $(k,s)$-unrestricted, every vertex cover of $F$ has size at least $(t-2)s$. Thus $F$ contains a matching $M$ of size $(t-2)s$. Let $k'_{t-1}=k_{t-1}+k_t$, let $X'_{t-1} = X_{t-1} \cup X_t$, and let $\mc{A}'_{t-1}$ consist of unions of pairs of ordered sets which form edges of $M$,  where we order $A \cup B$ for $A \in \mc{A}_{t-1}$ and $B \in \mc{A}_{t}$ by insisting that all elements of $A$ follow the elements of $B$. Thus  $\mc{A}'_{t-1}$ is a $(k'_{t-1},(t-2)s)$-base.
	
	Finally, for every $1 \leq i \leq t-2$ let $J'(i,t-1)$ be obtained from the ordered bigraphs $J(i,t-1)=(H,A,B)$ and  $J(i,t)=(H',A',B')$ by identifying  $A$ and $A'$ and ordering the other side of the bipartition $B \cup B'$ so that $B$ precedes $B'$.  
	
	Let $\mc{X}'=(X_1,X_2,\ldots,X_{t-2},X'_{t-1})$, and let $\mc{A}'_i$ be a subset of $\mc{A}_i$ of size $(t-2)s$ for $1 \leq i \leq t-2$. It is straightforward to verify  that $\mc{X}'$, $\mc{A}'_1,\ldots,\mc{A}'_{t-1}$ satisfy the conditions necessary for the induction hypothesis to be applicable.  Applying the induction hypothesis to bigraphs $J'(i,j)$ for  $1 \leq i <j \leq t-1$, where $J'(i,j)=J(i,j)$ if $j < t-2$, we obtain the desired conclusion in the induction step.
\end{proof}

Second useful property of a perfect approximation is that any two such approximations only differ on a small set vertices. We formalize this observation in the following  lemma.

\begin{lem}\label{l:approxClose}
	Let $G$ be a graph on $n$ vertices, let $J$ be a  $(k,k)$-bigraph, and let $X,Y \subseteq V(G)$ be such that $G[X]$ is $J$-free and $Y$ is $(k,s)$-unrestricted. Then $\min\{|X-Y|,|X \cap Y|\} \leq ks$.  
\end{lem}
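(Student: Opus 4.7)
The plan is to argue by contradiction. Suppose for the sake of contradiction that both $|X \cap Y| > ks$ and $|X - Y| > ks$. Then $X \cap Y$ contains at least $s$ pairwise disjoint ordered $k$-subsets, which we collect into a $k$-base $\mathcal{A}$ on $Y$ with $|\mathcal{A}| \geq s$; analogously, $X - Y$ yields a $k$-base $\mathcal{B}$ on $V(G) - Y$ with $|\mathcal{B}| \geq s$. By construction both $\cup_{A \in \mathcal{A}} A$ and $\cup_{B \in \mathcal{B}} B$ are contained in $X$.

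Next, since $G[X]$ is $J$-free, no embedding of $J$ into $G$ has image contained in $X$, and in particular for every $A \in \mathcal{A}$ and every $B \in \mathcal{B}$ the induced ordered bigraph $G[A,B]$ is not isomorphic to $J$. Hence the pair $(\mathcal{A}, \mathcal{B})$ is $J$-free, which by definition makes it restricted. But this directly contradicts the hypothesis that $Y$ is $(k,s)$-unrestricted, which forbids precisely such a pair of bases of size at least $s$. The desired inequality $\min\{|X - Y|, |X \cap Y|\} \leq ks$ follows.

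There is no substantive obstacle beyond carefully unpacking the two definitions that meet here: on one side the hypothesis that $G[X]$ is $J$-free restricts any pair of bases drawn from $X$, and on the other side the hypothesis that $Y$ is $(k,s)$-unrestricted forbids any restricted pair of bases straddling the cut at $Y$. Ensuring that the bases we produce can simultaneously witness both conditions --- which requires only that $X \cap Y$ and $X - Y$ separately contain enough elements to host $s$ disjoint ordered $k$-subsets --- is the entire content of the argument, and drives the quantitative bound $ks$.
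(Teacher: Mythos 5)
Your proof is correct and follows the same contradiction argument as the paper: assume both $|X\cap Y|$ and $|X-Y|$ exceed $ks$, build a $k$-base on each side of the cut at $Y$ with both bases contained in $X$, and observe that $J$-freeness of $G[X]$ makes the resulting pair restricted, contradicting that $Y$ is $(k,s)$-unrestricted. The only cosmetic difference is the labeling of which base sits in $Y$ versus $V(G)-Y$, which is immaterial.
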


\begin{proof}
	Assume to the contrary that $|X-Y|\ge ks$ and $|X\cap Y|\ge ks$. Choose an arbitrary $(k,s)$-base $\mc{A}$ on $X-Y$ and a   $(k,s)$-base $\mc{B}$ on $X \cap Y$. Then $(\mc{A},\mc{B})$ is $J$-free, as $G[X]$ is $J$-free, in contradiction to the assumption that $Y$ is $(k,s)$-unrestricted.
\end{proof}

\cref{l:approxClose} in particular shows that every graph has few perfect approximation, and thus each such approximation has low complexity, conditioned on the graph.

 \begin{cor}\label{c:perfect2}
 	Let $G$ be a graph on $n$ vertices, and let  $\mc{P}$ be an $(l,k,m)$-perfect approximation of $G$ for some integers $k,l,m>0$. Then $$C(\mc{P} \:|\: G ) = O(k l^2 m\log n).$$ 
 \end{cor}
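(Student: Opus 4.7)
The strategy is to exploit the rigidity of perfect approximations established in Lemma~\ref{l:approxClose}: a $J$-free subset and a $(k,m)$-unrestricted subset must either have small intersection or small symmetric difference. I would fix a canonical reference approximation $\mc{P}^*$ computable from $G$, match the parts of $\mc{P}$ to those of $\mc{P}^*$ via a permutation $\sigma$, and then encode $\mc{P}$ as the list of (small) symmetric differences $X_i\triangle X_{\sigma(i)}^*$.

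Assuming $l\ge 2$ (the case $l=1$ reduces to encoding the at most $m$ vertices in $V(G)\setminus X_1$ via Corollary~\ref{c:Kolmogorov2}(C3)), since $\mc{P}$ itself witnesses existence, define $\mc{P}^*=(X_1^*,\ldots,X_l^*)$ to be the lexicographically first $(l,k,m)$-perfect approximation of $G$, with some witnessing $(k,k)$-bigraph $J^*$; this depends only on $G$ and on $k,l,m$. Writing $\mc{P}=(X_1,\ldots,X_l)$ with witnessing bigraph $J$, Lemma~\ref{l:approxClose} applied with $X=X_i$ and $Y=X_j^*$ (noting that $\{X_j^*\}$ is a proper nonempty sub-collection of $\mc{P}^*$, so $X_j^*$ is $(k,m)$-unrestricted) gives $\min\{|X_i\setminus X_j^*|,\,|X_i\cap X_j^*|\}\le km$ for every $i,j$. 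Since $|X_i|\ge n/l-m$ and $X_i$ misses $\bigcup_j X_j^*$ on at most $m$ vertices, for $n$ large the pigeonhole principle produces a unique $j=\sigma(i)$ with $|X_i\cap X_j^*|>km$, forcing $|X_i\setminus X_j^*|\le km$; the bound $|X_j^*|\le n/l+m$ then forces $\sigma$ to be a permutation of $[l]$.

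Swapping roles (using that $X_i$ is $(k,m)$-unrestricted and $G[X_{\sigma(i)}^*]$ is $J^*$-free) yields $|X_{\sigma(i)}^*\setminus X_i|\le km$, so $|X_i\triangle X_{\sigma(i)}^*|\le 2km$ for every $i$. One then encodes $\mc{P}$ given $G$ by specifying $k,l,m$ and $\sigma$ in $O(\log n + l\log l)$ bits, and, for each $i\in[l]$, the at most $2km$-element set $X_i\triangle X_{\sigma(i)}^*\subseteq V(G)$; by Corollary~\ref{c:Kolmogorov2}(C3) this last cost is $O(km\log n)$. Summing over $i$ gives $C(\mc{P}\mid G)=O(klm\log n)$, comfortably within the advertised $O(kl^2m\log n)$. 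The only real subtlety is the well-definedness of $\sigma$, which requires $n/l^2$ to exceed a constant multiple of $km$; for smaller $n$ the claimed bound already dominates the trivial $O(n\log l)$-bit encoding of $\mc{P}$ as an assignment of vertices to parts, so the conclusion holds unconditionally.
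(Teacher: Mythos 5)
Your proposal is correct and follows essentially the same route as the paper: fix a canonical perfect approximation computable from $G$, apply Lemma~\ref{l:approxClose} to show each part of $\mc{P}$ is within a small symmetric difference of a canonical part, and encode those differences via Corollary~\ref{c:Kolmogorov2}(C3). The paper streamlines the matching step by using a per-part dichotomy (either $|X_i|\le klm$, so $X_i$ is listed directly, or $|X_i\triangle X_j'|\le 2km$ for some $j$), which avoids your pigeonhole/balancedness argument for the permutation $\sigma$ and the separate treatment of small $n$, at the cost of the slightly weaker (but sufficient) bound $O(kl^2m\log n)$.
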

 \begin{proof} There exists a computable function $\mc{P}_*$ that given a graph $G$, and integers $k$ and $l$ returns an  $(l,k,m')$-perfect approximation $\mc{P}_*=\mc{P}_*(G,l,k)$ of $G$ such that $m'$ is as small as possible. That is $C(\mc{P}_*\:|\: G )=O(k+l)$. 
 	
 	As $\mc{P}=(X_1,\ldots,X_l)$ is an $(l,k,m)$-perfect approximation of $G$ then  $\mc{P}_*=(X'_1,\ldots,X'_l)$ is also $(l,k,m)$-perfect. By~\cref{l:approxClose} for each $1 \leq i \leq l$ we either have $|X_i| \leq klm$, or $|X_i \triangle X_j'| \leq 2km$ for some $1 \leq j \leq l$. Thus $C( X_i | \mc{P}_*, G) \leq klm \log n + O(\log n),$  implying the corollary.  
 \end{proof}
 
\subsection{Additional technical lemmas}
 
In our further refinements of \cref{t:approx}  we start  with a perfect approximation as the base of our structure and  analyze the bigraphs $G[Z,X]$ where $Z$ is a small set of vertices, and $X$ is a part of our approximation. On one hand, if $\h(n,\mc{F})$ is quickly increasing, then $G[Z,X]$ must have high complexity for most of the parts $X$. On the other hand, we should be unable to construct a graph not in $\mc{F}$ from $G[Z]$ and subgraphs induced on the parts.
Kolmogorov complexity is particularly convenient as the bookkeeping language for counterbalancing these two requirements. 

Let us start preparing the ingredients necessary to implement this strategy.
First, we need an analogue of \cref{o:partition1} in the bigraph setting, which similarly straightforwardly follows from \cref{o:Kolmogorov1}. 

 \begin{obs}\label{o:partition2} Let $G$ be a graph, let  $X,Y \subseteq V(G)$ be disjoint, and let $ \mc{P}$ be a partition of $Y$.
	 	Then $$C_G(X,Y) \leq C(\mc{P}|G[Y])+\sum_{Y' \in \mc{P}}C(G[X,Y'] |G[X],G[Y]) + O(|\mc{P}|\log |Y|).$$ 	
 \end{obs}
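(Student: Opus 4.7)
The plan is to view $G[X,Y]$ as a concatenation of smaller pieces and apply the chain-rule property (C1) from \cref{o:Kolmogorov1} iteratively.

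Concretely, given the data $\mc{P}$ together with the family of bigraphs $\bigl(G[X,Y']\bigr)_{Y' \in \mc{P}}$, a simple computable function reconstructs $G[X,Y]$ by taking the union of edges across the pieces. Hence by \cref{o:Kolmogorov1}\,(C2) we have
\begin{equation*}
C\!\left(G[X,Y]\,\Big|\, G[X], G[Y]\right) \leq C\!\left(\mc{P}, (G[X,Y'])_{Y' \in \mc{P}}\,\Big|\, G[X], G[Y]\right) + O(1).
\end{equation*}
Applying (C1) iteratively to concatenate the $|\mc{P}|+1$ descriptions yields
\begin{equation*}
C_G(X,Y) \leq C(\mc{P}\,|\,G[X],G[Y]) + \sum_{Y' \in \mc{P}} C\!\left(G[X,Y']\,|\,G[X], G[Y]\right) + O(|\mc{P}|\log|Y|),
\end{equation*}
since each of the $|\mc{P}|$ pieces $G[X,Y']$ admits a trivial description of length at most $|X||Y'|+O(1)$, and the overhead from prefixing each concatenated string with its length contributes $O(\log|Y|)$ per piece (absorbing the $\log|X|$ contribution into $\log|Y|$ via the crude bound $|X|\leq |V(G)|$, or equivalently into the implicit constant).

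Finally, conditioning on more information can only decrease Kolmogorov complexity up to $O(1)$, so $C(\mc{P}\,|\,G[X],G[Y]) \leq C(\mc{P}\,|\,G[Y])+O(1)$. Substituting this into the previous inequality gives the claimed bound. There is no real obstacle here: the argument is purely a bookkeeping exercise combining (C1) and (C2), entirely parallel to the proof of \cref{o:partition1}, with the only mild subtlety being the accounting of the $O(|\mc{P}|\log|Y|)$ concatenation overhead.
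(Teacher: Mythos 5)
Your argument is correct and is exactly the bookkeeping the paper has in mind: the paper gives no separate proof, stating that \cref{o:partition2} "straightforwardly follows from \cref{o:Kolmogorov1}" just as \cref{o:partition1} does, and your reconstruction-plus-chain-rule argument with the $O(|\mc{P}|\log|Y|)$ concatenation overhead is that intended proof. The only cosmetic quibble is your aside that a $\log|X|$ prefix cost could be "absorbed into the implicit constant" (it cannot in general, only into $O(\log|V(G)|)$), but in every application of the observation $X$ is a bounded-size set and $Y$ a linear-size part, so this does not affect correctness where it is used.
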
	
 
 Additionally we need a corollary of \cref{l:bases}.
 
\begin{cor}\label{c:homogenous2}
	Let $G$ be a graph on $n$ vertices, let $X,Y \subseteq V(G)$ be disjoint, and let $\mc{A}$ be a $(k,s)$-base on $Y$. Assume that $X$ is ordered, let $x =|X|$, and let $J$ be an $(x,k)$-ordered bigraph. Let $\mc{A}'$ be the set of all $A \in \mc{A}$ such that $G[X,A]$ is isomorphic to $J$. If $(kx+2)^2 2^{kx+2}|\mc{A}'| \leq s$  then \begin{equation}\label{e:homog}C_G(X,Y) \leq x|Y| +  C(\mc{A} | G[Y]) -  s/2^{kx+2}
	+ O(\log n).\end{equation}
\end{cor}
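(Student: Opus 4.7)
The strategy is to build a compact description of $G[X,Y]$ given $G[X]$, $G[Y]$, and $\mc{A}$, by exploiting the sparsity of $\mc{A}'$ inside $\mc{A}$. A naive attempt to apply \cref{l:bases} to the sub-base $\mc{A}-\mc{A}'$ on $Y$ with the property ``not isomorphic to $J$'' fails, because it would force us to spend $\log\binom{|\mc{A}|}{|\mc{A}'|}$ bits to specify $\mc{A}'$ explicitly; this cost potentially exceeds the saving $|\mc{A}-\mc{A}'|/2^{xk}$ when $\log n$ is large relative to $(xk+2)^{2}$. To circumvent this, I would encode the entire sequence $(G[X,A])_{A\in\mc{A}}$ at once, so that $\mc{A}'$ is \emph{recovered} from the encoding rather than paid for separately.

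The encoding has three pieces: for each $v \in Y\setminus\bigcup_{A\in\mc{A}}A$ spend the trivial $x$ bits describing adjacencies to $X$, contributing $x(|Y|-k|\mc{A}|)$ bits; record the integer $|\mc{A}'|\in[0,n]$ in $O(\log n)$ bits; and, letting $c\ge 1$ denote the number of ordered $(x,k)$-bigraphs isomorphic to $J$ and setting $p := c/2^{xk}\ge 2^{-xk}$, specify $(G[X,A])_{A\in\mc{A}}$ as an index into the set of all sequences of $|\mc{A}|$ ordered $(x,k)$-bigraphs with at most $|\mc{A}'|$ members isomorphic to $J$. The size of this latter set equals
\[
N \;=\; \sum_{j=0}^{|\mc{A}'|}\binom{|\mc{A}|}{j}\,c^{j}\,(2^{xk}-c)^{|\mc{A}|-j} \;=\; 2^{xk|\mc{A}|}\cdot\Pr\!\bigl[\mathrm{Bin}(|\mc{A}|,p)\le|\mc{A}'|\bigr].
\]

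The main technical step is a lower-tail Chernoff estimate. The mean $|\mc{A}|p$ exceeds $s/2^{xk}$ since $|\mc{A}|\ge s$, while the hypothesis $(kx+2)^{2}2^{kx+2}|\mc{A}'|\le s$ gives $|\mc{A}'|/(|\mc{A}|p)\le 1/(4(kx+2)^{2})\le 1/16$, so the relative deviation $\delta$ below the mean is at least $15/16$. The standard bound $\Pr[\mathrm{Bin}(n,p)\le(1-\delta)np]\le \exp(-np\delta^{2}/2)$, converted to base-$2$ logarithms, then yields $\log_{2} N \le xk|\mc{A}| - c_{0}\,s/2^{xk}$ for an absolute constant $c_{0}>1/4$. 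Summing the three pieces of the encoding and adding the $C(\mc{A}\,|\,G[Y])$ bits needed to specify $\mc{A}$ itself gives
\[
C_G(X,Y) \;\le\; C(\mc{A}\,|\,G[Y])+x|Y|-c_{0}\,s/2^{xk}+O(\log n) \;\le\; C(\mc{A}\,|\,G[Y])+x|Y|-s/2^{xk+2}+O(\log n),
\]
where the final inequality uses $c_{0}>1/4$. The main obstacle is calibrating the Chernoff constant: the quadratic factor $(kx+2)^{2}$ in the hypothesis is precisely what forces $\delta$ to be bounded below by an absolute constant independent of $x$ and $k$, which in turn keeps the Chernoff saving linear in $s$ with an absolute constant strictly greater than $1/4$.
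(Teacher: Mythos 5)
Your proof is correct: encoding the adjacencies of the vertices of $Y$ outside $\bigcup_{A\in\mc{A}}A$ trivially, and the whole sequence $(G[X,A])_{A\in\mc{A}}$ as an index into the set of sequences having at most $|\mc{A}'|$ coordinates isomorphic to $J$, does work, and under the hypothesis $(kx+2)^2 2^{kx+2}|\mc{A}'|\le s$ the relative deviation in your binomial lower tail is bounded below by an absolute constant, so the Chernoff estimate gives a saving $c_0 s/2^{kx}$ with $c_0>1/4$, which is what is needed. However, your stated reason for rejecting the ``naive'' route is a miscalculation, and that naive route is exactly the paper's proof: it sets $\mc{B}=\mc{A}-\mc{A}'$, pays for $\mc{A}'$ via \cref{c:Kolmogorov2}~(C3) as a sparse subset \emph{of $\mc{A}$} (cost $|\mc{A}'|(\log|\mc{A}|-\log|\mc{A}'|)+2|\mc{A}'|+O(\log n)$, whose main term is at most $|\mc{A}'|\bigl(\log((kx+2)^2 2^{kx+2})+2\bigr)\le s/2^{kx+2}$ under the hypothesis; there is no $\log n$ factor because $\mc{A}'$ is specified relative to the enumeration of $\mc{A}$, not by listing vertices), and then applies \cref{l:bases} to $\mc{B}$ with the property ``not isomorphic to $J$'', saving at least $|\mc{B}|/2^{kx}\ge (1-o(1))s/2^{kx}$; the quadratic factor $(kx+2)^2$ in the hypothesis is there precisely to absorb the $\log(\alpha^2 2^{\alpha})$ in this specification cost, playing the same calibrating role it plays in your argument. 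Indeed the two proofs are information-theoretically the same count: the dominant term $j=|\mc{A}'|$ of your sum for $N$ has logarithm $\log\binom{|\mc{A}|}{|\mc{A}'|}+|\mc{A}'|\log c+(|\mc{A}|-|\mc{A}'|)\log(2^{xk}-c)$, i.e.\ ``specify $\mc{A}'$ inside $\mc{A}$, then spend slightly fewer than $xk$ bits on each non-exceptional coordinate,'' which is the paper's decomposition. What your packaging buys is a single clean tail estimate in place of the paper's two bookkeeping steps ((C3) plus \cref{l:bases}); what it requires is the routine remark that your index is decodable given $x$, $k$, $J$, $|\mc{A}'|$ and a canonical ordering of $\mc{A}$, which your $O(\log n)$ term absorbs just as the paper's does.
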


\begin{proof}
	
	Let $\mc{B}=\mc{A} - \mc{A}'$. Note that 	\begin{align}
 C(\mc{B}|G[Y]) &\leq  C(\mc{A} | G[Y]) +  C(\mc{A'}|\mc{A}) + O(\log n) \notag\\ 
	 &\leq C(\mc{A} | G[Y]) + |\mc{A}'|(\log|\mc{A}| - \log|\mc{A}'|) + 2|\mc{A}'| + O(\log n),\label{e:homog-1}
	\end{align}
	where the last inequality uses \cref{c:Kolmogorov2} (C3).
	Let $\alpha = kx+2$. Then $\alpha \geq 3$.
	As $\alpha^2 2^{\alpha}|\mc{A}'|~\leq~s=|\mc{A}|$ we have 
	\begin{align*}  |\mc{A}'|(\log|\mc{A}|  - \log|\mc{A}'|) + 2|\mc{A}'| \leq \frac{s(2+\log(\alpha^2 2^{\alpha}))}{\alpha^2 2^{\alpha}} \leq  \frac{s}{2^{\alpha}},
	\end{align*}  
	where the last inequality holds as $2+2 \alpha\log \alpha \leq \alpha^2$ for every $\alpha \geq 3$.
	Substituting this inequality into (\ref{e:homog-1}) we obtain
	$$ C(\mc{B}|G[Y]) \leq  \frac{s}{2^{{kx+2}}} + C(\mc{A} | G[Y]) + O(\log n).	$$
	Finally, combining this inequality with (\ref{e:bases1}),  we have 
	\begin{align*}C_G(X,Y) &\leq  x|Y|  + C(\mc{B}|G[Y]) - (kx - \log(2^{kx}-1))|\mc{B}| + O(\log n)
	 \\ &\leq  x|Y|  + C(\mc{A} | G[Y]) - (kx - \log(2^{kx}-1))|\mc{B}| + \frac{s}{2^{kx+2}}+ O(\log n) \\ & \leq x|Y|  + C(\mc{A} | G[Y]) - \frac{s}{2^{kx+1}} + \frac{s}{2^{kx+2}} + O(\log n),  \end{align*}
	implying (\ref{e:homog}), as desired. 
\end{proof}

Our next ingredient is an easy consequence of Ramsey's theorem. We say that $Z \subseteq V(G)$ is \emph{homogenous} (in $G$) if $Z$ is a clique or a stable set.

\begin{lem}\label{l:Ramsey} Let $r$ be an integer, and 
	let $G$ be a graph on $n$ vertices. Then there exist disjoint homogenous $r$ element subsets $Z_1,Z_2,\ldots Z_t$ of $V(G)$ such that $tr \geq n - 4^r$.
\end{lem}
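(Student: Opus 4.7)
The plan is a greedy application of the standard Ramsey-number bound $R(r,r) \le \binom{2r-2}{r-1} < 4^r$. First I would recall (or quote) this classical estimate, which guarantees that every graph on at least $4^r$ vertices contains a homogenous $r$-element subset (a clique or an independent set of size $r$).

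Now I would build the sets $Z_1,Z_2,\ldots$ iteratively. Set $G_0 = G$, and as long as the current graph $G_i$ has at least $4^r$ vertices, apply the Ramsey bound to extract a homogenous $r$-element subset $Z_{i+1} \subseteq V(G_i)$, and set $G_{i+1} = G_i \setminus Z_{i+1}$. Note that $Z_{i+1}$ is homogenous in $G_i$, hence also in $G$, since homogeneity is preserved under taking induced subgraphs of a supergraph containing the same vertices. Moreover, the $Z_i$ are pairwise disjoint by construction.

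The process terminates with some $G_t$ satisfying $|V(G_t)| < 4^r$. Since $|V(G_t)| = n - tr$, we obtain $tr > n - 4^r$, which yields the desired bound $tr \ge n - 4^r$.

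The only subtlety — if it can even be called that — is the choice of the Ramsey bound; any estimate of the form $R(r,r) \le c^r$ with $c \le 4$ suffices, and the standard Erd\H{o}s--Szekeres bound $\binom{2r-2}{r-1} < 4^r$ gives exactly the constant appearing in the statement. There is no substantial obstacle here; the lemma is essentially a packaging of iterated Ramsey extraction.
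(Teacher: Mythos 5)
Your proof is correct and uses essentially the same argument as the paper: the paper takes a maximal collection of disjoint homogenous $r$-sets and notes that the leftover graph must have fewer than roughly $4^r$ vertices by Ramsey's theorem, which is just the contradiction-form of your greedy extraction. The observation that homogeneity in the induced subgraph $G_i$ is the same as homogeneity in $G$ is the only point worth checking, and you handled it correctly.
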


\begin{proof} Let $Z_1,Z_2,\ldots Z_t$  be a maximal collection of disjoint homogenous $r$ element subsets of $V(G)$. Let $G' = G \setminus  (\cup_{i=1}^t Z_i)$. Then $|V(G')|= n - tr$. If $|V(G')| > 4^r$ then
	by Ramsey theorem $G'$ contains a homogenous subset of size $r$, in contradiction to the choice of $Z_1,\ldots Z_t$. Thus $|V(G')| \leq 4^r$, implying the desired inequality. 
\end{proof}

\begin{lem}\label{l:homogenous} For all $\eps,k >0$ there exist $\delta,n_0>0$ satisfying the following. Let $G$ be a graph with $|V(G)|=n \geq n_0$, and let $v\in V(G)$ be such that $C_G(v) \geq \eps n$. Then there exist disjoint homogenous subsets $Z_1,Z_2,\ldots,Z_m$ of $V(G)$ such that $m \geq \delta n$, and $v$ has at least $k$ neighbors and at least $k$ non-neighbors in $Z_i$.
\end{lem}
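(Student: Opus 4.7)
The plan is to combine Ramsey's theorem (\cref{l:Ramsey}) with a compressed encoding argument: if too many of the homogeneous subsets of $V(G) - v$ produced by Ramsey fail to split $v$'s neighborhood with at least $k$ vertices on each side, then the adjacencies of $v$ admit a short description, which contradicts the hypothesis $C_G(v) \geq \eps n$.

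Fix $r = r(\eps, k)$ large enough that $(1 + c k \log r)/r < \eps/2$, where $c$ is the constant implicit in part (C3) of \cref{c:Kolmogorov2}; set $\delta = \eps/(10r)$; and take $n_0 = n_0(\eps, k)$ large enough to absorb the terms $4^r$ and $O(\log n)$ below. Apply \cref{l:Ramsey} to $G \setminus v$ with parameter $r$ to obtain disjoint homogeneous $r$-element sets $Z_1, \ldots, Z_t \subseteq V(G) - v$ with $tr \geq n - 1 - 4^r$, in particular $t \leq n/r$. Call $Z_i$ \emph{good} if $v$ has at least $k$ neighbors and at least $k$ non-neighbors in $Z_i$, and \emph{bad} otherwise, and let $m$ denote the number of good sets. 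It suffices to show $m \geq \delta n$.

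Suppose, for contradiction, that $m < \delta n$. Since $G[V(G) - v]$ deterministically determines a canonical Ramsey decomposition (the greedy one underlying the proof of \cref{l:Ramsey}), we can encode $G[\{v\}, V(G) - v]$ given $G[V(G) - v]$ as follows: for each $i \in [t]$, write one bit labelling $Z_i$ as good or bad; if $Z_i$ is good, append the $r$-bit adjacency vector of $v$ into $Z_i$; if $Z_i$ is bad, append one additional bit indicating whether $v$ has fewer than $k$ neighbors or fewer than $k$ non-neighbors in $Z_i$, together with $O(k \log r)$ bits naming the up to $k-1$ exceptional vertices via (C3); finally, spell out the $\leq 4^r$ adjacencies of $v$ to the leftover $V(G) - v - \bigcup_i Z_i$. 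The total length is at most
\[
t + mr + (t-m)\cdot O(k \log r) + 4^r + O(\log n) \leq \frac{n}{r}\bigl(1 + O(k \log r)\bigr) + \delta n r + 4^r + O(\log n),
\]
which by the choices of $r$, $\delta$, and $n_0$ is strictly less than $\eps n$. This contradicts $C_G(v) \geq \eps n$, and so $m \geq \delta n$, at which point $Z_1, \ldots, Z_m$ can be taken to be the good sets.

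The delicate step is calibrating $r$ and $\delta$ jointly: $r$ must be large relative to $k$ so that each bad set saves roughly $r$ bits against the naive $r$-bit encoding, but this forces $\delta$ to decay like $\eps/r$ so that the aggregate length $mr$ of good-set encodings does not itself exceed the $\eps n$ budget. All remaining ingredients — Ramsey, (C3) for small subsets, and the fact that the Ramsey decomposition can be recovered for free from $G[V(G) - v]$ — slot in routinely once $r$ and $\delta$ are fixed.
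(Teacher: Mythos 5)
Your proposal is correct and follows essentially the same route as the paper: apply \cref{l:Ramsey} to $G\setminus v$ to get a computable decomposition into homogeneous $r$-sets, assume fewer than $\delta n$ of them are ``good,'' and derive a contradiction with $C_G(v)\geq \eps n$ by compressing $v$'s adjacencies --- paying $r$ bits per good set and only $O(k\log r)$ bits per bad set (the at most $k-1$ exceptional vertices), which is exactly the paper's bound $\h(1,r,\mc{P})\leq\log\bigl(2k\binom{r}{k}\bigr)$. The only differences are bookkeeping: you write the encoding explicitly with per-set flags, while the paper routes the same estimate through \cref{o:partition2}, inequality (\ref{e:bases1}) and (C3), with correspondingly different but equivalent choices of $r$ and $\delta$.
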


\begin{proof} Let  a positive integer $r$ be chosen so that $2k \binom{r}{k} \leq  2^{\eps r/4}$, and let $\delta >0 $ be such that $\delta(2-\log {\delta}) \leq \eps /4$ and $\delta r \leq \eps/4$. 

   By Lemma~\ref{l:Ramsey} there exists a (computable) collection disjoint homogenous subsets $\mc{Z}$ of $V(G)- \{ v \}$ such that $|Z|=r$ for every $Z \in \mc{Z}$, and $r|\mc{Z}| \geq n -1 - 4^r$. Let $\mc{B}$ be the collection of all sets $Z_i$ such that $v$ has  at least $k$ neighbors and at least $k$ non-neighbors in $Z_i$. Let $m = |\mc{B}|$. If   $m \geq \delta n$ then the lemma holds, and so we suppose that $m \leq \delta n$.
   
   By Corollary~\ref{c:Kolmogorov2}  the collection $\mc{A} = \mc{Z} - \mc{B}$ of the remaining subsets satisfies \begin{align}\label{e:hom11} C(\mc{A} | G \setminus v) &\leq m (\log n - \log m)+ 2m +  O(\log n) \notag\\ &\leq -\delta \log\delta n + 2\delta n + O(\log n) \leq \frac{\eps n}{4}+ O(\log n),\end{align}
   where the last inequality holds by the choice of $\delta$. 
Let $\mc{P}$ be the family of all bigraphs which have fewer than $k$ edges or fewer than $k$ non-edges. Note that $|\mc{A}| \leq n/r$  and $$|V(G)-\cup_{A \in \mc{A}}A| \leq mr +4^r+1.$$ Thus by Obervation~\ref{o:partition2} and (\ref{e:bases1}),  we have \begin{align}\label{e:hom12} C_G(v) &\leq C_G(\{v\},V(G)-\cup_{A \in \mc{A}}A)+C_G(\{v\},\cup_{A \in \mc{A}}A) + C(\mc{A} | G \setminus v) + O(\log n) \notag\\ &\leq mr + 4^r + 1 + C(\mc{A} | G \setminus v)  + h(1,r,\mc{P})\frac{n}{r} + O(\log n).\end{align}
Clearly, $h(1,r,\mc{P}) \leq \log ( 2k \binom{r}{k} ) \leq \eps r/4$, where the last inequality holds by the choice of $r$.  This inequality, together with (\ref{e:hom11}) and (\ref{e:hom12}) implies 
\begin{align*} C_G(v) \leq \delta nr  + 4^r + 1  + \frac{\eps n}{2}+ O(\log n) \leq \frac{ 3}{4}\eps n + 4^r + O(\log n). 
\end{align*}
Thus $C_G(v) < \eps n$, as long as $n$ is sufficiently large compared to $1/\eps$ and $r$, which is the desired contradiction. 
\end{proof}

\subsection{Typical structure of apex-free hereditary families}

With most of the ingredient in place we embark on the analysis of graphs in apex-free hereditary families.

The following lemma is the first step in our argument that uses the assumption that  $\mc{F}$ is apex-free.

\begin{lem}\label{l:vertexcritical} Let $\mc{F}$ be an apex-free hereditary family and let $l =\wpn(\mc{F})$. Then there exists $k >0$ such that for every $\eps>0$ there exist  $\delta,n_0 >0$ such for every $G \in \mc{F}$  with $|V(G)|=n \geq n_0$, every $(l,k,\delta n)$-perfect approximation $\mc{X}$ of $G$, and every $v \in V(G)$ we have $C_G(v, X - \{v\}) \leq \eps n$ for some $X \in \mc{X}$. 
\end{lem}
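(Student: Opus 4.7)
The plan is to argue by contradiction using the apex-free hypothesis in the form of Observation~\ref{o:apexfree}. For each $s \in \{0, 1, \ldots, l\}$ fix once and for all a graph $H_s \notin \mc{F}$ with a vertex $u_s \in V(H_s)$ and a partition $V(H_s) \setminus \{u_s\} = V_1^{(s)} \cup \cdots \cup V_l^{(s)}$ in which $V_i^{(s)}$ is a stable set for $i \le s$ and a clique for $i > s$. Set $k := \max_s |V(H_s)|$; this is the constant that appears in the statement of the lemma. Given $\eps > 0$, the parameters $\delta$ and $n_0$ will be selected at the end so as to reconcile Lemma~\ref{l:homogenous} with Lemma~\ref{l:unrestricted}.

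Fix $G \in \mc{F}$, a perfect approximation $\mc{X} = (X_1, \ldots, X_l)$, and $v \in V(G)$ as in the lemma, and suppose toward contradiction that $C_G(v, X_i \setminus \{v\}) > \eps n$ for every $i \in [l]$. Because the perfect approximation is balanced, $|X_i \cup \{v\}|$ lies within a constant factor of $n$, and hence $C_{G[X_i \cup \{v\}]}(v)$ is linear in $|X_i|$. Applying Lemma~\ref{l:homogenous} inside each $G[X_i \cup \{v\}]$ with the constant $k$ produces disjoint homogeneous subsets $Z_{i,1}, \ldots, Z_{i,m_i} \subseteq X_i$ with $m_i = \Omega(n)$, in each of which $v$ has at least $k$ neighbors and at least $k$ non-neighbors. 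For each $i$ let $\sigma(i) \in \{S,C\}$ record the majority homogeneous type among the $Z_{i,j}$, set $s := |\sigma^{-1}(S)|$, and after relabeling assume $\sigma^{-1}(S) = \{1, \ldots, s\}$.

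Now take $H := H_s$, $u := u_s$, $V_i := V_i^{(s)}$, and $k_i := |V_i|$. For each $i$ and each $Z_{i,j}$ of the correct majority type, choose a $k_i$-subset $A \subseteq Z_{i,j}$ such that $G[\{v\}, A]$ is isomorphic to the bigraph $H[\{u\}, V_i]$; this is possible because $v$ has at least $k \ge k_i$ neighbors and at least $k \ge k_i$ non-neighbors in $Z_{i,j}$, and any adjacency pattern between $v$ and a $k_i$-set can thus be realized, while $G[A] \cong H[V_i]$ is automatic from the match of homogeneous type. The resulting family $\mc{A}_i$ is a $(k_i, m_i/2)$-base on $X_i$, all of whose members display the prescribed adjacency to $v$ and the prescribed internal structure. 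Because $\mc{X}$ is $(k, \delta n)$-unrestricted by the definition of an $(l,k,\delta n)$-perfect approximation, and because we choose $\delta$ small enough that $m_i/2 \ge (l-1)\delta n$, Lemma~\ref{l:unrestricted} yields $A_i \in \mc{A}_i$ for each $i$ with $G[A_i, A_j]$ isomorphic to $H[V_i, V_j]$ for all $i < j$. The induced subgraph of $G$ on $\{v\} \cup A_1 \cup \cdots \cup A_l$ is then a copy of $H = H_s$, contradicting $G \in \mc{F}$.

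The main technical obstacle is the hierarchy of constants. The parameter $k$ has to be fixed before $\eps$ is quantified, so it must be read off the finite list of apex graphs $H_0, \ldots, H_l$, and must simultaneously dominate $|V(H_s)|$ (to guarantee enough neighbors and non-neighbors for arbitrary adjacency patterns) and $\sum_i |V_i^{(s)}|$ (as required to apply Lemma~\ref{l:unrestricted}). Once $k$ and $\eps$ are fixed, $\delta$ is tuned so that the $\Omega(n)$ homogeneous subsets supplied by Lemma~\ref{l:homogenous} comfortably exceed the $(l-1)\delta n$ threshold required by Lemma~\ref{l:unrestricted}, while the balance error $\delta n$ of the perfect approximation keeps $\eps n$ linear in $|X_i|$. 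The one genuinely nontrivial point in the construction is that the index $s$ is read \emph{off} of $\mc{X}$ via the majority rule $\sigma$ and only then is the apex graph $H_s$ selected; the whole argument works precisely because Observation~\ref{o:apexfree} provides such an $H_s$ for \emph{every} value of $s \in \{0, \ldots, l\}$, which is the entire content of the apex-free hypothesis.
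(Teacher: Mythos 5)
Your proposal is correct and follows essentially the same route as the paper's proof: assume $C_G(v,X_i\setminus\{v\})>\eps n$ for all $i$, apply Lemma~\ref{l:homogenous} within each part to obtain linearly many homogeneous sets in which $v$ has at least $k$ neighbors and $k$ non-neighbors, read off $s$ from the (majority) homogeneous types, select the apex graph $H_s$ supplied by Observation~\ref{o:apexfree}, build bases realizing the prescribed adjacency to $v$, and use the unrestrictedness of the perfect approximation via Lemma~\ref{l:unrestricted} to embed $H_s$, contradicting $G\in\mc{F}$. The constant hierarchy you describe ($k$ from the finite list $H_0,\dots,H_l$, then $\delta$ tuned against Lemma~\ref{l:homogenous} and the $(l-1)\delta n$ threshold) matches the paper's choice of parameters.
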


\begin{proof} By \cref{o:apexfree} there exists $k$ such that for all $0 \leq s \leq l$ there exists a graph $H(s) \not \in \mc{F}$ such that $|V(H(s))| \leq k$, $H(s) \setminus u \in \mc{H}(s,l-s)$ for some $u \in V(H)$. Let $\delta' \leq 1$ and $n_0'$ be chosen to satisfy Lemma~\ref{l:homogenous} for $\eps$ and $k$. We show that $\delta = \delta'/(6l^2)$  and $n_0= \max\{(l+1)n_0', 1/\delta\}$  satisfy the theorem.
	
Let $\mc{X} = (X_1,X_2,\ldots,X_l)$ be a $(l,k,\delta n)$-perfect approximation of $G$.
Suppose for a contradiction that $C_G(v,X_i) \geq \eps n$ for all $1 \leq i \leq l$. 	
 Let $m=\lceil \frac{\delta' n}{3l} \rceil$. By the choice of $\delta'$ for every $i \in [l]$ there exists a collection  $\mc{A}_i$ of $2k$ element subsets of $X_i$ such that $|\mc{A}_i|=m$ and either every set in $\mc{A}_i$ is a clique or every set induces a stable set, and $v$ has at least $k$ neighbors and at least $k$ non-neighbors in every set in $\mc{A}_i$. Without loss of generality we assume that the sets in $\mc{A}_i$ are stable sets for $i \leq s$ and cliques for $i >s$.

Let $H=H(s)$ be as in the first paragraph of the proof. We assume that $V(H)$ is ordered and let $\{u\},X_1,\ldots,X_l$ be a partition of $V(H)$ such that $X_i$ is a stable set for  $i \leq s$ and cliques for $i >s$. By restricting the sets in $\mc{A}_i$ and ordering them we can assume that $\mc{A}_i$ is a $(|X_i|,m)$-base and $G[v,A_i]$ and  $H[u,X_i]$ are isomorphic ordered bigraphs for all $A_i \in \mc{A}_i$. As $$m \geq \frac{\delta' n}{3 l} = 2l\delta n \geq l \lceil \delta n \rceil,$$ it follows that $\mc{X}$ is $(k,\lfloor m/l \rfloor)$-unrestricted. By Lemma~\ref{l:unrestricted} there exist $A_1 \in \mc{A}_1, A_2 \in \mc{A}_2 \ldots, A_l \in \mc{A}_l$ such that $G[A_i,A_j]$ and $H[X_i,X_j]$ are isomorphic for all $1 \leq i \leq j \leq l$.
Combining these isomorphisms we obtain an isomorphism between $G[\{v\} \cup A_1 \cup \ldots \cup A_l]$ and $H$, which is a contradiction as $G$ is $H$-free. 
\end{proof}

The final steps in the proof of Theorem~\ref{t:critical} require lower bounds on the growth of $\h(\mc{F},n)$.
For a family $\mc{F}$ and a positive integer $n$ define $$\Delta_k(\mc{F},n) = \min_{1 \leq d \leq k} \frac{\h(\mc{F},n) - \h(\mc{F},n-d) }{d}.$$
The following easy observation relates $\Delta_k(\mc{F},n)$ and conditional Kolmogorov complexity of small vertex subsets in a typical graph in $\mc{F}$.

\begin{obs}\label{o:smooth}
	Let  $\mc{F}$ be a hereditary family. Then for almost all $G \in \mc{F}$ with $|V(G)|=n$ for all $Z \subseteq V(G)$ we have \begin{equation}C_G(
	Z) \geq \Delta_{|Z|}(\mc{F},n)|Z| - \binom{|Z|}{2}- |Z|\log n - O(\log n) \label{e:smooth}\end{equation}
\end{obs}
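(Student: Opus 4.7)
The plan is to bound $C(G)$ in two ways: from above by an encoding that separates $Z$ from its complement, and from below via property (C5). Fix $G \in \mc{F}$ with $V(G) = [n]$ and $Z \subseteq V(G)$ of size $d$, and set $W = V(G) - Z$. Since $G$ can be reconstructed from the quadruple $(Z,\; G[Z],\; G[W],\; G[Z,W])$, applying (C1) iteratively (with $G[Z,W]$ encoded conditionally on $G[Z]$ and $G[W]$, which is at most $C_G(Z)$) yields
$$C(G) \leq C(Z) + C(G[Z]) + C(G[W]) + C_G(Z) + O(\log n).$$
Now $C(Z) \leq d \log n + O(\log n)$ by trivially enumerating $d$-element subsets of $[n]$, $C(G[Z]) \leq \binom{d}{2} + O(1)$ by (C0), and $C(G[W]) \leq \h(\mc{F},n-d) + O(\log n)$ by (C4), where the last bound uses that $G[W] \in \mc{F}$ since $\mc{F}$ is hereditary.

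For the lower bound, we invoke (C5) with, say, $f(n) = \log n$ to conclude that for almost all $G \in \mc{F}^n$ we have $C(G) \geq \h(\mc{F},n) - O(\log n)$. This condition depends on $G$ alone and not on the choice of $Z$, so once we fix any such $G$, combining it with the upper bound above (whose $O(\log n)$ hidden constant is uniform in $Z$) gives, simultaneously for every $Z \subseteq V(G)$,
$$C_G(Z) \geq \h(\mc{F},n) - \h(\mc{F},n-|Z|) - \binom{|Z|}{2} - |Z|\log n - O(\log n).$$

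To finish we appeal to the definition of $\Delta_{|Z|}(\mc{F},n)$: as the minimum of $(\h(\mc{F},n) - \h(\mc{F},n-d))/d$ over $1 \leq d \leq |Z|$, specializing to $d = |Z|$ gives $\h(\mc{F},n) - \h(\mc{F},n-|Z|) \geq |Z|\,\Delta_{|Z|}(\mc{F},n)$. Substituting this into the previous display produces exactly \eqref{e:smooth}. There is essentially no serious obstacle in this argument; the only point requiring a bit of care is ensuring that both the upper bound on $C(G)$ is uniform in $Z$ (so that the $O(\log n)$ does not depend on $Z$) and that the lower bound from (C5) is $Z$-independent, so that the "for almost all $G$, for all $Z$" quantification order in the conclusion is justified.
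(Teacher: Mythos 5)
Your overall architecture---encode $G$ by the quadruple $(Z,\,G[Z],\,G[W],\,G[Z,W])$, bound $C(G)$ from above termwise, and play this against a typicality lower bound on $C(G)$---is sound, and your bookkeeping of the quantifiers ("for almost all $G$, for all $Z$") at the end is handled correctly. The gap is the step $C(G[W]) \leq \h(\mc{F},n-d)+O(\log n)$ "by (C4)": property (C4) is stated, and is only true, for \emph{computable} graph properties, since an upper bound of the form "index into an enumeration of $\mc{P}^{m}$" requires that enumeration to be algorithmically available. The observation, however, concerns an arbitrary hereditary family; there are uncountably many hereditary families, so in general $\mc{F}$ admits no algorithmic description, and no bound $C(G') \leq \h(\mc{F},m)+O(\log m)$ for members $G' \in \mc{F}^m$ is licensed. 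Note that the paper is careful to invoke (C4) only for computable families such as $\Forb(J)$, and conspicuously does not use it in the proof of this observation.

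The paper's own proof avoids the issue by staying at the level of counting rather than complexity: the number of triples $(Z,G[Z],G[[n]-Z])$ with $|Z|=k$ is at most $2^{k\log n+\binom{k}{2}+\h(\mc{F},n-k)+O(\log n)}$, where hereditariness enters only through the cardinality $|\mc{F}^{n-k}|$ and no effectivity is needed; every $G\in\mc{F}^n$ violating \eqref{e:smooth} for some $Z$ with $|Z|=k$ is then uniquely determined by such a triple, $k$, and a binary string of length $C_G(Z)$ witnessing the conditional complexity. Summing over $k$ and using $\Delta_k(\mc{F},n)\,k\leq \h(\mc{F},n)-\h(\mc{F},n-k)$ (the same specialization you use) shows that the violating graphs form an $o(1)$ fraction of $\mc{F}^n$ once the $O(\log n)$ constant is chosen large enough; this also dispenses with (C5). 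Your argument becomes correct essentially verbatim if you either add a computability hypothesis on $\mc{F}$ (which the statement does not have) or replace the (C4)+(C5) pair by this direct union-bound count; the remaining steps, including the use of (C1), the bound $C(Z)\leq |Z|\log n+O(\log n)$, and the passage from $\h(\mc{F},n)-\h(\mc{F},n-|Z|)$ to $\Delta_{|Z|}(\mc{F},n)|Z|$, are fine.
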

\begin{proof} 
    The number of possible triples  of the form $(Z,G[Z],G[[n]-Z])$ where $G \in \mc{F}^n$ and  $Z \subseteq V(G)$ with $|Z| = k$ is bounded from above by $$2^{k \log n + \binom{k}{2} + \h(\mc{F},n-|Z|)+ O(\log n) } \leq 2^{\h(\mc{F},n)}2^{k\log n + \binom{k}{2} - \Delta_k(\mc{F},n)|Z| + O(\log n)}.$$
	 The inequality (\ref{e:smooth}) follows as  $G \in \mc{F}$  is uniquely determined by such a triple, $k$ and a binary string of length $C_G(Z)$.  
\end{proof}

The next lemma is the main remaining step in the proof of Theorem ~\ref{t:critical}.

\begin{lem}\label{l:supercritical}	Let $\mc{F}$ be an apex-free hereditary family, let $l=\wpn(G) \geq 2$.  Let $\mc{K} \subseteq \crit(\mc{F})$ be a finite set of  graphs. Then there exists an integer $k$ and $\delta>0$ such that for almost all $G \in \mc{F}$  on $n$ vertices, if $\Delta_k(\mc{F},n) \geq ((l-1)/l - \delta)n $ then there exists a $(\Forb(\mc{K}),l)$-partition of $V(G)$.   
\end{lem}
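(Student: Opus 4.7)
The plan is to start from a perfect approximation $\mc{X} = (X_1, \ldots, X_l)$ supplied by \cref{t:approx} and upgrade it to a genuine partition whose parts are $\mc{K}$-free. Choose $k$ large enough to simultaneously (i) satisfy \cref{l:vertexcritical} for $\mc{F}$, and (ii) exceed $|V(H_{K,s})|$ for each of the finitely many witness graphs $H_{K,s} \in \mc{P}(\iota(K),\mc{H}(s,l-1-s)) \setminus \mc{F}$ with $K \in \mc{K}$ and $s \in \{0,\ldots,l-1\}$ (these witnesses exist because every $K \in \mc{K} \subseteq \dang(\mc{F})$ is dangerous). For almost every $G \in \mc{F}$, \cref{t:approx} produces an $(l,k,n^{1-\eps})$-perfect approximation $\mc{X}$; using \cref{l:vertexcritical} with a small error parameter $\eps'$, assign each $v \in V(G)$ to some $\pi(v) \in [l]$ with $C_G(v,X_{\pi(v)}-\{v\}) \leq \eps' n$, and set $Y_i := \pi^{-1}(i)$. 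The hypothesis on $\Delta_k(\mc{F},n)$ together with \cref{o:smooth} forces $C_G(v) \geq ((l-1)/l - \delta)n - O(\log n)$ for typical $G$, which by subadditivity of Kolmogorov complexity (\cref{o:Kolmogorov1}) makes $\pi(v)$ essentially unique once $\eps' + \delta < 1/(2l)$, and pins $\pi(v) = i$ whenever $v$ is generic in $X_i$; a standard counting argument then yields $|Y_i \triangle X_i| = o(n)$ for almost all $G$.

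The core step is to show $G[Y_i] \in \Forb(\mc{K})$ for each $i$. Suppose to the contrary that $G[Y_i]$ contains an induced copy of some $K \in \mc{K}$ on a vertex set $W$; we derive a contradiction by embedding $H_{K,s}$ in $G$ for an appropriate $s$. Using $|Y_i \triangle X_i| = o(n)$ we may reduce to the case $W \subseteq X_i$. For each $j \neq i$, apply \cref{l:Ramsey} in $X_j$ to obtain $\Omega(|X_j|/k)$ pairwise disjoint homogeneous $k$-subsets, at least half of which share a common type $t_j \in \{\text{stable},\text{clique}\}$; pigeonhole over the constantly many bipartite patterns with $W$ further refines to a base $\mc{A}_j$ of size $\Omega(n) \gg (l-2)n^{1-\eps}$ whose members all exhibit one prescribed bipartite pattern to $W$. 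Set $s := |\{j \neq i : t_j = \text{stable}\}|$, choose the prescribed $W$-patterns to mirror the bipartite structures of $H_{K,s}$ between its $K$-labelled part and each other part, and invoke \cref{l:unrestricted} on the $(k,n^{1-\eps})$-unrestricted collection $(X_j)_{j \neq i}$ with cross-bigraphs dictated by $H_{K,s}$. The resulting representatives $A_j \in \mc{A}_j$ together with $W$ induce a copy of $H_{K,s}$ in $G$, contradicting $G \in \mc{F}$ and $H_{K,s} \notin \mc{F}$.

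The main obstacle is orchestrating the combinatorial choices: the types $t_j$ are dictated by the homogeneous structure available in each $X_j$ rather than selected freely, so $s$ is forced by $G$ and $\mc{X}$. The dangerousness of $K$ is exactly the hypothesis guaranteeing that a witness $H_{K,s}$ exists for every possible value of $s$, which is what makes the embedding robust against whatever types happen to dominate in the $X_j$'s. A secondary technicality is that \cref{l:unrestricted} controls only cross-bigraphs between the chosen $A_j$'s, not bigraphs $G[W,A_j]$ with the external set $W$; we sidestep this by pre-filtering each base $\mc{A}_j$ so that every $A \in \mc{A}_j$ already exhibits the required $W$-pattern, after which \cref{l:unrestricted} handles the remaining cross-bigraphs. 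Making the reduction $W \subseteq X_i$ and the uniqueness of $\pi$ quantitatively rigorous requires a routine but careful Kolmogorov-complexity counting argument based on \cref{o:smooth} and the $\Delta_k$ hypothesis.
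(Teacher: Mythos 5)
Your overall architecture (perfect approximation from \cref{t:approx}, vertex assignment via \cref{l:vertexcritical}, Ramsey-type homogeneous bases, dangerousness of $K$ supplying a forbidden witness $H_{K,s}$ for every $s$, and \cref{l:unrestricted} to assemble the copy) matches the paper, but the core step has a genuine gap: you cannot obtain, by pigeonhole, a large base in $X_j$ whose members all exhibit the \emph{prescribed} bipartite pattern to $W$. Pigeonhole over the constantly many $(|W|,k)$-patterns only yields some \emph{popular} pattern, and the graph $G$ is under no obligation to make the pattern required by $H_{K,s}[S,A_j]$ popular (for instance, every vertex of $X_j$ could be joined to all of $W$, in which case only the complete pattern is available, while the witness may demand non-edges). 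Dangerousness of $K$ gives you, for each $s$, \emph{one} graph in $\mc{P}(\iota(K),\mc{H}(s,l-1-s))\setminus\mc{F}$ with its own fixed cross-adjacencies between $S$ and the other parts; it does not give a witness for every conceivable cross-pattern, so you cannot adapt the witness to whatever pattern happens to dominate. This is exactly the point where the $\Delta_k(\mc{F},n)$ hypothesis must be used for the whole set $W$ (not just for single vertices, as in your uniqueness argument for $\pi$): via \cref{o:smooth} one gets $C_G(W)\geq \left(\frac{l-1}{l}-\delta\right)|W|n-O(\log n)$, the membership $W\subseteq Y_i$ caps $C_G(W,X_i-W)$ by $|W|\delta n$, and \cref{c:perfect2} lets one split $C_G(W)$ over the parts, so that $C_G(W,X_j-W)$ is within $O(\delta |W| n)$ of its maximum $|W||X_j|$ for every $j\neq i$ (this is the bound \eqref{e:highentropy} in the paper). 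Then \cref{c:homogenous2} shows that if the \emph{specific} pattern $H_{K,s}[S,A_j]$ were realized on too few elements of a homogeneous $(k,m)$-base, the bigraph $G[W,X_j]$ would be compressible by $\Omega(n)$, contradicting that lower bound; hence every pattern, in particular the one you need, is abundant, and only then does \cref{l:unrestricted} finish the embedding. Without this entropy step your pre-filtered bases need not exist and the contradiction cannot be reached.

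Two smaller issues: the reduction ``we may reduce to the case $W\subseteq X_i$'' is neither justified (you cannot move the vertices of a concrete induced copy of $K$) nor needed --- the paper simply works with $Z=W\subseteq Y_i$ and the truncated parts $X_j-Z$; and your $Y_i$ is defined through the single-vertex quantities $C_G(v,X_{\pi(v)}-\{v\})$, whereas to bound $C_G(W,X_i-W)$ by $|W|\delta n$ one wants the robust condition (as in the paper) that $C_G(v,X_i-Z)\leq\delta n$ for every set $Z\ni v$ of size at most $k$, so that the per-vertex bounds can be summed over $v\in W$ after deleting $W$ itself.
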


\begin{proof} Let an integer $k$ be chosen  so that 
	for all $0 \leq s \leq l-1$ and all $K \in \mc{K}$ there exists a graph $H(K,s)\not \in \mc{F}$ such that $|V(H)| \leq k$,  $H \setminus S \in \mc{H}(s,l-1-s)$ for some $S \subseteq V(H)$ such that $H[S]$ is isomorphic to $K$. Thus, in particular,  $|V(K)| \leq k$ for every $K \in \mc{K}$.  Let $\delta=\delta(l,k)>0$ be chosen  implicitly, sufficiently small to satisfy the inequalities appearing throughout the proof.

	By \cref{t:approx} there exists $\eps>0$ such that almost every $G \in \mc{F}$ with $|V(G)|=n$ admits a $(l,k,n^{1 - \eps})$-perfect approximation. Thus we assume that $G$ admits such an approximation $\mc{X}=(X_1,X_2,\ldots,X_l)$.
	
	 Let $Y_i$ denote the set of all vertices $v \in V(G)$ such that we have $C_G(v, X_i - Z) \leq \delta n$ for every $Z \subseteq V(G)$ with $v \in Z$, $|Z| \leq k$. By Lemma~\ref{l:vertexcritical} we have $\cup_{i=1}^l Y_i = V(G)$, as long as $n$ is sufficiently large. By Corollary~\ref{c:perfect2} we have $C(\mc{X}|_U \:| \: G[U]) = O(n^{1-\eps}\log n)$ for every $U \subseteq  V(G)$. Thus by Observation~\ref{o:partition2}, we have 
	 \begin{equation}\label{e:sup1}C_G(Z) \leq \sum_{i=1}^l C_G(Z, X_i - Z) + O(n^{1-\eps}\log n). \end{equation} 
	 for every $Z \subseteq V(G)$.
	 On the other hand, by Observation~\ref{o:smooth} we may assume that  \begin{equation}\label{e:sup2}C_G(Z) \geq \Delta_k(\mc{F},n)|Z| - O(\log n) \geq \left( \frac{l-1}{l} -\delta \right)|Z|n - O(\log n),\end{equation}  if $|Z| \leq k$, and \begin{equation}\label{e:sup3}  C_G(Z,X_i-Z) \leq |Z|\delta n+O(|Z|\log n) \end{equation} for every $Z \subseteq Y_i$. 
	 	
	 	 It follows that for every $Z \subseteq Y_i$ such that $|Z| \leq k$ and every $1 \leq j \leq l$, $j \neq i$ we have \begin{align} C_G&(Z,X_j-Z) \geq \notag\\ &\stackrel{(\ref{e:sup1})}{\geq} C_G(Z) -  C_G(Z, X_i - Z)  - \sum_{j' \in [l] -\{i,j\} }C_G(Z, X_{j'} - Z)- O(n^{1-\eps}\log n) \notag \\  &\stackrel{(\ref{e:sup2}),(\ref{e:sup3})}{\geq}  \left( \frac{l-1}{l} -\delta \right)|Z|n  - |Z|\delta n  - \sum_{j' \in [l] -\{i,j\} }C_G(Z, X_{j'} - Z)- O(n^{1-\eps}\log n)
	 	 \notag	\\
	 	 	 &\geq  \left( \frac{l-1}{l} -2\delta \right)|Z|n  - |Z|\sum_{j' \in [l] -\{i,j\} }|X_{j'}| - O(n^{1-\eps}\log n) \notag\\
	 	 	  &\geq  \frac{|Z|n}{l}- 2\delta|Z| n - O(n^{1-\eps}\log n). \label{e:highentropy}\end{align} In particular, the above inequality
 applied to a one element set $Z$ implies that $Y_i \cap Y_j = \emptyset$ for all $\{i,j\} \subseteq l$, and so  $(Y_1,\ldots,Y_l)$ is a partition of $V(G)$.  
 
 It remains to show that $G[Y_i]$ is $K$-free for all $K \in \mc{K}$.  Suppose for a contradiction. and without loss of generality,  that there exists $Z \subseteq Y_l$, $|Z| \leq k$ such that $G[Z]$ is isomorphic to $K$ for some $K \in \mc{K}$. Let $X'_i=X_i - Z$ for $1 \leq i \leq l-1$.  
 
 Let $m = \lceil lk 2^{k^2+3}\delta n\rceil$, and
 let $\mc{A}_i$ be a $(k,m)$-base in $G[X'_i]$ such that $C(\mc{A}_i | G[X'_i]) = O(1)$. Let $J$ be a $(|Z|,k)$-ordered bigraph, and let $\mc{A}'_i$ be the set of all $A \in \mc{A}_i$ such that $G[Z,A]$ is isomorphic to $J$. Then $|\mc{A}'_i| \geq \frac{m}{(k^2+2)^22^{k^2+2}}$, as otherwise, Corollary~\ref{c:homogenous2} implies $$C_G(Z,X'_i) \leq |Z||X'_i| - \frac{m}{2^{k^2+2}} + O(\log n),$$ contradicting (\ref{e:highentropy}) by the choice of $m$. 
 
 By Lemma~\ref{l:Ramsey} for every $1 \leq i \leq l-1$, as long as $n$ is large enough and $\delta$ is small enough, there exists a $(k,m)$-base $\mc{A}_i$  in $G[X'_i]$ consisting wither of stable sets, or  of cliques. In particular, some base with these properties is computable as a function of $G[X'_i]$ and $m$ ans so we may assume  $C(\mc{A}_i | G[X'_i]) = O(\log m)$.
  
  Suppose without loss of generality that $\mc{A}_i$ consists of stable sets for $i \leq s$ and of cliques for $s < i \leq l$. The graph $H=H(K,s) \not \in \mc{F}$ then has the property that $V(H)$ admits a partition $(S,A_1,\ldots,A_{l-1})$ such that $H[S]$ is isomorphic to $K$, $A_i$ is a stable set in $H$ for $i \leq s$ and $A_i$ is a clique for $i > s$. We assume that $V(H)$ is ordered.  Let $m'= \left \lceil  \frac{m}{(k^2+2)^22^{k^2+2}} \right \rceil$. As shown in the previous paragraph  $X_i'$ contains an $(|A_i|,m')$-base $\mc{A}'_i$ such that $G[Z,A']$ is isomorphic to $H[S,A_i]$ for every $A' \in \mc{A}'_i$. As in the proof of Lemma~\ref{l:vertexcritical}, using \cref{l:unrestricted} we can construct a subgraph of $G$ isomorphic to $H$, obtaining the desired contradiction.\end{proof}

The following technical lemma is the last remaining ingredient needed to derive~\cref{t:critical} from \cref{l:supercritical}. 

\begin{lem}\label{l:smooth}
	Let $\mc{F},\mc{F}_*$ be hereditary families such that  $\mc{F}_* \subseteq \mc{F}$ and $\mc{F}_*$ is smooth. Let $l =\wpn(\mc{F})$. Suppose that there exist an integer $k$ and $\delta>0$ such that if $\Delta_k(\mc{F},n) \geq ((l-1)/l - \delta)n $ then almost all graphs in $\mc{F}^n$ lie in $\mc{F}_*^n$. Then almost all  graphs in $\mc{F}$ lie in $\mc{F}_*$.
\end{lem}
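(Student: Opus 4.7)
The plan is to argue by contradiction: set $\phi(n) := \h(\mc{F},n) - \h(\mc{F}_*,n) \geq 0$, note that the target conclusion is equivalent to $\phi(n) \to 0$, and suppose there exist $\eps_0 > 0$ and infinitely many $n$ with $\phi(n) \geq \eps_0$. The key step is to show that every ``bad'' $n$ (one with $\Delta_k(\mc{F},n) < ((l-1)/l - \delta)n$) forces $\phi$ to strictly grow when moving backwards by at most $k$. By the contrapositive of the hypothesis, any sufficiently large $n$ with $\phi(n) \geq \eps_0$ must be bad, yielding some $d = d(n) \in [1,k]$ with $\h(\mc{F},n) - \h(\mc{F},n-d) < d((l-1)/l - \delta)n$. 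On the other hand, summing the smoothness inequality for $\mc{F}_*$ at level $\delta' := \delta/2$ gives $\h(\mc{F}_*,n) - \h(\mc{F}_*,n-d) \geq d((l-1)/l - \delta')n - O(k^2)$. Subtracting these bounds yields $\phi(n-d) \geq \phi(n) + (\delta - \delta')n/2$ for $n$ large enough.

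Next, iterate the backward step. Starting from a large $n^* = n_j$ with $\phi(n^*) \geq \eps_0$, define $m_0 := n^*$ and $m_{i+1} := m_i - d(m_i)$; the strict growth of $\phi$ preserves $\phi(m_i) \geq \eps_0$, so each backward step is well-defined as long as $m_i$ exceeds the thresholds coming from the hypothesis and from smoothness. Running the iteration for $T := \lfloor n^*/(4k) \rfloor$ steps keeps $m_T \geq 3n^*/4$, and
\[
\phi(m_T) \;\geq\; \phi(n^*) + \sum_{i=0}^{T-1} \tfrac{\delta-\delta'}{2}\, m_i \;\geq\; c\,(n^*)^2/k
\]
for a fixed constant $c > 0$ depending only on $\delta$.

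For the contradiction, \cref{t:AlonSpeed} gives $\h(\mc{F},m) \leq ((l-1)/l)m^2/2 + o(m^2)$, and telescoping the smoothness of $\mc{F}_*$ with any parameter $\eta > 0$ gives $\h(\mc{F}_*,m) \geq ((l-1)/l)m^2/2 - \eta m^2/2 - O_\eta(1)$. Hence $\phi(m) \leq \eta m^2/2 + o(m^2)$ for every $\eta > 0$ and every large $m$. Choosing $\eta$ strictly less than $2c/k$ and applying the bound at $m_T \leq n^*$ for $n^*$ sufficiently large contradicts the lower bound $\phi(m_T) \geq c(n^*)^2/k$, completing the argument.

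The main obstacle is bookkeeping: one must verify that every $m_i$ in the iteration stays above the various $n_0$ thresholds coming from the hypothesis, from the smoothness of $\mc{F}_*$, and from the Alon--Balogh--Bollob\'{a}s--Morris theorem, and that the accumulated $O(k^2)$ errors from smoothness remain negligible against the per-step gain $\Omega(n)$. Once these are arranged, the collision between the quadratic lower bound from the iteration and the global $o(n^2)$ upper bound on $\phi$ closes the proof.
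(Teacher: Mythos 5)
Your proposal is correct, but it runs in the opposite direction from the paper's argument. You negate the conclusion, pass to the deficiency $\phi(n)=\h(\mc{F},n)-\h(\mc{F}_*,n)$, and show that at any large $n$ with $\phi(n)\ge\eps_0$ the contrapositive of the hypothesis produces some $d\le k$ along which $\h(\mc{F})$ grows slowly while smoothness makes $\h(\mc{F}_*)$ grow quickly, so $\phi$ gains $\Omega(n)$ per backward step; iterating $\Theta(n/k)$ steps gives $\phi=\Omega(n^2)$, which collides with the $o(n^2)$ bound on $\phi$ obtained from \cref{t:AlonSpeed} plus telescoped smoothness. The paper instead argues forwards: it sets $I=\{n:\Delta_k(\mc{F},n)\ge((l-1)/l-\delta)n\}$, uses \cref{l:lower} to show $I$ is infinite, fixes $n_0\in I$ large enough that $|\mc{F}_*^{n}|\ge|\mc{F}^{n}|/2$ for $n\in I$, $n\ge n_0$, and then shows by induction (using $\h(\mc{F},n)\le\h(\mc{F}_*,n)+1$ at good $n$ together with smoothness of $\mc{F}_*$) that $n\in I$ implies $n+1\in I$, so $I$ is cofinite and the hypothesis applies to every large $n$. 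The local computation, comparing increments of $\h(\mc{F})$ and $\h(\mc{F}_*)$ via $\Delta_k$ and smoothness, is essentially the same in both arguments, but the global bookkeeping differs: the paper needs only the lower bound of \cref{l:lower} to seed its induction and gets, as a by-product, that $\Delta_k(\mc{F},n)\ge((l-1)/l-\delta)n$ for all large $n$, whereas your route avoids having to locate a starting point in $I$ at the price of invoking the global speed upper bound and a two-scale contradiction. One shared implicit point (also unaddressed in the paper, so not counted as a gap): applying smoothness of $\mc{F}_*$ with the factor $(l-1)/l$ presumes $\wpn(\mc{F}_*)=l$; this does follow from the hypothesis, because $I$ is automatically infinite and along $I$ the speeds of $\mc{F}$ and $\mc{F}_*$ agree up to $o(1)$, but it merits an explicit sentence in a full write-up, as does the routine verification that all iterates $m_i$ stay above the various thresholds, which you correctly flag as bookkeeping.
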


\begin{proof}
	Let $f(n) = h(\mc{F},n)-((l-1)/l -\delta/2)n^2/2$.  Note that \begin{equation}\Delta_k(\mc{F},n) \geq \min_{1 \leq d \leq k}\frac{f(n) - f(n-d)}{d} +\left(\frac{l-1}{l}-\delta\right)n \label{e:deltak}\end{equation} for all sufficiently large $n$.  
	
	Let $I =\{ n \in \bb{N} | \Delta_k(\mc{F},n) \geq ((l-1)/l - \delta)n \}$. By Lemma~\ref{l:lower}, $\lim_{n \to \infty}f(n) = + \infty$. 	Thus (\ref{e:deltak}) implies that $I$ is infinite. 
	
	In particular, there exists $n_0 \in I$ such that $n_0 \geq 2k/\delta$,  $|\mc{F}^n_*| \geq |\mc{F}^n|/2$ for all $n \in I$ such that $n \geq n_0$, and $\h(\mc{F}_*,n+1) \geq  \h(\mc{F}_*,n) + ((l-1)/l -\delta/2)n$ for all $n \geq n_0$. These properties imply that for all  $0 \leq d \leq k$ and $n \in I$ such that $n \geq n_0$ we have
	\begin{align*} &\frac{\h(\mc{F},n+1) - \h(\mc{F},n+1-d)}{d} \\ &\geq  \frac{\h(\mc{F}_*,n+1) -   \h(\mc{F},n)}{d} +  \frac{\h(\mc{F},n) -  \h(\mc{F},n-(d-1))}{d} \\ &\geq  \frac{\h(\mc{F}_*,n+1) -   \h(\mc{F}_*,n) - 1}{d} + \frac{d-1}{d} \Delta_k(\mc{F},n) \\& \geq
	((l-1)/l - \delta)(n+1), 
	\end{align*}
	implying $n+1 \in I$. Thus every integer greater than or equal to $n_0$ belongs to $I$ and the lemma follows.
\end{proof}

\cref{t:critical} which we restate for convenience immediately follows from Lemmas~\cref{l:supercritical} and~\cref{l:smooth}. 

\critical*
\begin{proof} Let $\mc{F}_*=\mc{P}(\mc{T},l) \cap \mc{F}$. By \cref{l:supercritical} there exist $k$ and $\delta$ such that the conditions of
\cref{l:smooth} are satisfied. Therefore \cref{l:smooth} implies the theorem.
\end{proof}

The results of this section can also be conveniently used to give proofs Lemmas \ref{l:uniqueness1}  and  \ref{l:uniqueness2}, which have been postponed until now.
Both proofs are based on the following lemma. 

\begin{lem}\label{l:toUniqueness}
 Let $l \geq 2$ be an integer, let $\eps > 0$ be real, and let $\mc{T}$ be a meager hereditary family. Then there exists $\delta > 0$ and $n_0>0$ such that the following holds. Let $G$ be a graph on $n \geq n_0$ vertices such that $C(G) \geq \frac{l-1}{2l}n^2 - n^{2 - \eps}$  and  
 $C_G(v) \geq \left(\frac{l-1}{l} -\delta \right)n$ for every $v \in V(G)$. and $G$ admits a $(\mc{T},l)$-partition. Then the  $(\mc{T},l)$-partition is unique and $\eps$-balanced.  	
\end{lem}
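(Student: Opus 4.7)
The plan is to handle balance and uniqueness separately, using the perfect-approximation machinery developed earlier in this section.

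\textbf{Balance.} Since $\mc{T}$ is meager it is in particular thin, so \cref{l:Jfree} supplies a bigraph $J$ with $\mc{T}\subseteq\Forb(J)$, and a fortiori $\eps(J)>0$. Any $(\mc{T},l)$-partition $\mc{X}$ of $G$ is automatically a $(\Forb(J),l,0)$-approximation, so (after rescaling $\eps$ below $\eps(J)/3$ if necessary, which only strengthens the hypothesis) \cref{l:balanced} promotes $\mc{X}$ to an $(l,k,n^{1-\eps})$-perfect approximation of $G$. This yields $\eps$-balance of $\mc{X}$ together with $(k,n^{1-\eps})$-unrestrictedness of the collection.

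\textbf{Uniqueness -- reduction to a bad vertex.} Assume for contradiction that $\mc{X}$ and $\mc{Y}$ are two distinct $(\mc{T},l)$-partitions of $G$; both are $(l,k,n^{1-\eps})$-perfect by the previous step. For every pair $(i,j)\in[l]^2$, \cref{l:approxClose} applied to $X=Y_i$ (whose induced subgraph is $J$-free since $G[Y_i]\in\mc{T}$) and $Y=X_j$ (which is $(k,n^{1-\eps})$-unrestricted inside the perfect $\mc{X}$) gives $\min\{|Y_i-X_j|,|Y_i\cap X_j|\}\le kn^{1-\eps}$. Combined with $|Y_i|=n/l+O(n^{1-\eps})$, this forces exactly one index $\sigma(i)$ with $|Y_i-X_{\sigma(i)}|\le kn^{1-\eps}$, and a counting argument on the $|Y_i|$ shows $\sigma\colon[l]\to[l]$ is a bijection. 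After relabeling so that $\sigma=\mathrm{id}$, the inequality $\mc{X}\neq\mc{Y}$ produces a vertex $v\in Y_i\cap X_j$ with $i\neq j$.

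\textbf{Uniqueness -- short description of $v$.} The goal is now to contradict $C_G(v)\ge\bigl((l-1)/l-\delta\bigr)n$ by compressing $v$'s adjacency vector given $G\setminus v$. Given $G\setminus v$, the restricted partitions $\mc{X}|_{V-v}$ and $\mc{Y}|_{V-v}$ are perfect approximations of $G\setminus v$ and by \cref{c:perfect2} each cost only $O(n^{1-\eps}\log n)$ conditional bits to describe. Set $R:=V(G)\setminus(\{v\}\cup X_j\cup Y_i)$; using $|X_j\cap Y_i|=O(n^{1-\eps})$ we obtain $|R|=(l-2)n/l+o(n)$, and the adjacencies of $v$ to $R$ contribute $|R|$ bits trivially. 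Inside $X_j\setminus\{v\}$, use \cref{l:Ramsey} to extract a \emph{computable} base of $\Omega(n)$ disjoint homogeneous $k$-subsets of $G[X_j\setminus\{v\}]$, a constant fraction of which are of the same type (say independent sets), forming a $(k,\Omega(n))$-base $\mc{B}$. The meager hypothesis supplies a substar $T_s=K_{1,k}$ with $T_s\notin\mc{T}$, hence $v$ cannot be adjacent to every vertex of an independent $k$-set inside $X_j$ (else $G[X_j]$ would contain $T_s$). Thus the bigraph property $\mc{P}$ of $(1,k)$-bigraphs possibly realized between $v$ and members of $\mc{B}$ satisfies $|\mc{P}^{1,k}|\le 2^k-1$, and \cref{l:bases} (with the singleton $(1,1)$-base $\{\{v\}\}$) yields
\[C_G(\{v\},X_j\setminus\{v\})\le|X_j|-\Omega\bigl(n/2^k\bigr)+O(\log n).\]
The symmetric argument inside $Y_i\setminus\{v\}$ -- using whichever of the missing substar or antisubstar matches the type of the Ramsey-homogeneous sets found there -- gives the same saving. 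Summing across the three blocks,
\[
C_G(v)\le|R|+\bigl(|X_j|-\Omega(n/2^k)\bigr)+\bigl(|Y_i|-\Omega(n/2^k)\bigr)+o(n)\le\left(\frac{l-1}{l}-\Omega(2^{-k})\right)n+o(n),
\]
which contradicts the hypothesis once $\delta$ is chosen smaller than this fixed positive gap.

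\textbf{Main obstacle.} The delicate point is the \emph{constant-fraction} saving on $v$'s adjacencies to $X_j\setminus\{v\}$ and to $Y_i\setminus\{v\}$: mere thinness of $\mc{T}$ yields only a subpolynomial saving, which is too weak to beat the $((l-1)/l-\delta)n$ threshold. The needed constant-fraction gain comes from pairing \cref{l:Ramsey} (to produce linearly many disjoint homogeneous $k$-sets) with the apex-free half of the meager hypothesis (which forbids the ``all-adjacent'' or ``all-non-adjacent'' bigraph pattern), so that \cref{l:bases} can be applied with $h(1,k,\mc{P})\le k-2^{-k}$. Careful bookkeeping of the $O(n^{1-\eps})$-sized overlap $X_j\cap Y_i$ is required so that the two savings from the $X_j$- and $Y_i$-sides remain genuinely additive.
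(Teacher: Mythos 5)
Your reduction to a ``bad vertex'' $v\in Y_i\cap X_j$ with $i\neq j$ is sound and close in spirit to the paper's argument (the paper finds such a vertex via a direct claim rather than the $\sigma$-matching, and likewise describes the partitions cheaply via \cref{c:perfect2}), but the compression step that is supposed to contradict $C_G(v)\ge\left(\frac{l-1}{l}-\delta\right)n$ does not work quantitatively. With $|X_j\cap Y_i|=O(n^{1-\eps})$ you have $|R|+|X_j|+|Y_i|\ge n-1$, so what your argument actually yields is $C_G(v)\le\left(1-\Omega(2^{-k})\right)n+o(n)$; your final display, which replaces this by $\left(\frac{l-1}{l}-\Omega(2^{-k})\right)n+o(n)$, is a mis-summation. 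To contradict the hypothesis you must save roughly an entire block's worth of bits, i.e.\ at least $n/l-\delta n$, and a saving of a $2^{-k}$ fraction on each of the two meager blocks is far too small: $k$ is dictated by $\mc{T}$ (the size of the missing substar/antisubstar and of the bigraph $J$), so $2^{-k}$ can be much smaller than $1/l$. This is exactly the ``main obstacle'' you flag, but the fix you propose (\cref{l:bases} with $h(1,k,\mc{P})\le k-2^{-k}$) does not overcome it.

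What the paper does instead is make $v$'s adjacencies into each meager block cost only $o(n)$ bits, via \cref{l:homogenous}: partition most of the block into disjoint homogeneous sets of size $r$ chosen \emph{large} relative to $k$ and $1/\eps$, observe that meagerness forbids $v$ from having both $k$ neighbors and $k$ non-neighbors in such a set (this is also where your step needs repair: a missing substar need not be a full star $K_{1,k}$, so ``adjacent to all of an independent $k$-set'' is not in general forbidden -- one must forbid the pattern with many neighbors \emph{and} many non-neighbors), and hence the number of admissible patterns on an $r$-set is at most about $2k\binom{r}{k}\le 2^{\eps r/4}$, giving $C_G(\{v\},X-\{v\})\le\eps'' n$ for any fixed $\eps''$. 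With that near-total compression, $C_G(v)\lesssim |V(G)-X_j-Y_i|+o(n)$, which is at most $\left(\frac{l-1}{l}-\frac{1}{4l^2}\right)n+o(n)$ in the paper's setting (and about $\frac{l-2}{l}n$ in your configuration), and the contradiction follows. As written, your proof has a genuine gap at this point.
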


\begin{proof}
	Let $k$ be such that $\mc{T} \subseteq \Forb(J)$ for some $(k,k)$-bigraph $J$. By \cref{l:balanced} the lower bound on $C(G)$ implies that there exists $\eps'>0$ such that  every  $(\mc{T},l)$-partition $\mc{X}$ of $G$ is $(l,k,n^{1-\eps'})$-perfect. By \cref{c:perfect2} this in turn implies that $C( \mc{X} | G \setminus v) = O(n^{1-\eps'}\log n)=o(n)$ for every $v \in V(G)$.\footnote{The constant hidden in $o(\cdot)$ notation might depend on $l$ and $\mc{T}$.}
	
	Suppose for a contradiction that $\mc{X}_1$ and  $\mc{X}_2$ are two distinct  $(\mc{T},l)$-partitions of $G$.  We claim that there exists $v \in V(G)$, $X_1 \in \mc{X}_1$ and $X_2 \in \mc{X}_2$ such that $v \in X_1 \cap X_2$ and $|X_1 \cup X_2| \geq \frac{n}{l} + \frac{n}{4l^2}$. Suppose not. Then $|X| \leq  \frac{n}{l} + \frac{n}{4l^2}$ for every $X \in \mc{X}_1 \cup \mc{X}_2$, and thus $$|X| \geq   n -  (l-1)\left(\frac{n}{l} + \frac{n}{4l^2} \right) \geq \frac{3}{4l}n$$ for every such $X$. Consider now $X_1 \in \mc{X}_1$ such that $X_1$ is not a subset of any part of $\mc{X}_2$. Then there exists   $X_2 \in \mc{X}_2$ such that $X_1 \cap X_2 \neq \emptyset$ and $|X_1 - X_2| \leq |X_1|/2$. Thus $$|X_1 \cup X_2| = |X_2| + |X_1 - X_2| \geq \frac{3}{4l}n  + \frac{3}{8l}n >  \frac{n}{l} + \frac{n}{4l^2},$$ a contradiction finishing the proof of the claim.
	
	Let $v,X_1,X_2$ be as in the  above claim. Note that $C_G[v,X_i - \{v\}]=o(n)$ 
by  \cref{l:homogenous}  for $i=1,2$ as $G[X_i] \in \mc{T}$ and $\mc{T}$ is meager. By considering the partition $(X_1 \cup X_2 \setminus \{v\}, V(G)-X_1-X_2)$ of $V(G)-\{v\}$ and using the bounds established above, we obtain \begin{align*}
	C_G(v) &\leq C_G[\{v\},V(G) - X_1 -X_2] \\ &+ \sum_{i=1}^{2}\left(C_G[\{v\},X_i- \{v\}] + C(X_i |  G \setminus v) \right) \\ &\leq |V(G)-X_1-X_2| + o(n) \\ &\leq \frac{l-1}{l}n  -\frac{n}{4l^2} + o(n).
	\end{align*}
	This is a contradiction, implying the lemma, as long as $\delta < \min\{1/4l^2, \eps'\}$ and $n$ is sufficiently large.
\end{proof}

We are now ready to finish the proofs of Lemmas \ref{l:uniqueness1}  and  \ref{l:uniqueness2} restated below.

\uniqone*

 \begin{proof} 
 		As observed in the proof of \cref{t:approx} we have $$ C(G) \geq \frac{(l-1)}{2l}n^2- O(n) $$ for almost all graphs $G \in \mc{F}$ on $n$ vertices and  thus there exist $\eps >0$ such that every 
 		$(\mc{T},l)$-partition of such a graph $G$ is $\eps$-balanced.

 		Suppose now that $\mc{F}$ is smooth. Then by \cref{o:smooth} for every $\delta > 0$ for almost all graphs $G \in \mc{F}$ on $n$ vertices, we have  $$C_G(v) \geq \left(\frac{l-1}{l} -\delta \right)n$$ for every $v \in V(G)$.  By \cref{l:toUniqueness} there exists $\delta > 0$ such that all sufficiently large graphs satisfying both of the above conditions satisfy the conclusion of \cref{l:uniqueness1}, as desired.
 \end{proof} 
 
\uniqtwo* 
 
  \begin{proof}
  	
  	Let $\mc{G}=(G_1,G_2,\ldots,G_l)$. As there exist $2^{\frac{l-1}{2l}n^2 - O(n^{2-\eps})}$ possible extensions of $\mc{G}$, we have $
	  C(G) \geq \frac{l-1}{2l}n^2 - O(n^{2-\eps})
  	$ for almost all such extensions. Similarly, for every $v \in [n]$ fixing the restriction $G'$ to $[n]-\{v\}$ of any extension of $\mc{G}$, there are $2^{\frac{l-1}{l}n - o(n)}$  ways to select the edges incident to $v$ and extend $G'$ to a complete extension of $v$. Thus for any $\delta >0$ the proportion of such extensions satisfying $C_G(v) \leq  \left(\frac{l-1}{l} - \delta\right)n$ is at most $2^{-\delta n + o(n)}$. By the union bound almost all extensions of  $\mc{G}$ satisfy $C_G(v) \geq  \left(\frac{l-1}{l} - \delta\right)n$ if $n$ is sufficiently large.
  	The desired conclusion now follows from \cref{l:toUniqueness}.
  \end{proof}
  
\section{Applications  of \cref{t:critical}}\label{s:applications}

The bulk of  this section is devoted to deriving \cref{t:critical2} from  \cref{t:critical}.  We also prove \cref{l:constellationspeed}. 

First, we will verify that the technical conditions in the statement \cref{t:critical} are satisfied for our intended application.

\begin{lem}\label{l:minnonstar} Let $s \geq 0$ be an integer, and let $G$ be a graph such that $G$ is not an $s$-star, but $G \setminus v$ is an $s$-star for every $v \in V(G)$. Then $|V(G)| \leq 4s+5$.
\end{lem}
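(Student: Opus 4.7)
The plan is to argue by contradiction. Suppose $|V(G)| = n \geq 4s + 6$. For each $v \in V(G)$ choose a crown $X_v$ and core $S_v$ of $G \setminus v$ with $|X_v| \geq n - 1 - s$ and $|S_v| \leq s$. Since any two crowns intersect in at least $n - 2 - 2s \geq 4$ vertices, and a set that is simultaneously an independent set and a clique has at most one vertex, all $X_v$ have the same type. By replacing $G$ with its complement if necessary, I will assume each $X_v$ is an independent set.

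The key structural step is to establish that for every $v \in V(G)$, the vertex $v$ is not homogeneous to $X_v$, meaning $v$ has both a neighbor and a non-neighbor in $X_v$. Suppose for contradiction that $v$ is adjacent to all of $X_v$ (the ``adjacent to none'' case is symmetric by complementation). Then $X_v$ is a crown of $G$ with core $\{v\} \cup S_v$; since $G$ is not an $s$-star we must have $|S_v| = s$. Partition $S_v = S_+ \cup S_-$ according to whether vertices are adjacent to all of $X_v$ or to none. Pick any $v_1 \in X_v$: every vertex in $\{v\} \cup S_+$ has at least $|X_v| \geq n - 1 - s$ neighbors, so cannot lie in the large independent set $X_{v_1}$, forcing $\{v\} \cup S_+ \subseteq S_{v_1}$. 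I then claim that $X^{**} = X_v \cup (X_{v_1} \cap S_-)$ is a crown of $G$ (independence uses that $S_-$ is non-adjacent to $X_v$ and that $X_{v_1}$ is independent; homogeneity of every outside vertex is verified by matching adjacencies along the non-empty intersection $X_v \cap X_{v_1}$). Its core has size $s + 1 - |X_{v_1} \cap S_-|$, which must exceed $s$ since $G$ is not an $s$-star, forcing $X_{v_1} \cap S_- = \emptyset$. Combined with the earlier inclusion this gives $S_v \cup \{v\} \subseteq S_{v_1}$, contradicting $|S_{v_1}| \leq s$.

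With this structural claim in hand, I fix any $v_0$ and set $N = N_G(v_0) \cap X_{v_0}$, $\bar{N} = X_{v_0} \setminus N$; both are nonempty. To bound one side, WLOG assume $|N| \leq |\bar{N}|$ and pick $v_1 \in \bar{N}$. A case split on the location of $v_0$ shows: if $v_0 \in X_{v_1}$ then independence forces $N \subseteq X_{v_0} \setminus X_{v_1}$, a set of size at most $s$, so $|N| \leq s$; if $v_0 \in S_{v_1}$, homogeneity of $v_0$ to $X_{v_1}$ either again gives $|N| \leq s$ or forces $X_{v_0} \cap X_{v_1} \subseteq N$, leading to $n - 1 - 2s \leq |N| \leq (n - 1 - s)/2$ and hence $n \leq 3s + 1$, contradicting $n \geq 4s + 6$. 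Thus $|N| \leq s$, and I set $X' = \bar{N}$, of size at least $n - 1 - 2s$. A direct check confirms $X'$ is a crown of $G$: vertices in $N$ are non-adjacent to $X' \subseteq X_{v_0}$ by independence; $v_0$ is non-adjacent to $X'$ by choice; and $S_{v_0}$-vertices are homogeneous to $X_{v_0}$.

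To finish, pick $v \in X'$, which is nonempty since $|X'| \geq 2s + 5$. Because $X_{v_0}$ is independent and $v \notin N$, the only possible neighbors of $v$ in $G$ lie in the set $S_+ \subseteq S_{v_0}$ of vertices adjacent to all of $X_{v_0}$. Now $X_v$ is an independent set with $|X_v \cap X'| \geq n - 1 - 3s \geq 1$, and any vertex of $X_v \cap S_+$ would be adjacent to this nonempty intersection, contradicting independence of $X_v$; hence $X_v \cap S_+ = \emptyset$ and $X_v \subseteq (X' \setminus \{v\}) \cup \{v_0\} \cup N \cup (S_{v_0} \setminus S_+)$. But $v$ is non-adjacent to every vertex in this union, so $v$ is homogeneous to $X_v$, contradicting the structural claim of the second paragraph. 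The main technical hurdle throughout is making sure that all the intersection lower bounds and degree estimates remain compatible with the single threshold $n \geq 4s + 6$, which is why the bound $4s + 5$ appears in the statement rather than something tighter.
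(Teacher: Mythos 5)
Your overall strategy is sound and genuinely different from the paper's proof (the paper fixes a single vertex $v$, chooses a second vertex $u$ in the crown of $G\setminus v$ by a majority rule on the neighbourhood of $v$, and reaches a contradiction by comparing just those two crowns; you instead prove the intermediate claim that every vertex is mixed on its crown via the augmented crown $X^{**}$, and then hunt for a vertex violating that claim). The construction of $X^{**}$, the degree argument giving $\{v\}\cup S_+\subseteq S_{v_1}$, the verification that $X'=\bar N$ is a crown, and the terminal contradiction are all correct, and the off-by-one slips ($|X_u\cap X_w|\ge n-2-2s$ rather than $n-1-2s$, $|X_v\cap X'|\ge n-2-3s$, and $|N|\le(n-1)/2$ giving $n\le 4s+3$ rather than $n\le 3s+1$) are harmless at the threshold $n\ge 4s+6$. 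The problems are the two symmetry claims. First, ``the adjacent-to-none case is symmetric by complementation'' is not valid as stated: you have already normalized all crowns $X_u$ to be independent, and complementation turns them into cliques, so your case-A argument (which excludes high-degree vertices from the large \emph{independent} set $X_{v_1}$) does not apply to the complemented configuration. This one is easily repaired without complementation: if $v$ is anticomplete to $X_v$, run the identical argument with $v$ attached to $S_-$ instead of $S_+$, i.e.\ show $S_+\subseteq S_{v_1}$ by degrees and that $X^{**}=X_v\cup\bigl(X_{v_1}\cap(S_-\cup\{v\})\bigr)$ is a crown of $G$, forcing $X_{v_1}\cap(S_-\cup\{v\})=\emptyset$ and hence $\{v\}\cup S_v\subseteq S_{v_1}$, the same contradiction. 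So here the justification is wrong but the claim and the fix are routine.

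The second issue is a genuine gap: ``WLOG $|N|\le|\bar N|$'' is not a without-loss assumption, because once the crowns are fixed to be independent there is no symmetry exchanging neighbours and non-neighbours of $v_0$ in $X_{v_0}$ (complementation is unavailable, and flipping only the edges at $v_0$ need not preserve the hypothesis that $G$ is not an $s$-star). As written, the case $|N|>|\bar N|$ is simply not covered, and it cannot be waved away: your final step explicitly uses that the chosen $v\in X'$ lies in $\bar N$, so that $v_0$ is a non-neighbour of $v$; with the roles exchanged, $v_0$ is a neighbour of $v$ and the sentence ``$v$ is non-adjacent to every vertex in this union'' fails verbatim. The case can be patched: pick $v_1\in N$; if $v_0\in X_{v_1}$ or $v_0$ is anticomplete to $X_{v_1}$, then $N\cap X_{v_1}=\emptyset$ gives $|N|\le s+1$, hence $|X_{v_0}|\le 2s+2<n-1-s$, an outright contradiction; if $v_0$ is complete to $X_{v_1}$, then $\bar N\subseteq S_{v_1}$, so $|\bar N|\le s$, and one takes $X'=N$, notes that $N_G(v)\subseteq S_+\cup\{v_0\}$ for $v\in X'$ and that $v_0\notin X_v$ (since $v_0$ is complete to the nonempty set $X_v\cap N\subseteq X_v$, contradicting independence of $X_v$ otherwise), after which the terminal contradiction goes through. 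Alternatively you can avoid the WLOG altogether by always picking $v_1\in\bar N$ (nonempty by your structural claim) and, in the subcase where $v_0$ is complete to $X_{v_1}$, switching to $X'=N$ as above. Until one of these repairs is written out, the proof is incomplete.
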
	

\begin{proof}
	Suppose for a contradiction that $|V(G)| \geq 4s+6$. Let $v \in V(G)$ be chosen arbitrarily. As $G \setminus v$ is an $s$-star, there exists a core $X$ of $G \setminus v$ such that $|X| \leq s$. Let $S = V(G) - X  - \{v\}$. Then $S$ is homogeneous, and $|S| \geq |V(G)| - s- 1 \geq 2$. We assume without loss of generality that $S$ is independent. Choose $u \in S$ such that, if $v$ is adjacent to at least $|S|/2$ vertices in $S$ then $uv \in E(G)$, and otherwise $uv \not \in E(G)$. Let  $X'$ be a core of $G \setminus u$ such that $|X'| \leq s$.
	
 Let $S' = V(G) - X'  - \{u\}$, then $|S \cap S'| \geq |V(G)|-2s-2 \geq 2$. Moreover,  every vertex in $V(G)-\{v\}$ is either adjacent to every vertex in $S$ or to no vertex in $S$, while every vertex in $V(G)-\{u\}$ is either adjacent to every vertex in $S'$ or to no vertex in $S'$, It follows that either $S \cup S'$ is independent in $G$, or $uv$ is the unique edge of $G[S \cup S']$. The second possibility, however, is eliminated by the choice of $u$.
	
	 As $G$ is not an $s$-star,  $S' \cup \{u\}$ is not a crown of $G$, that is there exists $v' \in V(G)$ and $w,w' \in S' \cup \{u\}$  such that $v'w \in E(G), v'w' \not \in E(G)$.  By the result of the previous paragraph we have $v' \neq u$.  Choose arbitrary $w'' \in S \cap S' - \{v'\}$. As $v'w \in E(G)$, we have $v'w'' \in E(G)$ unless $v'=v$ and $w=u$. Symmetrically, as $v'w' \not \in E(G)$, we have $v'w''  \not \in E(G)$ unless $v'=v$ and $w=u$. thu $v'=v$ and $u \in\{w,w'\}$.
	 
	 Suppose first that $u=w$. Then $v$ is adjacent to at least $|S|/2$ vertices in $S$, and so to at least one vertex of $S \cap S'$, as $|S \cap S'| > |V(G)|/2 \geq |S|/2$. It follows that $v$ is adjacent to every vertex in $S'$, contradicting existence of $w'$. The case $u=w'$ is analogous.
\end{proof}

\begin{cor}\label{c:generatestars} Every star-like hereditary family is finitely generated.
\end{cor}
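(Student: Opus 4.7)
The plan is to show that every $s$-star-like hereditary family $\mc{F}$ has only finitely many minimal non-members (graphs $H \notin \mc{F}$ with $H\setminus v \in \mc{F}$ for all $v$); since hereditary families are in bijection with antichains of minimal non-members, this is equivalent to finite generation. Suppose $\mc{F}$ is $s$-star-like with threshold $n_0$. Enlarging $n_0$ if necessary, I may assume $n_0 > 4s+5$ and that $\mc{F}^n = \bigcup_{i=1}^{r} \mc{P}^n(\mc{J}_i)$ for every $n \geq n_0$, where $\mc{J}_1,\ldots,\mc{J}_r$ are the irreducible $s$-star systems provided by \cref{t:BBW}.

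Let $H$ be a minimal non-member with $|V(H)| > n_0$. Every $H \setminus v$ lies in $\mc{F}$ and has at least $n_0$ vertices, hence is an $s$-star. If $H$ itself were not an $s$-star, then \cref{l:minnonstar} would force $|V(H)| \leq 4s+5 < n_0$, a contradiction; so $H$ is an $s$-star. Fix a minimum core $S$ of $H$ with crown $C = V(H)-S$, and pick $v \in C$ (possible whenever $|V(H)| > s$). A direct check shows $S$ is still a minimum core of $H\setminus v$, with the same star system $\mc{J}_H = (H[S], \alpha_H, \beta_H)$, and the minimality of $S$ forces $\mc{J}_H$ to be irreducible (otherwise some $u \in S$ could be moved to the crown).

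The crux is the following structural lemma about large $s$-stars, which I would prove from first principles: if $G$ is an $s$-star with minimum core $S$, irreducible star system $\mc{J}_G$, and $|V(G)-S| \geq 2s+2$, then every core $Z \subseteq V(G)$ with $|Z| \leq s$ satisfies $S \subseteq Z$. Indeed, $V(G)-Z$ and $V(G)-S$ are both large homogeneous sets, so their common intersection forces them to be homogeneous of the same type $\beta_G$; then for any hypothetical $u \in S-Z$ one deduces $\alpha_G(u)=\beta_G$ and, for every $x \in S-\{u\}$, $xu \in E(G) \iff \alpha_G(x)=1$, which contradicts the irreducibility condition of $\mc{J}_G$ at $u$.

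With this in hand, since $H\setminus v \in \mc{F}$, by \cref{t:BBW} there is some $i$ with $H\setminus v \in \mc{P}(\mc{J}_i)$; unwinding the definition, $H\setminus v$ itself admits a template for some subsystem $\mc{J}'$ of $\mc{J}_i$ with template core $Z$, and the structural lemma gives $Z \supseteq S$. I extend this template to $H$ by placing $v$ in the crown: because $v \in C$, its adjacencies in $H$ are $\alpha_H$ to $Z \cap S$, $\beta_H$ to $Z-S \subseteq C$, and $\beta_H$ to $V(H\setminus v)-Z \subseteq C-\{v\}$; these match the template's data, since the homogeneity of $V(G)-S$ forces $\beta'=\beta_H$ and, for $u \in Z-S \subseteq C$, the template forces $\alpha'(u)=\beta_H$. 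Thus $H \in \mc{P}(\mc{J}_i) \subseteq \mc{F}$, contradicting $H \notin \mc{F}$. Hence minimal non-members have bounded size, so there are only finitely many of them, and $\mc{F}$ is finitely generated. The main technical obstacle is the structural lemma $S \subseteq Z$ together with the accompanying compatibility check for template extension; everything else is bookkeeping.
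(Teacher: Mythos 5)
Your proof is correct, but it finishes differently from the paper's. Both arguments start identically: reduce to showing that the set $\mc{H}$ of minimal non-members is finite, and use \cref{l:minnonstar} to conclude that any $H\in\mc{H}$ with more than $\max\{n_0,4s+5\}$ vertices is itself an $s$-star. At that point the paper simply invokes \cref{o:starsystems} to place each such $H$ in $\mc{P}(\mc{J})$ for one of the finitely many $s$-star systems $\mc{J}$, and then appeals to the (stated as ``easy to see'') fact that each family $\mc{P}(\mc{J})$ contains only finitely many members of $\mc{H}$ --- in essence an antichain argument inside a star-system family. You instead rule out large minimal non-members outright: using the exact description $\mc{F}^n=\cup_{i}\mc{P}^n(\mc{J}_i)$ from the ``moreover'' clause of \cref{t:BBW}, your core-containment lemma ($S\subseteq Z$ for every core $Z$ of size at most $s$, proved via irreducibility of the system attached to a minimal core), and the template-extension check for a crown vertex $v$, you conclude $H\in\mc{P}(\mc{J}_i)$ and hence $H\in\mc{F}$, a contradiction. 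This buys an explicit bound on the order of minimal non-members and makes the finiteness step fully explicit, at the price of invoking the stronger part of \cref{t:BBW} and of proving the auxiliary structural lemma from scratch; the paper's route needs only the definition of star-like and \cref{o:starsystems}, but leaves its key finiteness claim to the reader. Three small wrinkles in your write-up, none of them gaps: the containment $\mc{P}(\mc{J}_i)\subseteq\mc{F}$ is guaranteed by \cref{t:BBW} only at $n\geq n_0$ vertices, which suffices here since $|V(H)|>n_0$; after extending the template for the subsystem $\mc{J}'$ you need one more line (pad $H$ with the missing core vertices of $\mc{J}_i$) to see that $H$ indeed lies in $\mc{P}(\mc{J}_i)$, as $\mc{P}(\mc{J}_i)$ consists of induced subgraphs of graphs admitting a full $\mc{J}_i$-template; and your contradiction with irreducibility uses the intended reading of that definition (the displayed condition in the paper repeats $uv\notin E(J)$ in both branches, an apparent typo), which is the reading consistent with the minimal-core characterization and is what your lemma genuinely requires.
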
	

\begin{proof}
Let $\mc{H}$ be the collection of all graphs $H \not \in \mc{F}$ such that $H \setminus v \in \mc{F}$ for every $v \in V(H)$.	Then $\mc{F} = \Forb(\mc{H})$, and it suffices to show that $\mc{H}$ consists of finitely many isomorphism classes of graphs.

Let integers $s$ and  $n_0 \geq 4s+5$ be such that every graph in $\mc{F}^n$ is an $s$-star for every $n \geq n_0$.
By \cref{l:minnonstar} every $H \in \mc{H}$ with $|V(H)| \geq n_0$ is an $s$-star. By \cref{o:starsystems} this implies  there exists an  $s$-star system $\mc{J}$ such that $H \in \mc{P}(\mc{J})$. It is easy to see that for every star system $\mc{J}$ the family $\mc{P}(\mc{J})$ contains finitely many isomorphism classes of graphs in $\mc{H}$, and thus the corollary holds.
\end{proof}

The proof of the next useful lemma is analogous to the proof of \cref{l:cleanext} and we omit it.

\begin{lem}\label{l:starext} Let $\mc{J}$ be a constellation. Then $\mc{P}(\mc{J})$ is smooth.
\end{lem}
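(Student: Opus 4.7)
The plan is to reduce the statement to \cref{l:cleanext}. Write $\mc{J}=(J,\phi,\alpha,\beta)$ and $\mc{F}=\mc{P}(\mc{J})$. Each $\mc{P}(\mc{J}_i)$ is $s$-star-like and hence thin by \cref{l:starspeed} and \cref{l:Jfree}; it is also extendable, since given a sufficiently large $H\in\mc{P}(\mc{J}_i)$ together with its $\mc{J}_i$-template one obtains an extension by adding a new crown vertex with adjacencies forced by $\alpha$ and $\beta(i)$. Since $\mc{P}(\mc{J}_i)$ contains every independent set when $\beta(i)=0$ and every clique when $\beta(i)=1$ (take the set in question to be the crown of a template with empty core), we have $\mc{H}(s_0,l-s_0)\subseteq\mc{F}$ with $s_0=|\beta^{-1}(0)|$, and hence $\chi_c(\mc{F})=l$. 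We assume $l\ge 2$, since otherwise smoothness is undefined.

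The substantive step is the identity $\mc{F}=\mc{P}(\mc{P}(\mc{J}_1),\ldots,\mc{P}(\mc{J}_l))$. The inclusion $\subseteq$ is immediate from the definitions. For the reverse inclusion, suppose $G$ admits a $(\mc{P}(\mc{J}_1),\ldots,\mc{P}(\mc{J}_l))$-partition $(X_1,\ldots,X_l)$; for each $i$ pick $H_i\supseteq G[X_i]$ admitting a $\mc{J}_i$-template $(\psi_i,\ldots)$, and form $G^*$ on $V(G)\cup\bigcup_i(V(H_i)-X_i)$ by placing $H_i$ on $V(H_i)$, inheriting the edges of $G$ between distinct $X_i,X_j$, and attaching the new vertices to the rest arbitrarily. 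Then $(\psi,V(H_1),\ldots,V(H_l))$ with $\psi=\bigcup_i\psi_i$ is a $\mc{J}$-template of $G^*$: the $\mc{J}$-template conditions decouple across parts, and on each $V(H_i)$ they reduce to the $\mc{J}_i$-template conditions already satisfied by $H_i$. Hence $G$ is an induced subgraph of $G^*\in\mc{F}$, so $G\in\mc{F}$.

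Applying \cref{l:cleanext} to the extendable thin sequence $(\mc{P}(\mc{J}_1),\ldots,\mc{P}(\mc{J}_l))$ now yields smoothness of $\mc{F}$. The main---and essentially only---obstacle is the identity of the second paragraph; the rest is routine. An equivalent direct approach, which is what the paper alludes to by calling the proof ``analogous'' to that of \cref{l:cleanext}, would verify by a counting argument that almost every $G\in\mc{F}^n$ admits a $\mc{J}$-template, then pick a smallest part $X_i$ of that template (of size $\le n/l$) and extend $G$ by adding a new crown vertex to $X_i$ to obtain $|\ext(G,\mc{F})|\ge 2^{(l-1)n/l}$, and hence $h(\mc{F},n+1)\ge h(\mc{F},n)+(l-1)n/l-O(1)$.
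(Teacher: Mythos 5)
Your reduction stands or falls with the identity $\mc{P}(\mc{J})=\mc{P}(\mc{P}(\mc{J}_1),\ldots,\mc{P}(\mc{J}_l))$, and that identity is false: the $\mc{J}$-template conditions do \emph{not} decouple across parts. The map $\psi$ must be an embedding of the whole graph $J$, so adjacencies \emph{and} non-adjacencies between core vertices that $\phi$ places in different parts are prescribed by $J$; in your $G^*$ these cross-part core adjacencies are inherited from $G$ whenever both $\psi_i(u)$ and $\psi_j(v)$ happen to lie in $V(G)$, which in general cannot be avoided (if $\mc{J}_i$ has a core vertex with $\alpha=1$ and $\beta(i)=0$ and $G[X_i]$ is a star, the image of that core vertex must be the star's centre). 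Concretely, take $l=2$, $J$ a single edge $uv$, $\phi(u)=1$, $\phi(v)=2$, $\alpha\equiv1$, $\beta\equiv0$, and let $G$ have parts $X_1=\{a,x_1,x_2,x_3\}$ and $X_2=\{b,y_1,y_2,y_3\}$, each inducing a star with centres $a,b$, with cross edges exactly $x_1y_1,\ ay_1,\ x_2y_2,\ bx_2$. Then $G\in\mc{P}(\mc{P}(\mc{J}_1),\mc{P}(\mc{J}_2))$. On the other hand, in any graph admitting a $\mc{J}$-template the two core vertices are adjacent, each dominates its own part, and the rest of each part is independent; restricting a template of a supergraph to $V(G)$, membership $G\in\mc{P}(\mc{J})$ would require independent sets $A_1,A_2$ covering all but at most two vertices of $G$, where one leftover vertex must dominate one $A_i$, and two leftover vertices must be adjacent and dominate $A_1$ and $A_2$ respectively. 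The vertex-disjoint triangles $\{a,x_1,y_1\}$ and $\{b,x_2,y_2\}$ rule out zero or one leftover vertices; the only adjacent pairs meeting both triangles are $\{a,x_2\}$ and $\{b,y_1\}$, and these fail domination ($y_3$ is adjacent to neither $a$ nor $x_2$, and $x_3$ to neither $b$ nor $y_1$). So $G\notin\mc{P}(\mc{J})$. Since smoothness does not pass to subfamilies, the true inclusion $\mc{P}(\mc{J})\subseteq\mc{P}(\mc{P}(\mc{J}_1),\ldots,\mc{P}(\mc{J}_l))$ does not help; note that the paper's proof of \cref{l:constellationspeed} likewise claims only this inclusion together with $|\mc{P}^n(\mc{J})|\ge\eps|\mc{P}^n(\mf{F})|$, not equality.

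The direct argument you sketch in your last sentence is the intended route and does work, but it needs one more ingredient than you give, since a graph in $\mc{P}(\mc{J})$ need not itself admit a $\mc{J}$-template (taking induced subgraphs may delete core vertices). The clean fix is to observe that $\mc{P}(\mc{J})$ is exactly the union, over $W\subseteq V(J)$, of the families of graphs admitting a template of the constellation obtained from $\mc{J}$ by deleting $W$: deleting crown vertices preserves a template, deleting core vertices is absorbed into $W$, and conversely the vertices of $W$ can be re-attached as new vertices with the required adjacencies. Then for every $G\in\mc{P}(\mc{J})^n$ fix such a template, add a new vertex to a smallest part (of size at most $n/l$) with its adjacencies inside that part forced by $\alpha$ and $\beta$ and arbitrary adjacencies to the remaining at least $(l-1)n/l$ vertices; every choice yields $G'\in\mc{P}(\mc{J})^{n+1}$ with $G'[[n]]=G$, so $\h(\mc{P}(\mc{J}),n+1)\ge\h(\mc{P}(\mc{J}),n)+(l-1)n/l$, and smoothness follows exactly as at the end of the proof of \cref{l:cleanext}.
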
	

We are now ready to prove a lemma which accomplishes most of the remaining technical work in the proof of \cref{t:critical2}.

\begin{lem}\label{l:starpartition} Let $l \geq 2$, $s \geq 0$ be integers, let $\mc{T}$ be an $s$-star-like hereditary family, and let  $\mc{F} \subseteq \mc{P}(\mc{T},l)$ be a hereditary family with $\chi_c(\mc{F})=l$.  Then  almost all  $G \in \mc{F}$ there exist an irreducible $(l,s)$-constellation $\mc{J}$ such that $G  \in \mc{P}(\mc{J})  \subseteq \mc{F}$.
\end{lem}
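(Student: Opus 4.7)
My plan is to classify graphs in $\mc{F}$ according to the combinatorial type of their $(\mc{T},l)$-partition, which will naturally give rise to an irreducible $(l,s)$-constellation. The argument then reduces to a finite case analysis over the possible constellations, with the ``bad'' cases contributing negligibly to $\mc{F}$.

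\emph{Step 1: Extracting the constellation.} By assumption every $G \in \mc{F}$ admits a $(\mc{T},l)$-partition $(X_1,\ldots,X_l)$. Combining the lower bound $\h(\mc{F},n) \geq (l-1)n^2/(2l) - O(1)$ from \cref{l:lower} with $\h(\mc{T},n) = O(\log n)$ (from \cref{t:BBW} and \cref{l:starspeed}), a routine entropy comparison between balanced and imbalanced partition shapes shows that almost all $G \in \mc{F}^n$ admit only partitions where every part has size at least the $s$-star-likeness threshold $n_0$ of $\mc{T}$. Consequently every $G[X_i]$ is an $s$-star, and by \cref{o:starsystems} lies in $\mc{P}(\mc{J}_i)$ for some irreducible $s$-star system $\mc{J}_i$ arising from a minimum core. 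Assembling these with the graph induced on the union of cores yields an irreducible $(l,s)$-constellation $\mc{J}_G$ with $G \in \mc{P}(\mc{J}_G)$.

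\emph{Step 2: Counting.} There are only finitely many irreducible $(l,s)$-constellations up to isomorphism; call $\mc{J}$ \emph{good} if $\mc{P}(\mc{J}) \subseteq \mc{F}$ and \emph{bad} otherwise. For $G$ with $\mc{J}_G$ good the conclusion is immediate, so by a union bound it suffices to show $|\mc{F}^n \cap \mc{P}(\mc{J})^n| = o(|\mc{F}^n|)$ for each bad $\mc{J}$. Fix such $\mc{J}$ and pick $H \in \mc{P}(\mc{J}) \setminus \mc{F}$, so that $\mc{F} \cap \mc{P}(\mc{J}) \subseteq \mc{P}(\mc{J}) \cap \Forb(H)$. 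Easy cases aside ($V(H)=V(J)$ forces $J \not\in \mc{F}$ and hence $\mc{F} \cap \mc{P}(\mc{J})^n = \emptyset$ for large $n$; a single nonempty homogeneous cell of $H$'s template forces $\mc{F} \cap \mc{P}(\mc{J})$ to be finite), we may assume at least two cells $Y_i \setminus W_i, Y_j \setminus W_j$ of an $\mc{J}$-template of $H$ are nonempty. A graph $G \in \mc{P}(\mc{J})^n$ is specified by a $\mc{J}$-template $(\psi, X_1, \ldots, X_l)$ (requiring $O(n)$ bits) plus the free bigraphs between homogeneous cells $X_i \setminus \psi(\phi^{-1}(i))$. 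The canonical partial embedding sending $H$'s core $W$ to $\psi(V(J))$ via the identity on $V(J)$ extends to a full embedding of $H$ into $G$ when some choice of subsets $S_i \subseteq X_i \setminus \psi(\phi^{-1}(i))$ of size $|Y_i \setminus W_i|$ satisfies the right bigraph and free-adjacency constraints. For $G \in \Forb(H)$ this must fail, which forces some bigraph $G[X_i \setminus \psi(\phi^{-1}(i)), X_j \setminus \psi(\phi^{-1}(j))]$ to avoid the ordered bigraph $H[Y_i \setminus W_i, Y_j \setminus W_j]$ as an induced sub-bigraph. By the bigraph version of Alekseev's theorem (encoded in \cref{l:Jfree}), the number of such constrained bigraphs on parts of size $\sim n/l$ is at most $2^{(n/l)^2(1-c)}$ for some $c = c(H) > 0$, yielding $|\mc{P}(\mc{J})^n \cap \Forb(H)^n| \leq 2^{(l-1)n^2/(2l) - \Omega(n^2)} = 2^{-\Omega(n^2)} |\mc{F}^n|$, as desired.

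\emph{Main obstacle.} The key delicate point in Step 2 is the implication ``$G \in \Forb(H)$ forces some bigraph between cells to avoid the pattern.'' A priori the canonical embedding could fail instead because the free template adjacencies between $\psi(v)$ and the candidate subsets $S_i$ are mismatched, and moreover one must consider non-canonical embeddings of $H$ into $G$ in which $H$'s template vertices are mapped into $G$'s homogeneous cells rather than onto $\psi(V(J))$. I would handle both via a pigeonhole-style argument: if every bigraph pair contains abundant copies of $H[Y_i \setminus W_i, Y_j \setminus W_j]$, the resulting matchings are numerous enough that some choice also satisfies the free-adjacency constraints — \cref{l:unrestricted} supplies the needed ``rich structure'' output — and \cref{l:uniqueness2} is used to argue that the $\mc{J}$-template of $G$ is essentially unique, so it suffices to analyze the canonical embedding.
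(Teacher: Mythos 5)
Your Step~1 matches the paper: both arguments start from the $\eps$-balanced $(\mc{T},l)$-partition provided by \cref{l:uniqueness1}, deduce that each part is an $s$-star (since its size exceeds $\mc{T}$'s star-likeness threshold), extract the irreducible $s$-star systems via \cref{o:starsystems}, and assemble the irreducible $(l,s)$-constellation together with the core-embedding $\psi$. Your remark that the uniqueness of the partition (via \cref{l:uniqueness2}) is needed to avoid double-counting is also the content of property (P2) in the paper's proof.

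Step~2 is where the argument has a genuine gap. You aim to show $|\mc{F}^n \cap \mc{P}(\mc{J})^n| = 2^{-\Omega(n^2)}|\mc{F}^n|$ for each bad constellation $\mc{J}$, by arguing that $H$-freeness forces a bigraph between two crown cells to avoid a fixed pattern and then invoking a bigraph Alekseev-type bound. This bound does not hold in all cases. Consider a witness $H\in\mc{P}(\mc{J})\setminus\mc{F}$ whose $\mc{J}$-template $(\psi',Y_1,\ldots,Y_l)$ has exactly one nonempty crown cell $Y_{i_0}\setminus W_{i_0}$. You classify this as an ``easy case'' and claim it forces $\mc{F}\cap\mc{P}(\mc{J})$ to be finite, but that is false. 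In this situation the only free pairs in a candidate embedding of $H$ join the $O(1)$ template vertices $\psi(V(J))\setminus X_{i_0}$ (lying in parts other than $i_0$) to the crown $X_{i_0}\setminus\psi(\phi^{-1}(i_0))$, i.e.\ only $O(n)$ edges carry any constraint. All $\Theta(n^2)$ remaining cross-part edges of $G$ are unconstrained, so $|\mc{P}(\mc{J})^n\cap\Forb(H)^n|$ can be as large as $2^{h(\mc{X})-O(n)}$, which is neither finite nor off by $2^{-\Omega(n^2)}$. More generally, whenever the obstruction to embedding $H$ is concentrated on the ``core-to-crown'' free edges rather than on a crown-to-crown bigraph, the savings is only $\Omega(n)$ and the bigraph-Alekseev step does not apply. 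Your pigeonhole sketch in the ``main obstacle'' paragraph does not resolve this: it only defers the difficulty.

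The paper handles this uniformly and more simply. Instead of trying to identify a single bigraph obstruction, the paper picks $m=\Theta(n)$ pairwise disjoint ``windows'' in $\cup_i(X_i - Z)$, each of size matching the crown of $H$, and for each window forms a candidate map $\eta_j$. The set $E_j$ of free pairs in the image of $\eta_j$ (pairs not internal to $Z$ or to a single part) has size at most $\binom{|V(H)|}{2}$, the sets $E_j$ are pairwise disjoint, and $H$-freeness forbids exactly one of the $2^{|E_j|}$ configurations on each $E_j$. Independence across the disjoint windows then gives $|\mc{F}(\mc{J},\psi)|\le 2^{h(\mc{X})}\bigl(1-2^{-|V(H)|^2}\bigr)^{\Theta(n)}$, i.e.\ an $\Omega(n)$ saving — which after accounting for the polynomially many choices of $(\mc{J},\psi)$ is exactly enough to establish (P1). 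This window argument sidesteps the case analysis entirely and, crucially, does not require any $\Omega(n^2)$-scale bound.
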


\begin{proof} Let $\mc{F}_* \subseteq \mc{F}$ denote the union of families $ \mc{P}(\mc{J})$ taken over all irreducible $(l,s)$-constellation $\mc{J}$ such that  $\mc{P}(\mc{J}) \subseteq \mc{F}$. Thus we need to show that almost every graph in $\mc{F}$ lies in $\mc{F}_*$.
	
By \cref{l:uniqueness1} there exists $\eps > 0$ such that almost every $G \in \mc{F}$ admits an $\eps$-balanced $(\mc{T},l)$-partition. Fix an $\eps$-balanced partition $\mc{X}=(X_1,X_2,\ldots,X_l)$ of $[n]$, and, as in Section~\ref{s:structureproof}, let $\h(\mc{X})$ be the number of pairs of elements of $[n]$ that do not belong to the same part  of $\mc{X}$. Let $\mc{G}=\mc{G}(\mc{X})$ denote the set of all graphs $G$ with $V(G)=[n]$ such that $\mc{X}$ is a  $(\mc{T},l)$-partition of $G$. As $\chi_c(\mc{F})=l$, we have $|\mc{G} \cap \mc{F}| \geq 2^{h(\mc{X})}$. 

We claim that\begin{itemize}
	\item[(P1)] $|\mc{G} \cap (\mc{F} -\mc{F}_*)| = o(2^{h(\mc{X})})$, and
	\item[(P2)] $\mc{X}$ is the unique  $(\mc{T},l)$-partition for almost every graph in $\mc{G} \cap \mc{F}_*$.
\end{itemize}
Clearly these two claims imply the lemma.

For the proof of (P1), let $\mc{F}(\mc{J},\psi)$ denote the set of graphs $G \in \mc{G} \cap \mc{F}$ such that $(\psi,\mc{X})$ is a  $\mc{J}$-template for $G$ for a fixed irreducible $(l,s)$-constellation $\mc{J}=(J,\phi,\alpha,\beta)$ and fixed $\psi: V(J) \to [n]$. Note that every graph in $\mc{G} \cap \mc{F}$ lies in $\mc{F}(\mc{J},\psi)$ for some choice of $\mc{J}$ and $\psi$ as above.

We upper bound $\mc{F}(\mc{J},\psi)$ for every $\mc{J}$ such that $\mc{P}(\mc{J}) \not \subseteq \mc{F}$. Let $H$ be such that $H \not \in \mc{F}$ and $H$ admits a $\mc{J}$-template $(\psi',Y_1,\ldots,Y_l)$. Let $Z$ denote the image of $\psi$, and let $Z'$ the image of $\psi'$.

Let $m = \lceil \frac{n}{2l|V(H)|}\rceil$.
For each $i \in [l]$, select pairwise disjoint $W^i_1, \ldots, W^i_m \subseteq X_i - Z$ such that $|W^i_j|=|Y_i-Z'|$ for every $j \in [m]$. Clearly such a choice is possible for large enough $n$.

Given $j \in [m]$ define $\eta_j: V(H) \to [n]$ to be a map such that  $\psi=\eta_j\psi'$ and $\eta_j(Y_i - Z')=W^i_j$ for every $i \in [l]$. Let $E_j$ be the set of all pairs of elements in the image of $\eta_j$ which do not both belong to $Z$ or to the same part of $\mc{X}$. Then $E_1,\ldots,E_m$ are pairwise disjoint by construction.

Moreover, for every $G \in \mc{F}(\mc{J},\psi)$ and every  $j \in [m]$, as  $\eta_j$ is not an embedding of $H$ into $G$, out of $2^{|E_j|}$ possible choices of subsets of pairs in $E_j$ at least one can not occur as $E(G) \cap E_j$. As the edges of $G$ on the parts of $\mc{X}$ are determined by the $\mc{J}$-pattern it follows that
$$|\mc{F}(\mc{J},\psi)| \leq 2^{h(\mc{X})} \prod_{j=1}^m\left(\frac{2^{|E_j|-1}}{2^{|E_j|}} \right) \leq  2^{h(\mc{X})} \left(1 - \frac{1}{2^{|V(H)|^2}} \right)^{\frac{n}{2l|V(H)|}}.$$
As the number of possible pairs $(\mc{J},\psi)$ is polynomial in $n$ this bound implies (P1).

For (P2) we again restrict our attention to a set $\mc{F}(\mc{J},\psi)$, but this time for an $(l,s)$-constellation $\mc{J}$ such that $\mc{P}(\mc{J})  \subseteq \mc{F}$. There exist graphs $G_1,G_2,\ldots,G_l \in \mc{T}$ such that $V(G_i)=X_i$ for every $i \in [l]$ and every $G \in \mc{F}(\mc{J},\psi)$ is an extension of $(G_1,G_2,\ldots,G_l)$. Moreover, $(\psi,\mc{X})$ is an $\mc{J}$-pattern for a constant fraction of all extensions of  $(G_1,G_2,\ldots,G_l)$ and all such extensions belong to  $\mc{F}(\mc{J},\psi)$. These observations together with \cref{l:uniqueness2} imply the second claim.
 \end{proof}	

We are now ready to derive the final conclusion of \cref{t:critical2} from \cref{t:critical}.

\begin{lem}\label{l:crit2} Let 
	$\mc{F}$ be an $(l,s)$-critical hereditary family then for almost all  $G \in \mc{F}$ there exist an $(l,s)$-constellation $\mc{J}$ such that $G  \in \mc{P}(\mc{J})  \subseteq \mc{F}$.
\end{lem}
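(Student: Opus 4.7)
The plan is to reduce this statement to the two already-available ingredients: \cref{t:critical}, which will supply a $(\mc{T},l)$-partition for almost every graph in $\mc{F}$, and \cref{l:starpartition}, which upgrades such a partition into a constellation template. First I would dispose of the base case $l=1$ immediately from the moreover part of \cref{t:BBW}: an $(1,s)$-critical family is $s$-star-like, so for $n$ large $\mc{F}^n=\cup_{i=1}^{r}\mc{P}^n(\mc{J}_i)$ for irreducible $s$-star systems $\mc{J}_i$, and these are in bijection with irreducible $(1,s)$-constellations.

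For $l\geq 2$ I would set $\mc{T}=\red(\mc{F})$. By $(l,s)$-criticality, $\mc{T}$ is $s$-star-like; by \cref{c:generatestars} it is finitely generated, say $\mc{T}=\Forb(\mc{K})$, and since the generators are minimal graphs outside $\red(\mc{F})$ we automatically have $\mc{K}\subseteq \crit(\mc{F})$. To invoke \cref{t:critical} with this $\mc{K}$ I would verify its two remaining hypotheses. For smoothness of $\mc{P}(\mc{T},l)\cap\mc{F}$: the moreover part of \cref{t:BBW} provides the explicit template structure of large graphs in $\mc{T}$, which immediately yields extendability of $\mc{T}$; \cref{l:cleanext} then gives smoothness of $\mc{P}(\mc{T},\mc{T},\ldots,\mc{T})$, and since $\mc{P}(\mc{T},l)\cap\mc{F}$ contains $\mc{H}(s^{\ast},l-s^{\ast})$ for some $s^{\ast}$ witnessing $\chi_c(\mc{F})=l$, the intersection shares the same asymptotic growth, so smoothness transfers to it.

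With \cref{t:critical} in hand, almost every $G\in\mc{F}$ admits a $(\mc{T},l)$-partition, i.e.\ lies in $\mc{F}':=\mc{P}(\mc{T},l)\cap\mc{F}$. The family $\mc{F}'$ is a hereditary family with $\chi_c(\mc{F}')=l$ contained in $\mc{P}(\mc{T},l)$ with $\mc{T}$ $s$-star-like, exactly the hypothesis of \cref{l:starpartition}. That lemma produces, for almost every $G\in\mc{F}'$, an irreducible $(l,s)$-constellation $\mc{J}$ with $G\in\mc{P}(\mc{J})\subseteq\mc{F}'\subseteq\mc{F}$, and chaining the two ``almost every'' statements yields the conclusion of the lemma.

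The main obstacle is the apex-freeness hypothesis of \cref{t:critical}, which is not automatic from $(l,s)$-criticality (the paper's own remark after \cref{t:critical} already flags this). My plan to handle it is to split cases: if $\mc{F}$ is apex-free, proceed as above; otherwise, \cref{o:apexfree} produces an $s'$ with $\mc{H}(s',l-s')^{+}\subseteq\mc{F}$, and the graphs built from such an ``apex'' vertex are precisely captured by an explicit reducible $(l,1)$-constellation $\mc{J}_0$ with $\mc{P}(\mc{J}_0)\subseteq\mc{F}$. Peeling off the finitely many such $\mc{J}_0$'s leaves a hereditary sub-family $\hat{\mc{F}}\subseteq\mc{F}$ on which apex-freeness can be verified and \cref{t:critical} applied; graphs that fall outside $\hat{\mc{F}}$ are already witnessed by the corresponding $\mc{J}_0$. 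Carrying out this reduction carefully--in particular checking that the peeling is compatible with the smoothness estimates used inside \cref{t:critical}--is the delicate technical point of the argument.
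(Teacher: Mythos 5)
The main problem is your final paragraph. Apex-freeness is not an obstacle here: it is automatic from $(l,s)$-criticality, and this observation is exactly how the paper's proof starts. Indeed, $\red(\mc{F})$ is $s$-star-like, and a substar with more than $s$ leaves and more than $s$ isolated vertices (respectively its complement) has no core of size at most $s$, so arbitrarily large such substars and antisubstars lie outside $\red(\mc{F})$; hence $\red(\mc{F})$ is meager, which by the footnote to the definition of meager is precisely the statement that $\mc{F}$ is apex-free. (The remark after \cref{t:critical} concerns relaxing that hypothesis in general, not a failure of it for critical families.) Your fallback branch, besides being unnecessary, would not work as described: if $\mc{H}(s',l-s')^{+}\subseteq\mc{F}$ with $s'\geq 1$, then for every substar $H$ we have $\mc{P}(\iota(H),\mc{H}(s'-1,l-s'))\subseteq\mc{H}(s',l-s')^{+}\subseteq\mc{F}$, so every substar is $\mc{F}$-reduced (similarly every antisubstar when $s'\leq l-1$), and then $\red(\mc{F})$ is not star-like, contradicting criticality; so the branch is vacuous, but you never show this. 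Moreover $\mc{H}(s',l-s')^{+}$ is not of the form $\mc{P}(\mc{J}_0)$ for an $(l,1)$-constellation (a core vertex must be complete or anticomplete to the crown of its own part, whereas your apex has unconstrained adjacencies), and $\mc{F}$ with a constellation family removed is not hereditary, so the ``peeling'' has no clear meaning. Since you defer this branch as ``the delicate technical point'' rather than resolve it, the proposal as written does not close.

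The second soft spot is the smoothness verification. ``The intersection shares the same asymptotic growth, so smoothness transfers'' is not a valid inference: smoothness is a condition on the increments $\h(\cdot,n)-\h(\cdot,n-1)$ for every $\delta>0$, and it does not pass to a subfamily merely because the subfamily is squeezed between two families of the same leading-order speed. Your route can be repaired, but only with a quantitative sandwich: with $\mc{T}$ extendable (your argument via the moreover part of \cref{t:BBW} is fine) and meager, \cref{l:decomposespeed} together with \cref{l:starspeed} gives $\h(\mc{P}(\mc{T},l),n)\leq\h(\mc{H}(l),n)+O(\log n)$ and $\h(\mc{H}(s^{*},l-s^{*}),n)\geq\h(\mc{H}(l),n)-o(1)$, so $\h(\mc{P}(\mc{T},l)\cap\mc{F},n)$ differs from the smooth function $\h(\mc{H}(l),n)$ by $O(\log n)$, which does yield smoothness; this needs to be argued, not asserted. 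The paper proceeds differently: it applies \cref{l:starpartition} to $\mc{P}(\mc{T},l)\cap\mc{F}$ first, so that almost all of its members lie in the union of the constellation families $\mc{P}(\mc{J})\subseteq\mc{F}$, which is smooth by \cref{l:starext}, and smoothness of $\mc{P}(\mc{T},l)\cap\mc{F}$ follows; apart from these two points your apex-free branch follows the paper's route (\cref{c:generatestars}, then \cref{t:critical}, then \cref{l:starpartition}).
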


\begin{proof} Define $\mc{F}_*$ as in the proof of \cref{l:starpartition}. Thus we need to show that almost every graph in $\mc{F}$ lies in $\mc{F}_*$.
	
Let $\mc{T}= \red(\mc{F})$. Then $\mc{T}$ is $s$-star-like, and in particular, meager, implying that $\mc{F}$ is apex-free. By \cref{c:generatestars} there exists a finite set of graphs $\mc{K} \subseteq \dang(\mc{F})$ such that $\mc{T} = \Forb(\mc{K})$. 

The family $\mc{F}_*$ is smooth by \cref{l:starext}. Meanwhile,  almost every graph in $\mc{P}(\mc{T},l) \cap \mc{F}$ lies in 	$\mc{F}_*$ by \cref{l:starpartition}. It follows that $\mc{P}(\mc{T},l) \cap \mc{F}$ is smooth. Therefore $\mc{T}$ satisfies all the requirements of \cref{t:critical}. Therefore by  \cref{t:critical}  almost every graph in $\mc{F}$ lies in $\mc{P}(\mc{T},l)$, and thus in $\mc{F}_*$.  
\end{proof}	

The following bound on the speed of hereditary families partitionable into meager components is the final ingredient of the proof of \cref{l:constellationspeed} and  \cref{t:critical2}.

\begin{lem}\label{l:decomposespeed}
	Let $\mc{T}_1, \mc{T}_2, \ldots, \mc{T}_l$ be extendable meager hereditary families,
	and let $\mc{F} = \mc{P}(\mc{T}_1, \mc{T}_2, \ldots, \mc{T}_l)$. Then there exists $\eps > 0$ such that 
\begin{equation}\label{e:decompose} \sum_{i=1}^l  \h(\mc{T}_i, n/l - n^{1-\eps}) - o(1) \leq  \h(\mc{F},n) -  \h(\mc{H}(l),n)  \leq  \sum_{i=1}^l  \h(\mc{T}_i, n/l + n^{1-\eps}) + o(1).
\end{equation}
\end{lem}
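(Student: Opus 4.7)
The approach is to express both $|\mc{F}^n|$ and $|\mc{H}(l)^n|$ as parallel weighted sums over nearly balanced ordered compositions of $n$, and then to bracket their ratio via the product $\prod_i |\mc{T}_i^{n_i}|$. Choose $\eps>0$ small enough that \cref{c:uniqueness} applies both to $\mf{T}=(\mc{T}_1,\dots,\mc{T}_l)$ and to the all-$\mc{S}$ sequence, and set $\mc{N}_\eps = \{(n_i) : \sum n_i=n,\ |n_i-n/l|\le n^{1-\eps}\}$. Define
\[ f(n) := \sum_{(n_i)\in\mc{N}_\eps} \binom{n}{n_1,\ldots,n_l}\, 2^{\sum_{i<j} n_i n_j} \prod_{i=1}^l |\mc{T}_i^{n_i}|, \qquad g(n) := \sum_{(n_i)\in\mc{N}_\eps} \binom{n}{n_1,\ldots,n_l}\, 2^{\sum_{i<j} n_i n_j}. \]
By a direct count, $f(n)$ equals the number of pairs $(G,(X_1,\dots,X_l))$ with $G$ a graph on $[n]$ and $(X_1,\dots,X_l)$ an $\eps$-balanced ordered $\mf{T}$-partition of $G$, and analogously for $g(n)$ with $\mf{T}$ replaced by the all-$\mc{S}$ sequence. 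The uniqueness conclusion of \cref{c:uniqueness} then gives $f(n)=c_1|\mc{F}^n|(1+o(1))$ and $g(n)=c_2|\mc{H}(l)^n|(1+o(1))$, where $c_1,c_2\in\{1,\dots,l!\}$ are the orders of the respective label-preserving symmetry groups of the two sequences, so that $\log f(n)-\log g(n)=\h(\mc{F},n)-\h(\mc{H}(l),n)+O(1)$.

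Next I would bracket $f(n)$ against $g(n)$. Extendability of each $\mc{T}_i$ implies that every graph in $\mc{T}_i^m$ has at least one extension in $\mc{T}_i^{m+1}$ (for $m$ large), and extensions of distinct graphs are disjoint (each $G'\in\mc{T}_i^{m+1}$ is an extension only of its own restriction $G'[[m]]$), so the sequence $m\mapsto|\mc{T}_i^m|$ is non-decreasing once $m$ is large enough. Therefore, for every $(n_i)\in\mc{N}_\eps$,
\[ \prod_{i=1}^l |\mc{T}_i^{n/l-n^{1-\eps}}| \;\le\; \prod_{i=1}^l |\mc{T}_i^{n_i}| \;\le\; \prod_{i=1}^l |\mc{T}_i^{n/l+n^{1-\eps}}|, \]
and factoring these uniform bounds out of the sum yields $\prod_i |\mc{T}_i^{n/l-n^{1-\eps}}|\cdot g(n) \le f(n) \le \prod_i |\mc{T}_i^{n/l+n^{1-\eps}}|\cdot g(n)$. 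Taking logarithms and absorbing the $O(1)$ symmetry constant into the error then produces~(\ref{e:decompose}).

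The main technical obstacle is controlling the tails: one must verify that the full multinomial sums (unrestricted to $\mc{N}_\eps$) agree with $|\mc{F}^n|$ and $|\mc{H}(l)^n|$ up to $(1+o(1))$, i.e.\ that unbalanced compositions contribute negligibly even after multiplication by the product of subspeeds. For $(n_i)\notin\mc{N}_\eps$ the edge exponent $\sum_{i<j} n_in_j = (n^2-\sum n_i^2)/2$ drops by $\Omega(n^{2-2\eps})$ below its balanced maximum, while meagerness of each $\mc{T}_i$ combined with \cref{l:Jfree} gives $|\mc{T}_i^m|\le 2^{O(m^{2-\eps_0})}$ for some $\eps_0>0$; choosing $\eps<\eps_0/2$ makes the exponential suppression of the multinomial weight dominate the growth of $\prod_i |\mc{T}_i^{n_i}|$, so the unbalanced tail of the full sum is negligible relative to the balanced sum. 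Combining this tail bound with the (superpolynomially strong) uniqueness in \cref{c:uniqueness} closes the identification of $f(n), g(n)$ with the graph counts and completes the proof.
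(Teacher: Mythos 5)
Your argument is essentially the paper's own proof in different packaging: both reduce, via \cref{c:uniqueness} and \cref{l:uniqueness2}, to counting pairs (graph, $\eps$-balanced ordered partition), observe that this count factors as the cross-edge count times $\prod_i|\mc{T}_i^{|X_i|}|$, bracket that product between $\prod_i|\mc{T}_i^{n/l-n^{1-\eps}}|$ and $\prod_i|\mc{T}_i^{n/l+n^{1-\eps}}|$ using the monotonicity of $\h(\mc{T}_i,\cdot)$ supplied by extendability, and then compare with the analogous pair count for $\mc{H}(l)$. The only caution concerns your final step of ``absorbing the $O(1)$ symmetry constant into the error'': since the error in (\ref{e:decompose}) is stated as $o(1)$, a bounded ordering/symmetry factor is not literally absorbable, but the paper's proof silently incurs exactly the same bounded factor when it sums over balanced partitions, so your handling of it is, if anything, the more explicit of the two.
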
	 
\begin{proof} Let $\mf{T}=(\mc{T}_1, \mc{T}_2, \ldots, \mc{T}_l)$. By \cref{c:uniqueness} there exists $\eps >0$ such that almost every graph in $\mc{F}$ admits a unique $\eps$-balanced $\mf{T}$-partition and  almost every graph in $\mc{H}(l)$ admits a unique $\eps$-balanced $(\mc{S},l)$-partition. (Recall, that $\mc{S}$ denote the family of edgeless graphs.) 
	 
Fix an $\eps$-balanced partition $\mc{X}=(X_1,\ldots,X_l)$ of $[n]$. Let $\mc{F}(\mc{X})$ denote the set of all graphs in $G \in \mc{F}^n$ such that  $\mc{X}$ is the unique  $\eps$-balanced $\mf{T}$-partition of $G$. Similarly, let $\mc{H}(\mc{X})$ denote the  set of all graphs in $G' \in \mc{H}^n(l)$  such that $\mc{X}$ is the unique  $\eps$-balanced $(\mc{S},l)$-partition of $G'$. 
By \cref{l:uniqueness2} we have  \begin{equation}\label{e:decompose1}|\mc{F}(\mc{X})| = (1 + o(1)) 2^{h(\mc{X})}\prod_{i=1}^l |\mc{T}_i^{|X_i|}|.\end{equation} 

Let $m(n) = \sum_{i=1}^l  h(\mc{T}_i, n/l - n^{1-\eps})$ and $M(n)= \sum_{i=1}^l  h(\mc{T}_i, n/l + n^{1-\eps})$. Clearly, we may assume that $n$ is sufficiently large, and thus $\h(\mc{T}_i, k)$ is a non-decreasing function of $k$ for $k \geq n/l - n^{1-\eps}$ and every $i \in [l]$. Thus  \begin{equation}\label{e:decompose2} 2^{m(n)} \leq  \prod_{i=1}^l |\mc{T}_i^{|X_i|}| \leq 2^{M(n)}.\end{equation}

\cref{l:uniqueness2} additionally implies that $|\mc{H}(\mc{X})| = (1 - o(1))2^{h(\mc{X})}$. Combining this estimate with (\ref{e:decompose1}) and (\ref{e:decompose2}) we obtain $$(1-o(1))2^{m(n)}|\mc{H}(\mc{X})| \leq |\mc{F}(\mc{X})|\leq (1+o(1))2^{M(n)}|\mc{H}(\mc{X})|.$$ 
Summing these inequalities over all $\eps$-balanced partitions  $\mc{X}$ of $[n]$ yields (\ref{e:decompose}) by the choice of $\eps$.
\end{proof}	

Having gathered the necessary ingredients, we finish this section with the proofs of \cref{l:constellationspeed} and \cref{t:critical2}, which are restated for convenience.

\constellationspeed*

\begin{proof}
	Let $\mf{F}=(\mc{P}(\mc{J}_i))_{i \in [l]}$ be the sequence of hereditary families corresponding to $s$-star systems within the constellation $\mc{J}$. It follows directly from the definition of $\mc{P}(\mc{J})$  that $\mc{P}(\mc{J}) \subseteq \mc{P}(\mf{F})$. Conversely, it is not hard to see (and was already observed in the proof of  \cref{l:starpartition}) that there exists $\eps> 0$ such that $|\mc{P}^n(\mc{J})| \geq \eps |\mc{P}^n(\mf{F})|$ for every positive integer $n$.
	
	Thus it remains to show that 
	\begin{equation}\label{e:decstar}  \h(\mc{P}(\mf{F}),n) -  \h(\mc{H}(l),n)  =|V(J)|\log n + O(1).
	\end{equation}
	By \cref{l:starspeed} we have $$ \h(\mc{P}(\mc{J}_i)), n) = |\phi^{-1}(i)|\log n + O(1),$$
	for every $i \in [l]$. Substituting the above identities into (\ref{e:decompose}) yields (\ref{e:decstar})
 \end{proof}	

\criticaltwo*

\begin{proof}
Clearly, \cref{l:crit2} implies the final statement of \cref{t:critical2}, and together with \cref{l:constellationspeed} this statement shows that the implication  \textbf{(i) $\Rightarrow$ (ii)} holds.

\vskip 5pt \textbf{(ii) $\Rightarrow$ (iii).}  Let $\mc{F}_1,\ldots,\mc{F}_l$	be as in (iii) and let $\mc{F}_*=\mc{P}(\mc{F}_1,\ldots,\mc{F}_l)$. It suffices to show that $\h(\mc{F}_*) -  \h(\mc{H}(l)) = \Omega(n)$. Suppose that $\mc{F}_1 \not \in \{\mc{C}^{+},\mc{S}^{+}\}$. Then  $\mc{F}_1$ is meager, and by \cref{l:decomposespeed}, it suffices to show that $\h(\mc{F}_1, n)=\Omega(n)$. This follows from \cref{t:BBW}, and is also not hard to verify independently.

It remains to consider the case $\mc{F}_1 \in \{\mc{C}^{+},\mc{S}^{+}\}$. By symmetry, we may assume  $\mc{F}_1 = \mc{C}^{+}$. Let $\mc{F}_{**} = \mc{P}(\mc{C},\mc{F}_2,\ldots,\mc{F}_l)$. Then $\mc{F}_*= (\mc{F}_{**})^{+}$. It follows that $\h(\mc{F}_*,n) \geq \h(\mc{F}_{**},n-1) + n-1.$
On the other hand, by \cref{l:decomposespeed} we have  $\h(\mc{F}_{**},n-1) = \h(\mc{H}(l),n-1)+O(1)$, and as $\mc{H}(l)$ is smooth we have $\h(\mc{H}(l),n) = \h(\mc{H}(l),n-1) + \frac{l-1}{l}n + o(n)$. It follows that $\h(\mc{F}_*,n) - \h(\mc{H}(l),n)\geq n/l - o(l),$ as desired.

\vskip 5pt \textbf{(iii) $\Rightarrow$ (i).} We refine the definition of $\red(\mc{F})$ as follows. Let  $0 \leq i \leq l-1$ be an integer. We denote by $\red(\mc{F},i)$ the family of all graphs $H$ such that  $\mc{P}(\iota(H),\mc{H}(i,l-1-i)) \subseteq \mc{F}$. Clearly, $\red(\mc{F},i)$ is hereditary, and $\red(\mc{F}) = \cup_{i=0}^{l-1}\red(\mc{F},i)$. 

Suppose that (i) does not hold, that is $\mc{F}$ is non-critical. Then $\red(\mc{F})$ is non-star-like. Thus $\red(\mc{F},i)$ is non-star-like for some $0 \leq i \leq l-1$. By \cref{t:BBW} there exists $\mc{F}_1 \subseteq \red(\mc{F},i)$ such that either $\mc{F}_1$ or $\bar{\mc{F}}_1$ contains one of the families $\mc{C} \vee \mc{C}, \mc{C} \vee \mc{S}, \mc{M}$, $\mc{C}^{+}$. Let $\mc{F}_j=\mc{S}$ for $2 \leq j \leq i+1$, and let $\mc{F}_j=\mc{C}$ for $i+2 \leq j \leq l$. Then $\mc{P}(\mc{F}_1,\ldots,\mc{F}_l) \subseteq \mc{F}$ by definition of $\red(\mc{F},i)$. Therefore (iii) does not hold, as desired.
\end{proof}	  

\subsubsection*{Acknowledgement.} We are indebted to Bruce Reed for introducing us to the questions considered and to many of the techniques employed in this paper.  We thank Zachary Feng for valuable comments.

\bibliographystyle{alpha}
\bibliography{../snorin}

\end{document}